\newtheorem{theorem}{Theorem}[section]
\newtheorem{lemma}[theorem]{Lemma}
\newtheorem{proposition}[theorem]{Proposition}
\theoremstyle{definition}
\newtheorem{remark}[theorem]{Remark}
\newtheorem{definition}[theorem]{Definition}
\newcommand{\Q}{\upsilon}
\newcommand{\ind}[1]{{\text{\Large $\mathfrak 1$}}\left(#1\right)}
\newcommand{\PR}{\mathbb{P}}
\newcommand{\PQ}{\mathbb{Q}}
\newcommand{\lr}[1]{\left(#1\right)}
\newcommand{\lrb}[1]{\left[#1\right]}
\newcommand{\lrc}[1]{\left\{#1\right\}}
\newcommand{\ball}[1]{\mathcal{B}\lr{#1}}
\newcommand{\ballQ}[1]{\mathcal{B}_\Q\lr{#1}}
\newcommand{\bddi}{\partial^\mathrm{i}}
\newcommand{\bddo}{\partial^\mathrm{o}}
\newcommand{\bdde}{\partial^\mathrm{e}}
\newcommand{\FPP}{\mathrm{FPP}}
\newcommand{\core}{\mathrm{core}}
\newcommand{\super}{\mathrm{super}}
\newcommand{\neigh}{\mathrm{neigh}}
\newcommand{\neighs}{\mathrm{neigh2}}
\newcommand{\enc}{\mathrm{enc}}
\newcommand{\couter}{{\mathrm{outer}}}
\newcommand{\couters}{{\couter/3}}
\newcommand{\compl}{\textrm{c}}
\newcommand{\eff}{{\mathrm{eff}}}
\newcommand{\bad}{{\mathrm{bad}}}
\newcommand{\crit}{{\mathrm{c}}}
\newcommand{\dir}{{\mathrm{dir}}}
\newcommand{\mdla}{{\small{MDLA}}}
\newcommand{\dla}{{\small{DLA}}}
\newcommand{\fpphe}{{\small{FPPHE}}}
\newcommand{\HDFIG}[1]{#1}
\renewcommand{\phi}{\varphi}
\renewcommand{\hat}{\widehat}
\begin{document}

\title{Multi-Particle Diffusion Limited Aggregation} 

\author{Vladas Sidoravicius\thanks{Supported in part by CNPq grants 308787/2011-0 and 476756/2012-0 and FAPERJ grant E-26/102.878/2012-BBP.}\ \ and Alexandre Stauffer\thanks{Supported by a Marie Curie Career Integration Grant PCIG13-GA-2013-618588 DSRELIS, and EPSRC Early Career Fellowship EP/N004566/1.}\\ $\,$ \\{\small{Courant Institute of Mathematical Sciences, New York}} \\
{\small{NYU-ECNU Institute of Mathematical Sciences at NYU Shanghai}} \\
{\small{CEMADEN, S\~ao Jos\'e dos Campos}} \\
{\small{and}} \\{\small{University of Bath}}}
\date{}

\maketitle

\begin{abstract}
We consider a stochastic aggregation model on $\mathbb{Z}^d$. 
Start with particles distributed according to the product Bernoulli measure with parameter $\mu$. 
In addition, start with an aggregate at the origin. 
Non-aggregated particles move as continuous-time simple random walks obeying the exclusion rule, whereas aggregated particles do not move. 
The aggregate grows by attaching particles to its surface whenever a particle attempts to jump onto it. 
This evolution is called multi-particle diffusion limited aggregation.
Our main result states that if on $d>1$ the initial density of particles is large enough, 
then with positive probability the aggregate has linearly growing arms; 
that is, there exists a constant $c>0$ so that at time $t$ the aggregate contains a point of distance at least $ct$ from the origin, for all $t$.

The key conceptual element of our analysis is the introduction and study of a new growth process. 
Consider a first passage percolation process, called type 1, starting from the origin. 
Whenever type 1 is about to occupy a new vertex, with positive probability, instead of doing it, it gives rise to another first passage percolation process, 
called type 2, which starts to spread from that vertex. 
Each vertex gets occupied only by the process that arrives to it first. 
This process may have three phases: extinction (type 1 gets eventually surrounded by type 2), 
coexistence (infinite clusters of both types emerge), 
and strong survival (type 1 produces an infinite cluster which entraps all type 2 clusters). 
Understanding the various phases of this process is of mathematical interest on its own right. 
We establish the existence of a strong survival phase, and use this to show our main result.

\end{abstract}

\section{Introduction}\label{sec:intro}

In this work we consider one of the classical aggregation processes, introduced  in  \cite{RM} (see also \cite{Voss}) 
with the goal of providing an example  of ``a simple and tractable" mathematical model of dendritic growth, for which theoretical and mathematical 
concepts and tools could be designed and tested on. Almost four decades later we still encounter tremendous  mathematical
challenges studying its geometric and dynamic properties, and understanding the driving mechanism lying behind the formation 
of fractal-like structures.

{\bf Multi-particle diffusion limited aggregation (\mdla).} We consider the following stochastic aggregation  model on $\mathbb{Z}^d, \; d\ge 1$. 
Start with an infinite collection of particles located at the vertices of the lattice, with at most one particle 
per vertex, and initially distributed according to the product Bernoulli measure with parameter $\mu\in(0,1)$. 
In addition, there is an aggregate, which initially consists of
only one special particle, placed at the origin. The system
evolves in continuous time.
Non-aggregated particles move as simple symmetric random walks 
obeying the exclusion rule, i.e.\ particles jump at rate $2d$ to a uniformly random neighbor, but
if the chosen neighbor already contains another non-aggregated particle, such jump is suppressed and the particle waits for the next attempt to jump. 
Aggregated particles do not move. 
Whenever a non-aggregated particle attempts to jump on a 
vertex occupied by the aggregate, the jump of 
this particle is suppressed, the particle becomes part of the aggregate, and never moves from that moment onwards.
Thus the aggregate grows by attaching particles to its surface whenever a particle attempts to 
jump onto it.  This evolution will be referred to as {\emph{multi-particle diffusion limited 
aggregation}}, \mdla; examples for different values of $\mu$ are shown in Figure~\ref{fig:mdla}.
\begin{figure}[htbp]
   \begin{center}
      \includegraphics[width=.45\textwidth]{\HDFIG{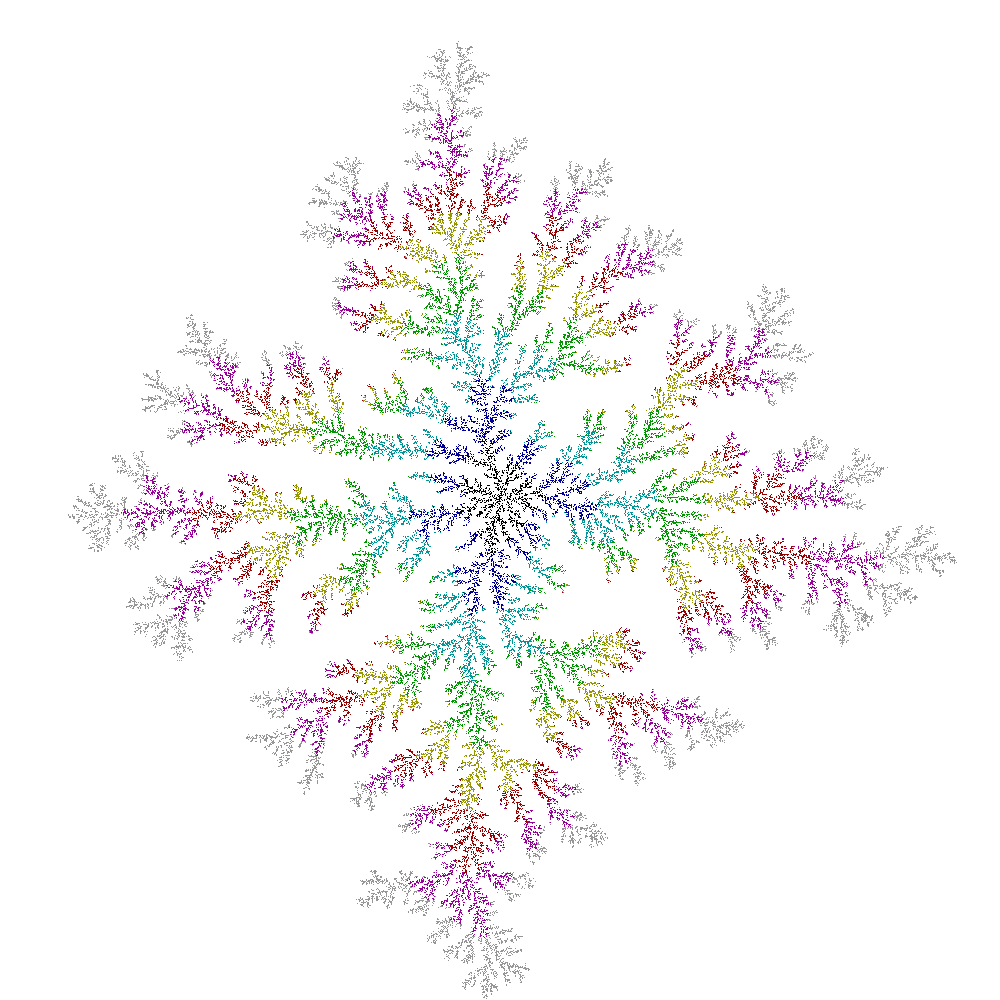}}
      \hspace{\stretch{1}}
      \includegraphics[width=.45\textwidth]{\HDFIG{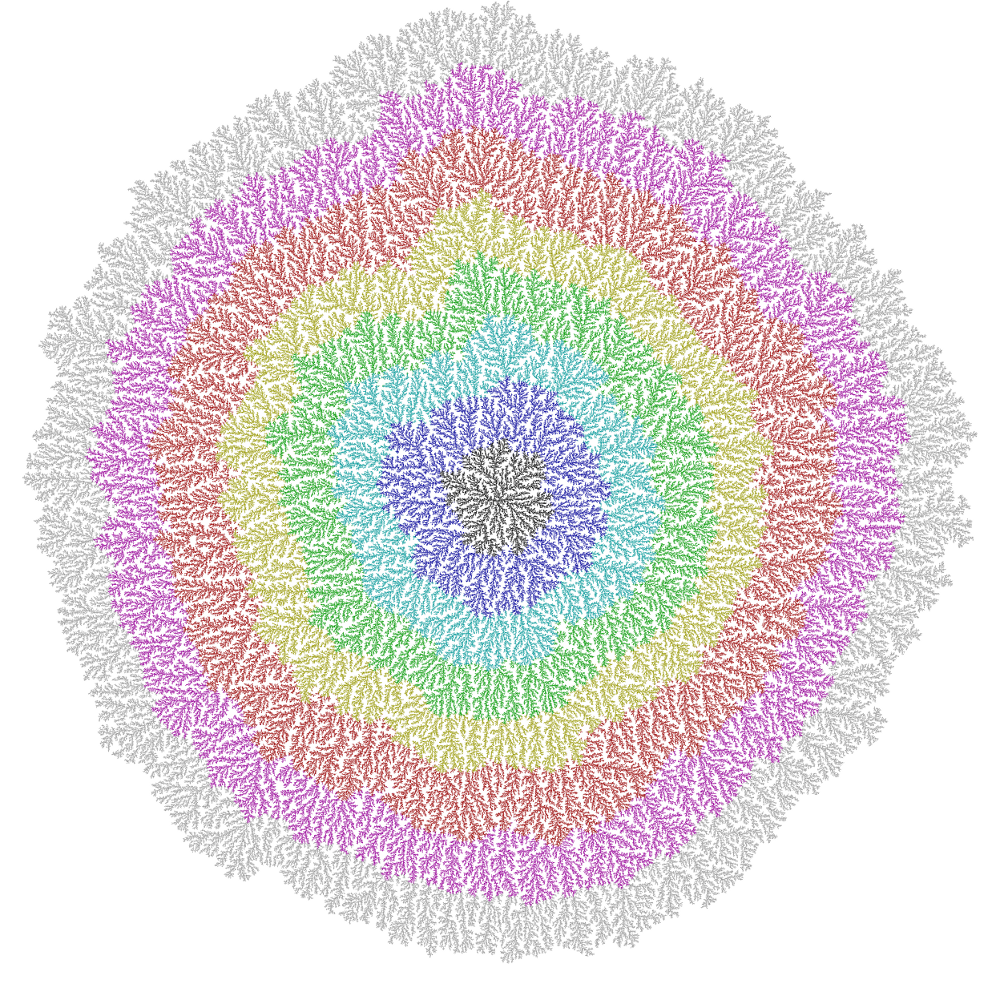}}
   \end{center}\vspace{-.5cm}
   \caption{\mdla{} with $\mu=0.1$ and $0.3$, respectively. Colors represent different epochs of the evolution of the process.}
   \label{fig:mdla}
\end{figure}

Characterizing the behavior of \mdla{} is a widely open and challenging problem. Existing mathematical results 
are limited to one dimension~\cite{KS_dla,CSwindle}. In this case, it is known that
the aggregate has almost surely \emph{sublinear} growth for any $\mu\in(0,1)$, having size of order $\sqrt{t}$ by time $t$. 
The main obstacle preventing the aggregate to grow with positive speed is 
that, from the point of view of the front (i.e., the rightmost point) of the aggregate, the density of particles decreases
since the aggregate grows by forming a region of density 1, larger than the initial density of particles.

In dimensions two and higher, \mdla{} seems to present a much richer and complex behavior, which changes substantially depending on the value of $\mu$;
refer to Figure~\ref{fig:mdla}. 
For small values of $\mu$, the low density of particles affects the rate of growth of the aggregate, as it needs to wait particles 
that move diffusively to find their way to its boundary. This suggests that the growth of the aggregate at small scales is governed by evolution of the 
``local'' harmonic measure of its boundary. This causes the aggregate to grow by protruding long fractal-like arms, similar to dendrites. 
On the other hand, when $\mu$ is large enough, the situation appears to be different. In this case, the aggregate is immersed in a very dense cloud of 
particles, and its growth follows random, dynamically evolving geodesics that deviate from 
occasional regions without particles.
Instead of showing dendritic type of growth, the aggregate forms a dense region characterized by the appearance of a limiting shape, 
similar to a first passage percolation process~\cite{Richardson,CoxDurrett}.
These two regimes do not seem to be exclusive. For intermediate values of $\mu$, the process shows 
the appearance of a limiting shape at macroscopic scales, while 
zooming in to mesoscopic and microscopic scales reveals rather complex ramified structures similar to dendritic growth, as in Figure~\ref{fig:mdla2}.

\begin{figure}[htbp]
   \begin{center}
      \includegraphics[width=.8\textwidth]{\HDFIG{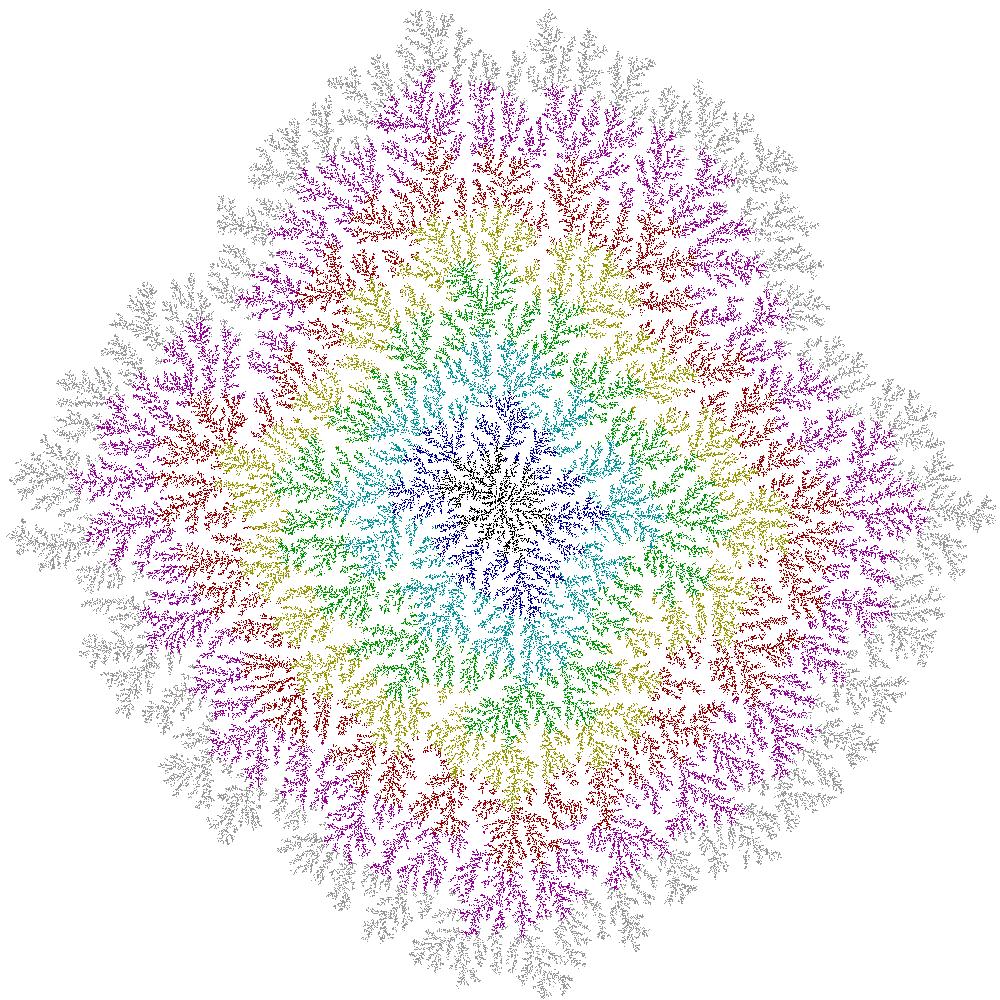}}
   \end{center}\vspace{-.5cm}
   \caption{\mdla{} with $\mu=0.2$, showing a limiting shape at macroscopic scales, but rather complex ramified structures at mesoscopic and microscopic scales.}
   \label{fig:mdla2}
\end{figure}

The main result of this paper is to establish that, unlike in dimension one, in dimensions $d\geq 2$ \mdla{} has a phase of \emph{linear} growth.
We actually prove a stronger result, showing that the aggregate grows with positive speed in \emph{all} directions.
For $t\geq0$, let $\mathcal{A}_t\subset\mathbb{Z}^d$ be the set of vertices occupied by the aggregate by time $t$, 
and let $\bar {\mathcal{A}}_t\supseteq \mathcal{A}_t$ be the set of vertices 
of $\mathbb{Z}^d$ that are not 
contained in the infinite component of $\mathbb{Z}^d\setminus \mathcal{A}_t$. 
Note that $\bar {\mathcal{A}}_t$ comprises all vertices of $\mathbb{Z}^d$ that either belong to the aggregate or are separated from infinity by the aggregate.
For $x\in\mathbb{Z}^d$ and $r\in\mathbb{R}_+$, we denote by $B(x,r)$ the ball of radius 
$r$ centered at $x$.
\begin{theorem}\label{thm:mdla}
   There exists $\mu_0\in(0,1)$ such that, for all $\mu>\mu_0$, 
   there are positive constants $c_1=c_1(\mu,d)$ and $c_2=c_2(\mu,d)$ for which
   $$
     \mathbb{P}\lr{\bar {\mathcal{A}}_t \supset B(0,c_1 t) \text{ for all $t\geq 0$}}>c_2.
   $$
\end{theorem}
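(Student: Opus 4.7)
The plan is to reduce Theorem~\ref{thm:mdla} to the strong-survival phase of the \fpphe{} process described in the introduction, via an explicit coupling that dominates the MDLA aggregate from below by the ``enclosed'' region of the type~1 cluster. The first ingredient is a joint construction of the initial randomness: each vertex $x\in\mathbb{Z}^d$ carries an independent Bernoulli$(\mu)$ mark for the presence of a particle, together with exponential clocks for jumps and auxiliary i.i.d.\ variables that will later play the role of the passage times in the two-type process. Empty sites of the initial configuration are the natural candidates to become ``seeds'' for the type~2 process, so the density $\mu$ of \mdla{} translates into the birth probability of type~2; taking $\mu$ close to $1$ then drives that parameter into the strong-survival regime.

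The core of the proof is a dynamic coupling between the two processes. Whenever the frontier of the aggregate advances into a vertex that was initially occupied by a particle, we want to match this with a type~1 step along the corresponding edge; whenever the aggregate must advance into a hole, the delay incurred while exclusion random walks transport a particle into that hole plays the role of a type~2 passage time. To make this precise I would work on a mesoscopic scale: partition space into blocks of side length $L$ and show that for $\mu$ close enough to $1$, each block is either good, in which case the aggregate traverses it in time $O(L)$ with overwhelming probability, or bad, in which case its effect on the aggregate is stochastically dominated by a type~2 FPP cluster started at a single seed. This yields a renormalized process on the block lattice that is stochastically dominated by \fpphe{} with small birth probability for type~2.

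With the coupling in place, we invoke the strong-survival result established elsewhere in the paper: with positive probability the type~1 cluster is infinite, grows linearly, and surrounds every type~2 pocket it meets, so that the region of $\mathbb{Z}^d$ separated from infinity by type~1 contains $B(0,c_1 t)$ for all $t$ sufficiently large. Under the coupling, any vertex separated from infinity by type~1 is separated from infinity by the aggregate, and any vertex reached by type~1 is eventually reached by the aggregate. Absorbing the finite transient (before the linear-growth regime takes over) into the constant $c_1$ and the initial-configuration event into $c_2$, we obtain $\bar{\mathcal{A}}_t \supseteq B(0,c_1 t)$ for all $t\geq 0$ with positive probability, which is exactly the statement.

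The main obstacle, by a wide margin, is making the coupling honest. \mdla{} is not monotone in its initial configuration, and close to the aggregate the exclusion process has a moving absorbing boundary, so one cannot simply read off type~1/type~2 labels from the initial marks. A multi-scale analysis is needed to control three intertwined issues: (i) showing that in dense regions the effective delay seen by the aggregate is of the same order as the Euclidean distance traveled, so that the comparison with an FPP passage time is legitimate; (ii) controlling the ``leakage'' of emptiness, i.e.\ preventing sparse regions from propagating outward through the exclusion dynamics and spoiling the coupling on larger scales; and (iii) ensuring that the coupling holds simultaneously in every direction, as the enclosure statement $\bar{\mathcal{A}}_t\supseteq B(0,c_1 t)$ requires global rather than directional linear growth. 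I expect the bulk of the technical work to lie in this renormalization, rather than in the analysis of \fpphe{} itself.
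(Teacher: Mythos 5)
Your overall reduction is the right one and matches the paper's: set $p=1-\mu$ so that empty sites become type~2 seeds, couple MDLA with \fpphe{} so that the aggregate dominates $\eta^1$ and the holes stay inside $\eta^2$, and then invoke Theorem~\ref{thm:fpp}. You also correctly flag the real obstruction — holes near the aggregate are driven by a non-equilibrium exclusion dynamics with a moving absorbing boundary, so a naive comparison with rate-$\lambda$ first passage percolation is not automatic.

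However, the mechanism you propose for the coupling is not the one in the paper, and as written it has a genuine gap. You suggest a mesoscopic renormalization of MDLA into $L$-blocks, with good blocks traversed in time $O(L)$ and bad blocks ``stochastically dominated by a type~2 FPP cluster.'' You do not say how one would actually establish that domination for a bad block, and that is precisely the hard part — if you could control the holes' spread by an FPP of some rate $\lambda<1$ at the block level, you would already have solved the problem; the block structure does not help with the drift issue, it just postpones it. In fact the paper avoids any block renormalization in this step entirely (the multi-scale work lives inside the proof of Theorem~\ref{thm:fpp}, not in the MDLA$\to$\fpphe{} coupling). The paper's coupling is fully microscopic, and the key idea you are missing is the \emph{backtracking jump}: whenever a hole jumps from $x$ to $y$, it has probability at least $\frac{1}{4d-1}$ of jumping straight back to $x$ before any of the $4d-1$ other edges incident to $\{x,y\}$ ring. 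This holds \emph{regardless} of the drift the hole feels from the aggregate, and it yields, via the identities $Y+\1(X=1)Z\sim\Exp(1)$ for a geometric-length decomposition and the fact that a geometric sum of $\Exp(1)$'s is $\Exp(\lambda)$, that the effective passage time through each edge used by a hole stochastically dominates $\Exp(\lambda)$ with $\lambda=1-\frac{1}{4d-1}$. That is what makes the comparison with \fpphe{} legitimate edge by edge, and what selects the specific $\lambda$ fed into Theorem~\ref{thm:fpp}. Without this (or a substitute with the same effect), your coupling is a sketch of a plan rather than a proof.

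One further detail worth noting: the paper does not mark each vertex with a Bernoulli$(\mu)$ variable only. It assigns each vertex an independent geometric value ($i$ with probability $(1-\mu)^i\mu$), because the aggregate may try to occupy the same vertex several times, discovering a new hole each time until the geometric count is exhausted. With a single Bernoulli mark the construction of MDLA from the exploration process does not match the original dynamics.
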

\begin{remark}~\label{rmk:mdla}
   It is not difficult to see that the aggregate cannot grow faster than linearly. That is, there exists a constant $c_3$ such that the probability that 
   $\bar {\mathcal{A}}_t \subset B(0,c_3 t)$ for all $t\geq t_0$ goes to 1 with $t_0$. This is the case because the growth of the aggregate is slower than the growth 
   of a first passage percolation with exponential passage times of rate $1$, which has linear growth; see, for example,~\cite{Kingman,ADH}. 
\end{remark}

We believe that Theorem~\ref{thm:mdla} holds in a stronger form, 
with $\mathbb{P}\lr{\bar {\mathcal{A}}_t \supset B(0,c_1 t) \text{ for all $t\geq t_0$}}$ going to $1$ with $t_0$.
However, with positive probability, it happens that there is no particle within a large distance
to the origin at time 0. In this case, in the initial stages of the process, the aggregate will grow very slowly as if in a system with a small density of particles. 
We expect that the density of particles near the boundary of the aggregate will become close to $\mu$ after particles have moved for a large enough time, 
allowing the aggregate to start having positive speed of growth.
However, particles perform a non-equilibrium dynamics due to their interaction with the aggregate,
and the behavior and the effect of this initial stage of low density is not yet understood mathematically.
This is related to the problem of describing the behavior of \mdla{} for small values of $\mu$, which is still far from reach, and raises the challenging question 
of whether the aggregate has positive speed of growth for any $\mu>0$. 
Even in a heuristic level, it is not at all clear 
what the behavior of the aggregate should be for small $\mu$. 
On the one hand, the low density of particles causes the aggregate to grow slowly since particles move diffusively 
until they are aggregated. 
On the other hand, 
since the aggregate is immersed in a dense cloud of particles, 
this effect of slow growth could be restricted to small scales only, because
at very large scales the aggregate could simultaneously grow in many different directions. 

We now describe the ideas of the proof of Theorem~\ref{thm:mdla}.
For this we use the language of the dual representation of the exclusion process, where
vertices without particles are regarded as hosting another type of particles, called \emph{holes}, which perform among themselves
simple symmetric random walks obeying the exclusion rule.
When $\mu$ is large enough, at the initial stages of the process, the aggregate grows without encountering any hole. 
The growth of the aggregate is then equivalent to a first passage percolation process 
with independent exponential passage times. 
This stage is well understood: it is known that first passage percolation not only grows with positive speed, but also has a limiting shape~\cite{Richardson,CoxDurrett}. 
However, at some moment, the aggregate will start encountering holes. 
We can regard the aggregate as a solid wall for holes, as they can neither jump onto the aggregate nor be attached to the aggregate. 
In one dimension, holes end up accummulating at the boundary of the aggregate, and this is enough to prevent positive speed of growth. 
The situation is different in dimensions $d\geq 2$, 
since the aggregate is able to deviate from any hole it encounters, 
advancing through the particles that lie 
in the neighborhood of the hole until
it completely surrounds and entraps the hole. 
The problem is that the aggregate will find regions of holes of arbitrarily large sizes, which require a long time for the aggregate to go around them.
When $\mu$ is large enough, the regions of holes will be typically well spaced out, 
giving sufficient room for the aggregate to grow in-between the holes. 
One needs to show that the delays caused by deviation from holes are not large enough to prevent positive speed.
A challenge is that as
holes cannot jump onto the aggregate, their motion gets a drift whenever they are neighboring the aggregate. 
Hence, holes move according to a non-equilibrium dynamics, which creates difficulties in controlling the location of the holes. 
In order to overcome this problem, we introduce a new process to model the interplay between 
the aggregate and holes.

{\bf First passage percolation in a hostile environment (\fpphe).}
This is a two-type first passage percolation process.
At any time $t\geq0$, let $\eta^1(t)$ and $\eta^2(t)$ denote the vertices of $\mathbb{Z}^d$ occupied by type~1 and type~2, respectively.
We start with $\eta^1(0)$ containing only the origin of $\mathbb{Z}^d$, and $\eta^2(0)$ being a random set obtained by selecting each vertex of 
$\mathbb{Z}^d\setminus\{0\}$ with probability $p\in(0,1)$, independently of one another.
Both type~1 and type~2 are growing processes; i.e., for any times $t<t'$ we have $\eta^1(t)\subseteq\eta^1(t')$ and $\eta^2(t)\subseteq\eta^2(t')$.
Type~1 spreads from time $0$ throughout $\mathbb{Z}^d$ at rate $1$. Type~2 does not spread at first, and we denote $\eta^2(0)$ as \emph{type~2 seeds}.
Whenever the type~1 process attempts to occupy a vertex hosting a type~2 seed, the occupation is suppressed and that type~2 seed is \emph{activated} and starts to 
spread throughout $\mathbb{Z}^d$ at rate $\lambda\in(0,1)$. The other type~2 seeds remain inactive until type~1 or already activated type~2 attempts to occupy their location.
A vertex of the lattice is only occupied by the type that arrives to it first, so $\eta^1(t)$ and $\eta^2(t)$ are disjoint sets for all $t$;
this causes the two types to compete with each other for space. 
Note that type~2 spreads with smaller rate than type~1, but type~2 starts with a density of seeds while type~1 starts only from a single location.

We show that it is possible to analyze \mdla{} via a coupling with this process by showing that a hole that has been in contact 
with the aggregate will remain contained inside a cluster of type~2. Since the aggregate grows in the same way as type~1, 
establishing that the type~1 process grows with positive speed allows us to show that \mdla{} has linear growth.
Besides its application to studying \mdla{}, we believe that \fpphe{}
is an interesting process to analyze on its own right, as it shows fascinating different phases of behavior depending on the choice of $p$ and $\lambda$. 
An illustration of the behavior of this process is shown in Figure~\ref{fig:speed07}.
\begin{figure}[tbp]
   \begin{center}
      \includegraphics[width=.32\textwidth]{\HDFIG{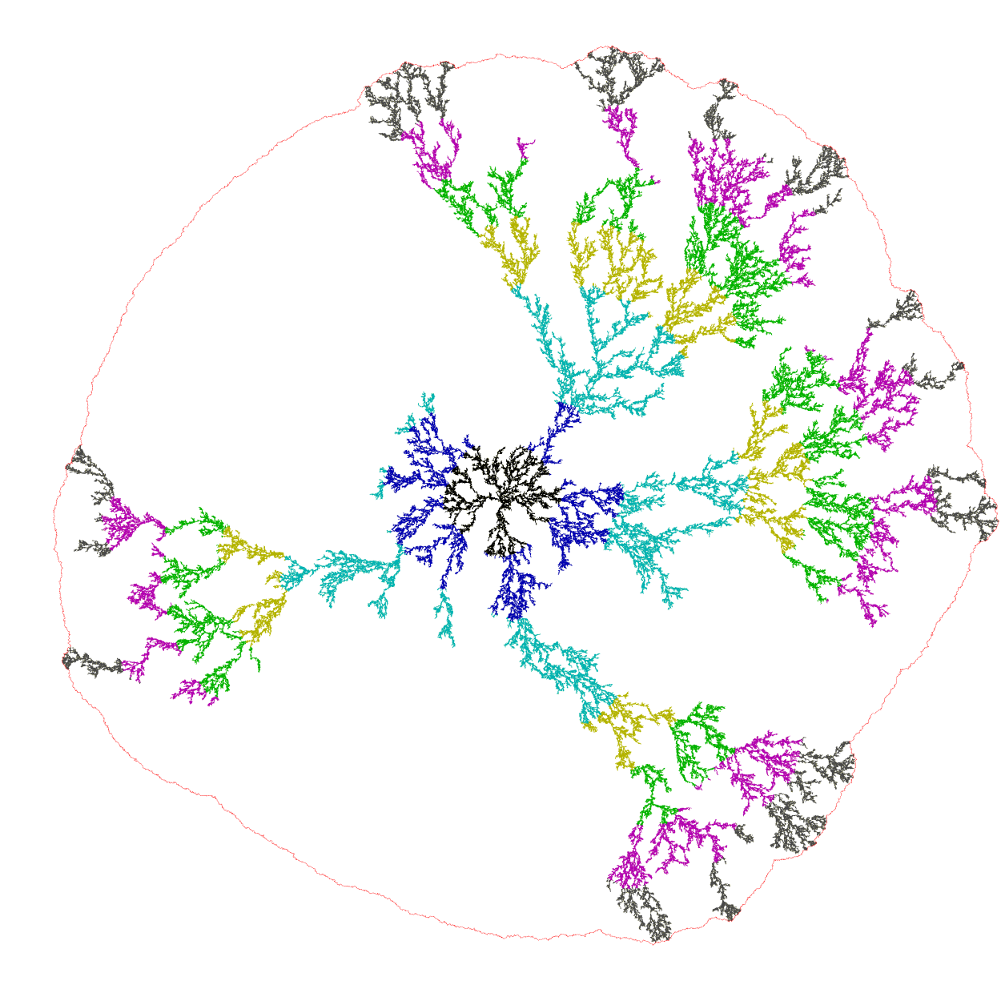}}
      \hspace{\stretch{1}}
      \includegraphics[width=.32\textwidth]{\HDFIG{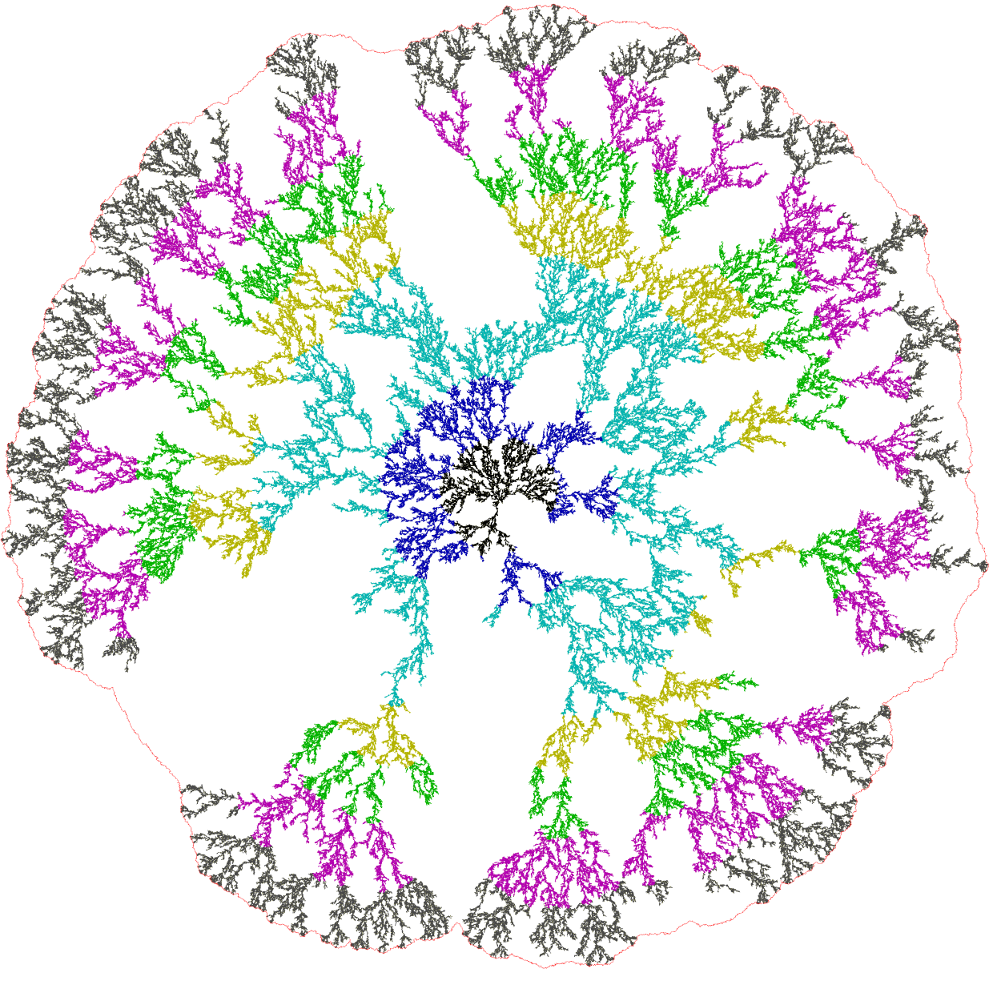}}
      \hspace{\stretch{1}}
      \includegraphics[width=.32\textwidth]{\HDFIG{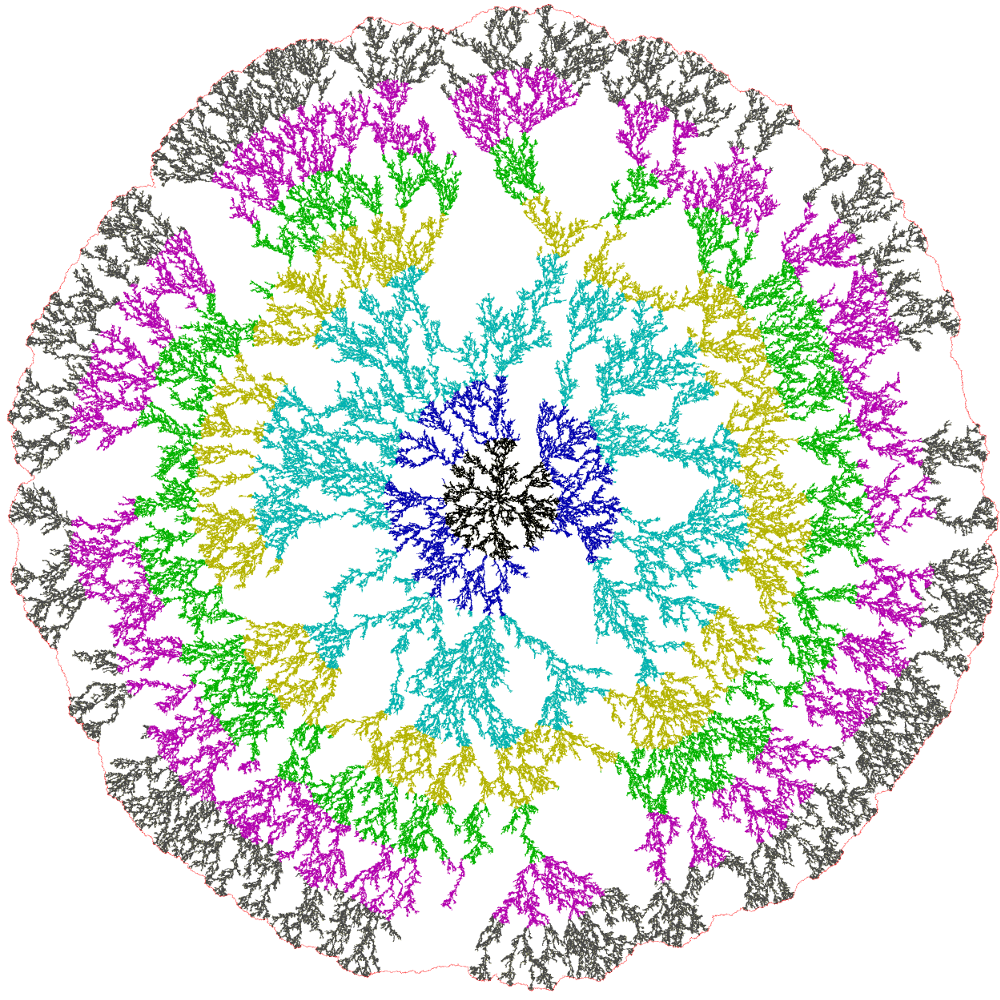}}
   \end{center}\vspace{-.5cm}
   \caption{\fpphe{} with $\lambda=0.7$ and $p=0.030, 0.029$ and $0.027$, respectively. Colors represent different epochs of 
   the growth of $\eta^1$, while the thin curve at the boundary represents the boundary between $\eta^2$ and vertices that are either unoccupied or host an inactive type~2 seed. 
   The whole white region within this boundary 
   is occupied by activated type~2.}
   \label{fig:speed07}
\end{figure}

The first phase is the \emph{extinction phase}, 
where type~1 stops growing in finite time with probability~$1$. 
This occurs, for example, when $p> 1-p_\crit$, with $p_\crit=p_\crit(d)$ being the critical probability for independent site percolation on $\mathbb{Z}^d$. 
In this case, with probability $1$, the origin is contained in a finite cluster of vertices not occupied by type~2 seeds, and hence type~1 will eventually 
stop growing. 
This extinction phase for type~1 also arises when $p\leq 1-p_\crit$ but $\lambda$ is close enough to $1$ so that type~2 clusters grow quickly enough to surround type~1 and confine it to a finite set.
 
We show in this work that another phase exists, called the \emph{strong survival phase}, and which is characterized by a positive probability of appearance 
of an infinite cluster of type~1, while type~2 is confined to form only finite clusters. 
Note that type~1 cannot form an infinite cluster with probability 1, 
since with positive probability all neighbors of the origin contain seeds of type~2.
Unlike the extinction phase, whose existence is quite trivial to show, 
the existence of a strong survival phase for some value of $p$ and $\lambda$ is far from
obvious.
Here we not only establish the existence of this phase, but we show that such a phase exists for \emph{any} $\lambda<1$ provided that $p$ is small enough. We also 
show that type~1 has positive speed of growth.
For any $t$, we define $\bar\eta^1(t)$ as the set of vertices of $\mathbb{Z}^d$ that are not contained in the infinite component of $\mathbb{Z}^d\setminus \eta^1(t)$, 
which comprises $\eta^1(t)$ and all vertices of $\mathbb{Z}^d\setminus \eta^1(t)$ that are separated from infinity by $\eta^1(t)$.
The theorem below will be proved in Section~\ref{sec:proof}, as a consequence of a more general theorem, Theorem~\ref{thm:fpp2}.
\begin{theorem}\label{thm:fpp}
   For any $\lambda<1$, there exists a value $p_0\in(0,1)$ such that, for all $p\in(0,p_0)$, there are positive constants 
   $c_1=c_1(p,d)$ and $c_2=c_2(p,d)$ for which
   $$
      \PR\lr{\bar\eta^1(t) \supseteq B(0,c_1t) \text{ for all $t\geq0$}}>c_2.
   $$
\end{theorem}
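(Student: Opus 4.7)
The plan is to prove Theorem~\ref{thm:fpp} by a multi-scale renormalization argument that exploits the strict inequality $\lambda < 1$. The base observation is that in the absence of type~2 seeds, $\eta^1$ evolves as classical first-passage percolation with i.i.d.\ exponential rate-$1$ passage times on edges, so by Richardson's shape theorem it grows at positive limiting speed $v>0$ with a deterministic asymptotic shape. The whole difficulty therefore lies in showing that, for $p$ small, activated type~2 clusters cause only a small multiplicative slowdown in $\eta^1$ and are always eventually engulfed, so that $\bar\eta^1(t)$ contains a ball of linear radius in every direction from the origin.

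I would set up a hierarchy of length scales $L_k = L_0 \ell^k$ with $L_0$ and $\ell$ large but fixed, partitioning $\mathbb{Z}^d$ into boxes of side $L_k$ at each level. A scale-$0$ box is declared \emph{good} if it contains no type~2 seed, an event of probability at least $1 - p L_0^d$, which can be made close to $1$ by taking $p$ sufficiently small. For $k\geq 1$ a scale-$k$ box is good if every connected cluster of bad scale-$(k-1)$ sub-boxes it contains has diameter much smaller than $L_k$ and is separated from the other such clusters by thick shells of good sub-boxes. The central quantitative object is a \emph{traversal lemma}: if a box is good at scale $k$ and $\eta^1$ first reaches one face of it at time $t$, then $\bar\eta^1$ has surrounded the whole box by time $t + (1+\delta_k)L_k/v$, where the slack $\delta_k$ can be kept below a fixed $\delta^\ast$ satisfying $1/(1+\delta^\ast) > \lambda$.

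The inductive step is the core of the argument. Given a cluster $\mathcal{C}$ of bad scale-$(k-1)$ sub-boxes of diameter $D \ll L_k$ inside a scale-$k$ box, observe that the type~2 seeds activated from within $\mathcal{C}$ spread as Richardson FPP with rate $\lambda$, so their union at time $t$ is contained in a $\lambda v (t-t_0)+O(D)$ neighborhood of $\mathcal{C}$, where $t_0$ is the first time $\eta^1$ reached $\mathcal{C}$. By the scale-$(k-1)$ traversal lemma, $\eta^1$ advances through the good sub-boxes surrounding $\mathcal{C}$ at effective speed at least $v/(1+\delta_{k-1}) > \lambda v$, so $\eta^1$ overtakes the growing type~2 front, closes around $\mathcal{C}$, and traps it in a finite region. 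A Peierls-type count on the renormalized lattice bounds the probability that a scale-$k$ box fails to be good by a quantity super-exponentially small in $\ell$, feeding the induction with $\delta_k - \delta_{k-1}$ summable across $k$.

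Assembling the multi-scale structure and using standard Borel--Cantelli estimates across scales, one shows that with positive probability the origin lies simultaneously in an infinite good cluster at every scale. On this event, applying the traversal lemma along every radial direction yields $\bar\eta^1(t) \supseteq B(0, c_1 t)$ for all $t \geq 0$ with $c_1$ slightly below $v/(1+\delta^\ast)$, which is the desired conclusion. The main obstacle I anticipate is the dependence structure: activation times of type~2 seeds are determined by the (random) trajectory of $\eta^1$, which itself depends on the seed configuration, so the events defining good boxes at different scales are not independent. Handling this requires a carefully sequentialized revelation of the environment — revealing seeds only as $\eta^1$ explores them — and a traversal lemma robust to the worst exploration order. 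Making this compatible with the stronger requirement that $\eta^1$ actually \emph{surrounds} each encountered type~2 cluster, rather than merely circumventing it, is where I would expect the bulk of the technical work to concentrate.
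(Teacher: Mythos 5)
Your high-level strategy — multi-scale renormalization with good boxes, a quantitative traversal/encapsulation statement at each scale, a speed hierarchy $\delta_k$ summable so that the effective type~1 speed stays strictly above $\lambda v$, and a Peierls count to conclude — matches the paper's architecture quite closely (scales $L_k$, encapsulation via a refined H\"aggstr\"om--Pemantle proposition, contagious/infected sets, multi-scale paths). You also correctly identify the central obstacle: the environment is revealed dynamically by $\eta^1$, so good-box events at different locations and scales are not obviously independent, and the encapsulation must be \emph{topological} (surrounding) rather than merely kinetic (outrunning).

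That said, there is a genuine gap at the heart of the inductive step. You assert that if $\eta^1$ reaches a good scale-$k$ box at time $t$ then it surrounds it by time $t+(1+\delta_k)L_k/v$ ``by the scale-$(k-1)$ traversal lemma,'' but this is precisely where the difficulty is hidden, and it is not addressed. Two specific issues. \textbf{(a) Detour cost.} When $\eta^1$ encapsulates a scale-$(k-1)$ bad cluster $\mathcal{C}$, it must actually detour around the (growing) type~2 region, and at the time $\eta^1$ first touches $\mathcal{C}$ its front is not a nice convex set but has already been deformed by earlier detours; you cannot simply invoke the Richardson shape theorem for FPP starting from a face of the box. The paper resolves this by coupling the real $\eta^1$ with an auxiliary FPP whose passage times are inflated by a factor $e^{\epsilon_k}$ (the event $G_k^1$), so that the auxiliary process is provably slower than $\eta^1$ even after detours; the $\epsilon_k$'s are summable, keeping the cumulative slowdown bounded. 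Without some such comparison your ``effective speed $v/(1+\delta_{k-1})$'' claim is circular. \textbf{(b) Speedup of type~2 from absorption.} You model the activated type~2 growth as ``Richardson FPP at rate $\lambda$ from $\mathcal{C}$,'' but in fact $\eta^2$ spreading from $\mathcal{C}$ can coalesce with $\eta^2$ already present from other, previously activated or not-yet-fully-encapsulated seeds, which gives it a head start that is \emph{not} captured by rate-$\lambda$ FPP started at the time $\eta^1$ reaches $\mathcal{C}$. The paper controls this with a dual coupling ($G_k^2$): $\eta^2$ is dominated by an auxiliary FPP with passage times deflated by $e^{\epsilon_k}$. Both couplings together keep the ratio of effective rates bounded below $1$, and this two-sided rate bookkeeping is an essential ingredient that your sketch omits. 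Finally, the measurability/independence issue you flag is handled in the paper not by ``sequentialized revelation'' in the dynamic sense you suggest, but by designing the good-box events to be measurable with respect to passage times in a fixed, bounded, scale-$k$ region $Q_k^{\couter}(i)$, so that disjoint regions give independence outright; making your good-box definition have this property is a concrete piece of work your proposal leaves unspecified.
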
 

There is a third possible regime, 
which we call the \emph{coexistence phase}, and is characterized by 
type~1 and type~2 simultaneously forming infinite clusters with positive probability. 
(We regard the coexistence phase as a regime of weak survival for type~1, in the sense that type~1 survives but leaves enough space for type~2 to produce 
at least one infinite cluster.)
Whether this regime actually occurs for some value of $p$ and $\lambda$ is an open problem, and even simulations do not seem to give 
good evidence of the existence of this regime. 
For example, in the rightmost picture of Figure~\ref{fig:speed07}, 
we observe a regime where $\eta^1$ survives, while $\eta^2$ seems to produce only finite clusters, but of quite long sizes. 
This also seems to be the behavior of the central picture in Figure~\ref{fig:speed07}, though it is not as clear whether each cluster of $\eta^2$ will be
eventually confined to a finite set.
However, the behavior in the leftmost picture of Figure~\ref{fig:speed07} is not at all clear. 
The cluster of $\eta^1$ has survived until the simulation was stopped, but produced a very thin set. 
It is not clear whether coexistence will happen in this situation, whether $\eta^1$ will eventually stop growing, or even whether 
after a much longer time the ``arms'' produced by $\eta^1$ will eventually find one another, 
constraining $\eta^2$ to produce only finite clusters. 

Establishing whether a coexistence phase exists for some value of $p$ and $\lambda$ is an 
interesting open problem.
We can establish that a coexistence phase occurs in a particular example of \fpphe{}, where 
type~1 and type~2 have deterministic passage times, with all randomness coming from the locations of the seeds.
In this example, all three phases occur. 
We discuss this in Section~\ref{sec:det}.
See also the recent paper~\cite{CSFPPHE}, where coexistence is established when $\mathbb{Z}^d$ is replaced by a hyperbolic non-amenable graph.

{\bf Historical remarks and related works.} \mdla{} belongs to a class of models, introduced firstly in the physics and chemistry literature
(see~\cite{Kassner} and references therein), 
and later in the mathematics literature as well, 
with the goal of studying geometric and dynamic  properties of static formations produced by aggregating randomly moving 
colloidal particles. 
Some numerically established quantities, such as fractal dimension, showed striking similarities 
between clusters produced by aggregating particles and
clusters produced in other growth processes of entirely different nature, such as dielectric breakdown cascades and Laplacian growth
models (in particular, Hele-Shaw cell~\cite{ST}). These similarities were further investigated by the introduction of 
the Hastings-Levitov growth model~\cite{HL}, which is represented as a sequence of 
conformal mappings. 
Nonetheless, it is still debated in the physics literature whether some of these models belong to the same universality class or not~\cite{Procaccia}. 
 
In the mathematics literature, 
the diffusion limited aggregation model (\dla), introduced in~\cite{WS} following the introduction of \mdla{} in~\cite{RM}, 
became a paradigm object of study among aggregation models driven by diffusive particles. 
However, 
progress on understanding \dla{} and \mdla{} mathematically has been relatively modest. 
The main results known about \dla{} are bounds on its rate of growth, derived by Kesten~\cite{Kesten1,Kesten2} (see also~\cite{Barlow}), but 
several variants have been introduced and studied~\cite{BPP,BY,Eldan,Martineau,CM,Silvestri}.
Regarding \mdla{}, it was rigorously studied only in the one-dimensional case~\cite{CSwindle,KS_dla,KS_dla2}, for which 
sublinear growth has been proved for all densities $p\in(0,1)$ in~\cite{KS_dla}.

{\bf Structure of the paper.} 
We start in Section~\ref{sec:det} with a discussion of an example of \fpphe{} where the passage times 
are deterministic, and show that this process has a coexistence phase. 
Then, in preparation for the proof of strong survival of \fpphe{} (Theorem~\ref{thm:fpp}),
we state in Section~\ref{sec:prelim} existing results on first passage percolation, 
and discuss in Section~\ref{sec:hp} a result due to H\"aggstrom and Pemantle regarding non-coexistence of a two-type 
first passage percolation process. This result plays a fundamental role in our analysis of \fpphe{}. 
Then, in Section~\ref{sec:proof}, we state and prove Theorem~\ref{thm:fpp2}, which is a more general version of Theorem~\ref{thm:fpp}. 
In Section~\ref{sec:mdla} we relate \fpphe{} with \mdla, giving the proof of Theorem~\ref{thm:mdla}.

\section{Example of coexistence in \fpphe{}}\label{sec:det}
In this section we consider \fpphe{} with \emph{deterministic} passage times. 
That is, whenever type~1 (resp., type~2) occupies a vertex $x\in\mathbb{Z}^d$, then after time $1$ (resp., $1/\lambda$) 
type~1 (resp., type~2) will occupy all unoccupied neighbors of $x$. If both type~1 and type~2 try to occupy a vertex at the same time, 
we choose one of them uniformly at random.
Recall that we denote by $\eta^i(t)$, $i\in\{1,2\}$, the set of vertices occupied by type $i$ by time $t$.
For simplicity, we restrict this discussion to dimension $d=2$.

\begin{figure}[htbp]
   \begin{center}
      \includegraphics[width=.32\textwidth]{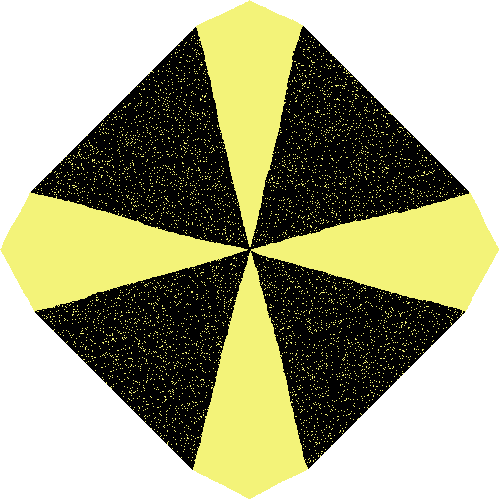}
      \hspace{\stretch{1}}
      \includegraphics[width=.32\textwidth]{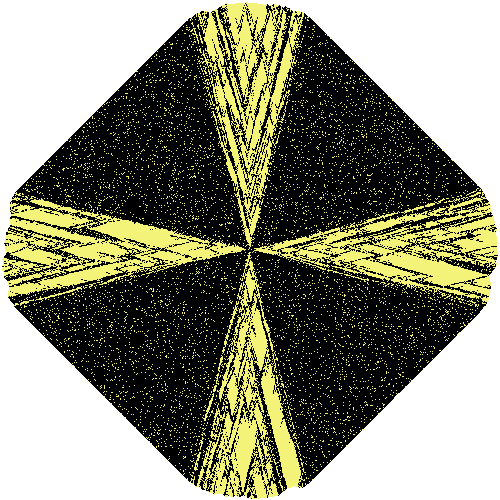}
      \hspace{\stretch{1}}
      \includegraphics[width=.32\textwidth]{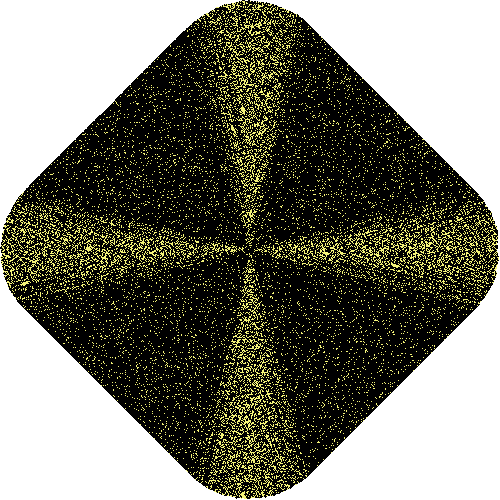}
   \end{center}\vspace{-.5cm}
   \caption{Simulation of \fpphe{} with deterministic passage times, and parameters $p=0.2$ and $\lambda=0.9, 0.8$ and $0.7$, respectively. 
      Black vertices are occupied by $\eta^1$ and yellow vertices are occupied by $\eta^2$.}
   \label{fig:speed09d}
\end{figure}
Figure~\ref{fig:speed09d} shows a simulation of this process for $p=0.2$ and different values of $\lambda$.
In all the three pictures in Figure~\ref{fig:speed09d}, $\eta^1$ seems to survive. 
However, note that the leftmost picture in 
Figure~\ref{fig:speed09d} differs from the other two since $\eta^2$ also seems to give rise to an infinite cluster, characterizing a regime of 
coexistence. See Figure~\ref{fig:detspeed09d} for more details.
\begin{figure}[htbp]
      \hspace{\stretch{1}}
      \includegraphics[width=.35\textwidth]{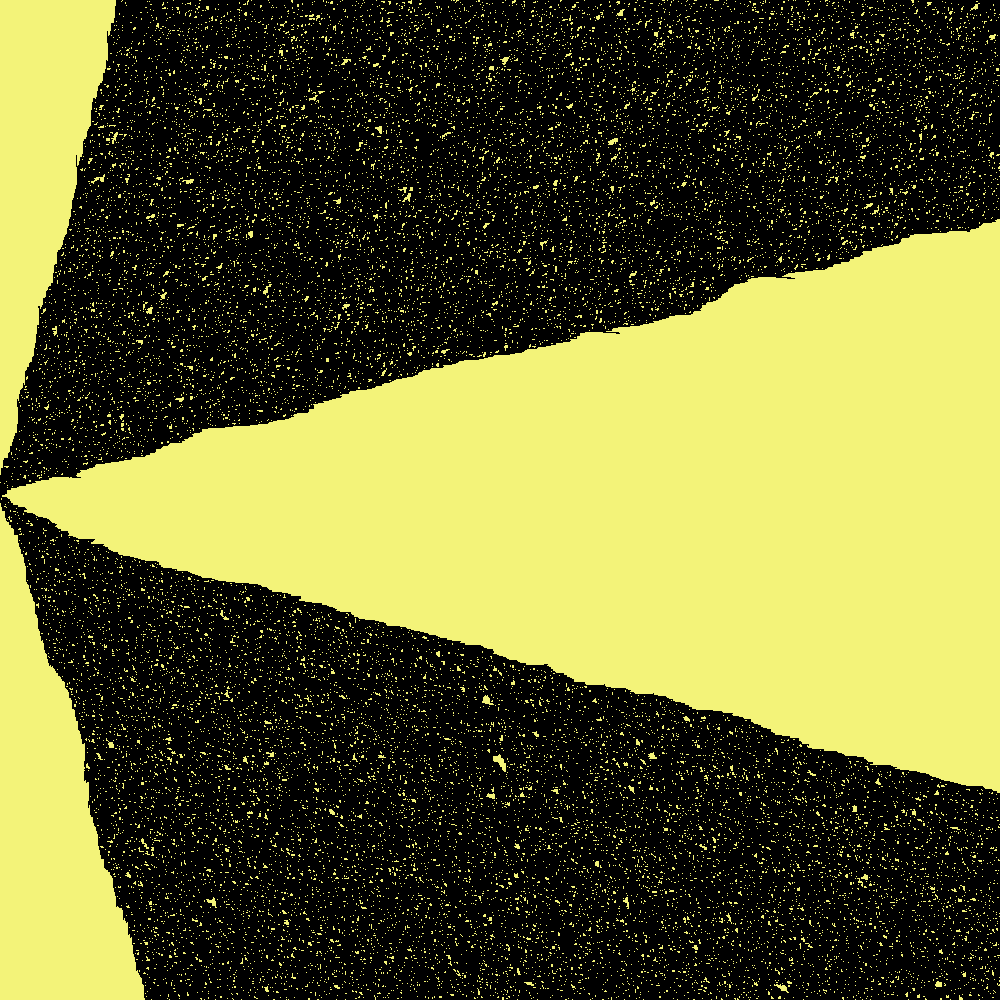}
      \hspace{\stretch{1}}
      \includegraphics[width=.35\textwidth]{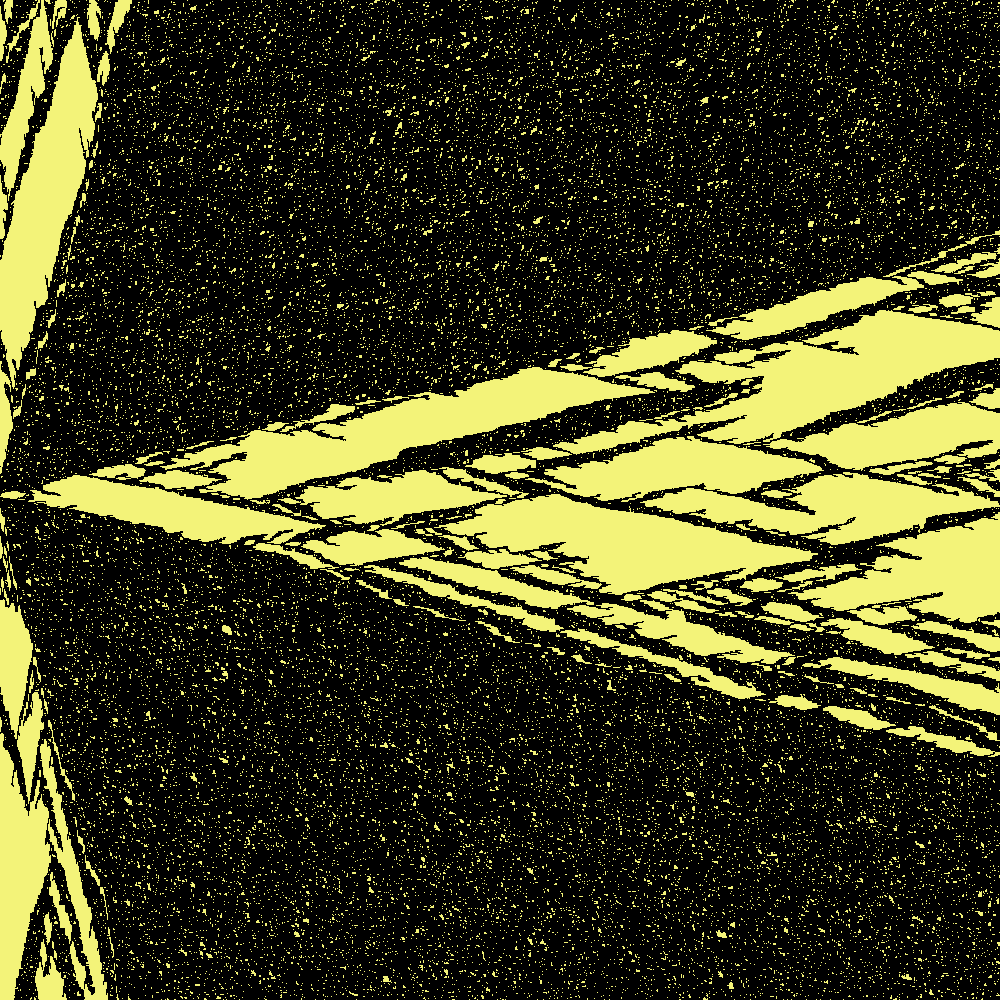}
      \hspace{\stretch{1}}
   \caption{Detailed view of Figure~\ref{fig:speed09d} for $p=0.2$ and $\lambda=0.9$ and $0.8$, respectively.}
   \label{fig:detspeed09d}
\end{figure}

Our theorem below establishes the existence of a coexistence phase. 
We note that here the phase for survival for $\eta^1$ is stronger than 
that shown in Theorem~\ref{thm:fpp}. Here we show that for some small enough $p$, $\eta^1$ survives for \emph{any} $\lambda<1$. The actual value of 
$\lambda$ plays a role only on determining whether coexistence happens. 
In the theorem below and its proof, a directed path in $\mathbb{Z}^d$ is defined to be a path whose jumps are only along the \emph{positive} 
direction of the coordinates.
\begin{theorem}\label{thm:discrete}
   For any $\lambda\in(0,1)$ and any $p\in(0,1-p_\crit^\dir)$, 
   where $p_\crit^\dir=p_\crit^\dir(\mathbb{Z}^d)$ denotes the critical probability for \emph{directed} site percolation in $\mathbb{Z}^d$, 
   we have 
   \begin{equation}
      \PR\lr{\eta^1\text{ produces an infinite cluster}}>0.
      \label{eq:discrete1}
   \end{equation}
   Furthermore, for any $\lambda\in(0,1)$, there exists a positive $p_0<1-p_\crit^\dir$ such that for any $p\in(p_0,1-p_\crit^\dir)$ we have
   \begin{equation}
      \PR\lr{\eta^1\text{ and }\eta^2\text{ both produce infinite clusters}}>0.
      \label{eq:discrete2}
   \end{equation}
\end{theorem}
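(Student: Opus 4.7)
The plan is to handle the two claims separately, with the key underlying mechanism being that $\lambda<1$ lets $\eta^1$ outrun $\eta^2$ along any seed-free directed path.

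For \eqref{eq:discrete1}, I would exploit the directed percolation structure of non-seeds. Since $1-p > p_\crit^\dir$, with positive probability the origin lies in an infinite directed cluster of non-seeds (note that the origin is never itself a seed); I condition on this event and fix an infinite directed non-seed path $\gamma = (0 = x_0, x_1, x_2, \ldots)$. The heart of the proof is then a deterministic race argument showing, by induction on $k$, that $\eta^1$ occupies $x_k$ by time $k$. Assuming this holds up to $k-1$, $\eta^1$ attempts to jump onto $x_k$ at some time $\leq k$, and since $x_k$ is not a seed the only obstruction is $\eta^2$ arriving first from some activated seed $y$ at graph distance $m \geq 1$ from $x_k$. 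The activation time satisfies $t_y \geq d(0,y) \geq d(0,x_k)-m = k-m$ (using $d(0,x_k)=k$ along the directed path together with the triangle inequality), while the travel time for $\eta^2$ from $y$ to $x_k$ is at least $m/\lambda$. Hence
$$
t_y + m/\lambda \;\geq\; (k-m) + m/\lambda \;=\; k + m\lr{\tfrac{1}{\lambda}-1} \;>\; k,
$$
contradicting arrival before time $k$ and completing the induction.

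For \eqref{eq:discrete2}, I still want \eqref{eq:discrete1}'s mechanism to deliver the infinite $\eta^1$-cluster, so the extra work is to simultaneously produce an infinite $\eta^2$-cluster on the same positive-probability event. The guiding intuition is that as $p\uparrow 1-p_\crit^\dir$ the infinite directed non-seed cluster becomes \emph{thin} (its density tends to zero), so $\eta^1$ follows a sparse skeleton and large regions of space are reached by $\eta^1$ only after long detours. My plan is a block renormalization argument: at a mesoscopic scale $L$, show that with high probability an $L\times L$ block contains (i) a directed non-seed crossing between two designated faces, supporting the progress of $\eta^1$, and (ii) a disjoint seed-rich region, separated from the directed crossing by an $\eta^1$-detour of length at least $L/\lambda + O(1)$, inside which a single seed is activated early enough that its $\eta^2$-cluster crosses the block transversely at rate $\lambda$ before $\eta^1$ can reach it. A standard dependent oriented percolation argument then concatenates good blocks into infinite paths of both types.

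The main obstacle is part (ii): engineering the transverse escape of $\eta^2$. Since $\lambda<1$ while $\eta^1$ has speed $1$, in free space $\eta^1$ always wins; $\eta^2$ can survive only by exploiting detours that $\eta^1$ is structurally forced to take around macroscopic seed regions, and these detours need to have length at least $1/\lambda$ times the $\eta^2$-path length. The delicate step is thus calibrating $p_0$ in terms of $\lambda$ so that, on the one hand, non-seeds still support an infinite directed cluster (needed for \eqref{eq:discrete1}), and on the other hand the non-seed cluster is thin enough near $1-p_\crit^\dir$ that forced detours of the required length appear at positive density. Quantitative near-critical estimates on the geometry of directed site percolation (correlation length, shape of the infinite cluster) should be what drives this calibration.
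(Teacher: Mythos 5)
Your proof of~\eqref{eq:discrete1} is correct and, in fact, cleaner than the paper's.  The paper proves $\eta^1(t)\supseteq C_t$ by an informal case analysis of how $\eta^1$ detours around groups of seeds (with pictures), tracking what the ``shadow'' of each seed cluster looks like.  Your inductive race argument along a fixed infinite directed non-seed path $\gamma=(x_0,x_1,\ldots)$ makes the same point rigorously and quantitatively: since any activated seed $y$ within graph distance $m\geq 1$ of $x_k$ satisfies $t_y\geq d(0,y)\geq k-m$, and $\eta^2$ needs time $\geq m/\lambda$ from $y$ to $x_k$, the arrival time $t_y+m/\lambda\geq k+m(1/\lambda-1)>k$ strictly exceeds the time $\leq k$ at which $\eta^1$ attempts $x_k$ from $x_{k-1}$.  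Note also that this argument, applied simultaneously to all directed open paths, actually re-proves the paper's stronger statement $\eta^1(t)\supseteq C_t$, so nothing is lost.

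For~\eqref{eq:discrete2}, however, your sketch does not work as a proof, and the gap is not one of missing estimates but of a missing mechanism.  Your guiding intuition --- that near $1-p_\crit^\dir$ the infinite directed non-seed cluster is thin, so $\eta^1$ ``follows a sparse skeleton'' and large regions are reached only after long detours --- is misleading: $\eta^1$ is not restricted to the directed cluster, and it eventually fills in essentially everything except the shadows behind seed clusters, so ``sparseness of the directed cluster'' per se does not create room for $\eta^2$.  Your block-renormalization plan then asks for a transverse $\eta^2$-escape within a block, but you explicitly flag this as the unresolved step, and it is exactly the heart of the problem.  What the paper actually does is concrete and quite different: it observes that directed open paths never reach vertices strictly behind an axis seed, so $\eta^1$ is structurally forced to detour around the axis; it then tracks a one-dimensional process $X_k$ (the first coordinate direction at which column $k$ of the axis is taken by $\eta^1$), argues that each time $X$ leaves $0$ the detour forces $\eta^1$ to spend on the order of $1/(1-\lambda)$ steps off-axis, during which a fresh off-axis seed (probability $\geq p$ per step) can appear and push $X$ further up, and concludes that the probability of escaping to level $\geq 2$ before returning to $0$ is at least $1-(1-p)^{1/(1-\lambda)}$, yielding transience of $X$ and hence an infinite $\eta^2$-cluster along the axis.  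The ingredient your proposal lacks is this chain-of-shielding-seeds mechanism: $\eta^2$'s survival is powered not by thinness of $\eta^1$'s skeleton but by repeated lucky seed placements adjacent to the already-shadowed axis, each of which prolongs the detour and extends the shadow.  Without identifying a concrete substitute for this mechanism, the calibration of $p_0$ in terms of $\lambda$ that you gesture at has nothing to grab onto.
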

\begin{proof}
   Consider a directed percolation process on $\mathbb{Z}^d$ where a vertex is declared to be open if it is not in $\eta^2(0)$, otherwise the vertex is closed.
   For any $t\geq0$, let $C_t$ be the vertices reachable from the origin by a directed path of length at most $t$ where all vertices
   in the path are open. 
   We will prove~\eqref{eq:discrete1} by showing that
   \begin{equation}
      \eta^1(t)\supseteq C_t \quad \text{for all $t\geq 0$}.
      \label{eq:induction}
   \end{equation}
   Let $x\in\eta^2(0)$ be the vertex of $\eta^2(0)$ that is the closest to the origin, in $\ell_1$ norm. 
   Clearly, for any time $t<\|x\|_1$, we have that 
   $\eta^1(t)$ has not yet interacted with $\eta^2(0)$, giving that $\eta^1(t) = \{y\in\mathbb{Z}^d \colon \|y\|_1 \leq t\}= C_t$.
   See Figure~\ref{fig:determ}(a) for an illustration.
   Then, at time $\|x\|_1$, $\eta^1$ tries to occupy all vertices at distance $\|x\|_1$ from the origin, leading to the configuration in 
   Figure~\ref{fig:determ}(b) and activating the seed $x$ of $\eta^2(0)$, illustrated in pink in the picture.
   Since $\eta^1$ is faster than $\eta^2$, $\eta^1$ is able to ``go around'' $x$, traversing the same path as in a directed percolation process.
   This leads to the configuration in~\ref{fig:determ}(c). Note that the same behavior occurs when $\eta^1$ finds a larger set of consecutive seeds of $\eta^2$
   at the same $\ell_1$ distance from the origin. For example, see what happens with the three red seeds in Figure~\ref{fig:determ}(d--f). 
   In this case, a directed percolation 
   process does not reach any vertex inside the red triangle in Figure~\ref{fig:determ}(f), as those vertices are shaded by the three red seeds.
   Since $\eta^2$ is slower than $\eta^1$, the cluster of $\eta^2$ that starts to grow when the three red seeds are activated
   cannot occupy any vertex outside of the red triangle. 
   \begin{figure}[htbp]
      \begin{center}
         \includegraphics[width=.22\textwidth]{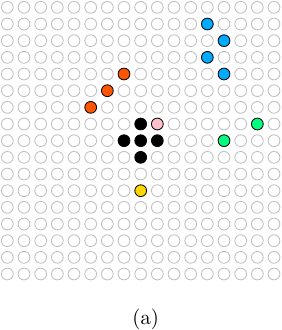}
         \hspace{\stretch{1}}
         \includegraphics[width=.22\textwidth]{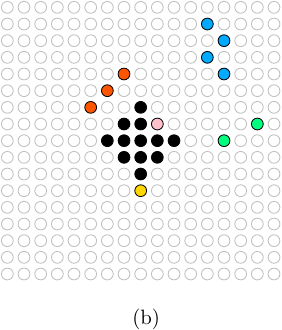}
         \hspace{\stretch{1}}
         \includegraphics[width=.22\textwidth]{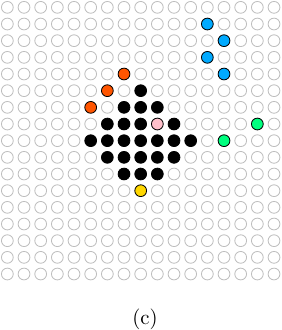}
         \hspace{\stretch{1}}
         \includegraphics[width=.22\textwidth]{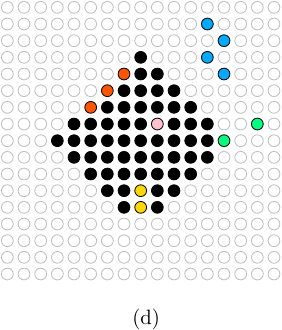}\\\vspace{2ex}
         \includegraphics[width=.22\textwidth]{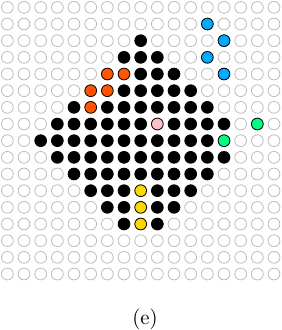}
         \hspace{\stretch{1}}
         \includegraphics[width=.22\textwidth]{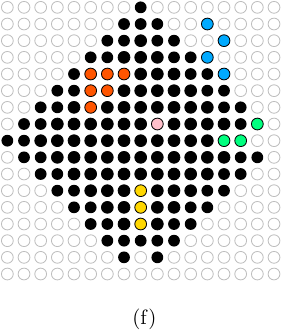}
         \hspace{\stretch{1}}
         \includegraphics[width=.22\textwidth]{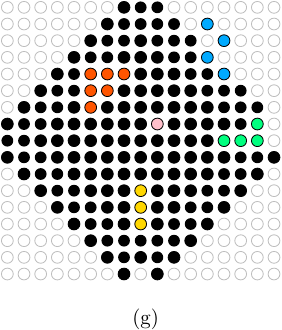}
         \hspace{\stretch{1}}
         \includegraphics[width=.22\textwidth]{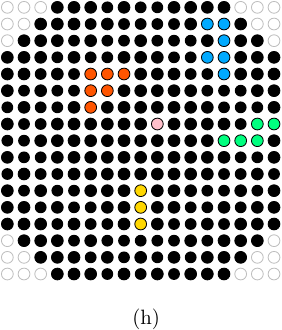}
      \end{center}\vspace{-.5cm}
      \caption{Evolution of \fpphe{} with deterministic passage times and $\lambda=0.59$. 
         Black vertices represent $\eta^1$ and white vertices represent unocuppied vertices. 
         All other colors represent clusters of $\eta^2$.}
      \label{fig:determ}
   \end{figure}
   
   A different situation occurs when $\eta^1$ finds a vertex of $\eta^2(0)$ in the axis, as with the yellow vertex of Figure~\ref{fig:determ}(c). 
   Note that, in a directed percolation process, all vertices below the yellow seed will not be reachable from the origin. 
   In our two-type process, something similar occurs, but only for a finite number of steps. 
   When $\eta^1$ activates the yellow seed at $x\in\eta^2(0)$, 
   $\eta^1$ cannot immediately go around $x$ as explained above. For $\lambda$ close enough to $1$, 
   $\eta^2$ occupies the successive vertex in the axis before 
   $\eta^1$ can go around $x$. This continues for some steps, with $\eta^2$ being able to grow along the axis; 
   see Figure~\ref{fig:determ}(d,e). 
   However, at each step $\eta^1$ will be $1-\lambda$ faster than $\eta^2$. 
   This will accumulate for roughly $\frac{1}{1-\lambda}$ steps, when $\eta^1$ will finally be 
   able to go around $\eta^2$; as in Figure~\ref{fig:determ}(f). 
   This happens unless $\eta^2(0)$ happens to have a seed at a vertex neighboring one of the vertices on the axis occupied by the growth of $\eta^2$. 
   This is illustrated by the green vertices of Figure~\ref{fig:determ}(f--h). When the first green vertex out of the axis is activated, $\eta^1$ will not 
   be able to occupy the vertex to the right of the green vertex, 
   and will encounter the next green seed before it can go around the first green seed found at the axis.
   The crucial fact to observe is that the clusters of $\eta^2$ that start to grow after the activation of each green seed 
   can only occupy vertices located to the right of the seeds, 
   and at the same vertical coordinate. This is a subset of the vertices that are shaded by the green seeds in a directed percolation process.
   Therefore,~\eqref{eq:discrete1} follows since the vertices occupied by $\eta^2$ are a subset of the following set: 
   take the union of all triangles obtained from
   sets of consecutive seeds away from the axis (as with the pink, red and blue seeds in Figure~\ref{fig:determ}), and take the union of semi-lines 
   starting at seeds located at the axis or at seeds neighboring semi-lines starting from seeds of smaller $\ell_1$ distance to the origin
   (as with the yellow and green seeds in Figure~\ref{fig:determ}). This set is exactly the set of vertices not reached by a directed path from the origin.
   
   Now we turn to~\eqref{eq:discrete2}. 
   First notice that, from the first part, we have that $\eta^1(t)\supseteq C_t$ for all $p$ and $\lambda$. 
   Since $C_t$ does not depend on $\lambda$, once we fix $p\in(0,1-p_\crit^\dir)$, we can take $\lambda$ as close to $1$ as we want, and $\eta^1$ will
   still produce an infinite component. Now we consider one of the axis. For example, the one containing the green vertices in Figure~\ref{fig:determ}. 
   Let $(x,0)$ be the first vertex occupied by $\eta^2$ in that axis.
   For each integer $k$, we will define $X_k$ as the smallest non-negative integer such that $(k,X_k)$ will be occupied by $\eta^1$. Similarly, 
   $Y_k$ is the smallest non-negative integer such that $(k,-Y_k)$ will be occupied by $\eta^1$. Now we analyze the evolution of $X_k$; the one of $Y_k$ will be 
   analogous. Assume that $X_1,X_2,\ldots,X_{k-1}=0$. Then, with probability at least $p$ we have that
   $X_{k+1}\geq 1$. When this happens, $\eta^1$ will need to do at least $\frac{1}{1-\lambda}$ steps before being able to occupy the axis again. 
   However, for each $s\geq 2$, the probability that $X_{k+s}> X_{k+1}$ is at least $p$. This gives that the probability that the random variable $X$
   reaches value above $1$ before going back to zero is at least $1-(1-p)^{\frac{1}{1-\lambda}}$. 
   Once we have fixed $p$, by setting $\lambda$ close enough to $1$ we 
   can make this probability very close to $1$. This gives that $X_k$ has a drift upwards. Since the downwards jumps of $X_k$ are of size at 
   most $1$, this implies that at some time $X_k$ will depart from $0$ and will never return to it. 
   A similar behavior happens for $Y_k$, establishing~\eqref{eq:discrete2}.
\end{proof}

\section{Preliminaries on first passage percolation}\label{sec:prelim}
Let $\Q$ be a probability distribution on $(0,\infty)$ with no atoms and with a finite exponential moment.
Consider a first passage percolation process $\{\xi(t)\}_t$, which starts from the origin and spreads according to $\Q$. More precisely, 
for each pair of neighboring vertices $x,y\in\mathbb{Z}^d$, let 
$\zeta_{x,y}$ be an independent random variables of distribution $\Q$. 
The value $\zeta_{x,y}$ is regarded as the time that $\xi$ needs to spread throughout the edge $(x,y)$. 
Note that $\zeta$ defines a random metric on $\mathbb{Z}^d$, where the distance between two vertices is the length of the shortest path between them, and the length of a path is 
the sum of the values of $\zeta$ over the edges of the path.
Hence, given any initial configuration $\xi(0)\subset\mathbb{Z}^d$, the set $\xi(t)$ comprises all vertices of $\mathbb{Z}^d$ that are within distance 
$t$ from $\xi(0)$ according to the metric $\zeta$. We assume throughout the paper that $d\geq 2$. 

For $X\subset\mathbb{Z}^d$, let $\PQ_X^\Q$ be the probability measure induced by the process $\xi$ when $\xi(0)=X$.
When the value of $\xi(0)$ is not important, we will simply write $\PQ^\Q$, and when $\Q$ is the exponential distribution of rate $1$, we write $\PQ$.

Let $\tilde\xi(t)\subset \mathbb{R}^d$ be defined by 
$\tilde\xi(t)=\bigcup_{x\in\xi(t)}\lr{x+[-1/2,1/2]^d}$; that is, $\tilde\xi(t)$ is obtained by adding a unit cube centered at each point of $\xi(t)$. 
A celebrated theorem of Richardson~\cite{Richardson}, extended by Cox and Durrett~\cite{CoxDurrett}, establishes that the rescaled set 
\begin{equation}
   \text{$\frac{\tilde \xi(t)}{t}$ converges as $t\to\infty$ to a deterministic set,
   which we denote by $\mathcal{B}_\Q\subset\mathbb{R}^d$.}
   \label{eq:shapethm}
\end{equation}
Such a result is now widely referred to as a \emph{shape theorem}, and $\mathcal{B}_\Q$ is referred to as the \emph{limit set}.
The set $\mathcal{B}_\Q$ defines a norm $|\cdot|_\Q$ on $\mathbb{R}^d$ via
$$
   |x|_\Q = \inf\big\{r\in\mathbb{R}_+ \colon x\in r\mathcal{B}_\Q\big\}, \quad x\in\mathbb{Z}^d.
$$
We abuse notation and define, for any $t\geq0$, $\ballQ{t}$ as the ball of radius $t$ according to the norm above: 
$\ballQ{t}=\lrc{x\in\mathbb{Z}^d \colon |x|_\Q\leq t}$.
As before, we drop the subscript $\Q$ when $\Q$ is the exponential distribution of rate $1$.

In~\cite[Theorem 2]{Kesten93}, Kesten derived upper bounds on the fluctuations of $\xi(t)$ around $\ballQ{t}$. 
We state Kesten's result in Proposition~\ref{pro:kesten} below, in a form that is more suitable to our use later\footnote{Actually we will only need large deviations results, similar to those derived already in~\cite{GK}. However, 
we will state and use the finer result of Kesten~\cite{Kesten93} as it controls deviations not only in a given direction but from the whole limit shape $\mathcal{B}_\Q$.}.
Before, we need to introduce some notation.
Given any set of positive values $\{\zeta'_{x,y}\}_{x,y}$ to the edges of the lattice, 
which we from now on refer to as \emph{passage times}, 
and given any two vertices $x,y \in \mathbb{Z}^d$, 
let 
\begin{equation}
   D(x,y; \zeta') \text{ be the distance between $x$ and $y$ according to the metric $\zeta'$}.
   \label{eq:defd}
\end{equation}
We extend this notion to subsets by writing 
$$
   D(X,Y;\zeta') = \inf_{x\in X, y\in Y}D(x,y;\zeta'), \quad \text{for any $X,Y\subset \mathbb{Z}^d$}.
$$
For two vertices $x,y\in\mathbb{Z}^d$ we use the notation 
$$
   x\sim y \text{ if $x$ and $y$ are neighbors in $\mathbb{Z}^d$}.
$$ 
Furthermore, for any set $A\subset\mathbb{Z}^d$, define 
\begin{align*}
   \text{the \emph{inner boundary} of $A$ by } \bddi A=\{x\in A \colon \exists y\in\mathbb{Z}^d\setminus A \text{ and }x\sim y\},\\
   \text{the \emph{outer boundary} of $A$ by }
   \bddo A=\{x\in \mathbb{Z}^d\setminus A \colon \exists y\in A \text{ and }x\sim y\},\\
   \text{and the \emph{edge boundary} of $A$ by }
   \bdde A=\{(x,y)\in \mathbb{Z}^d\times\mathbb{Z}^d, x\sim y \colon x\in A \text{ and } y\not \in A\}.
\end{align*}
Given a set $A\subset\mathbb{Z}^d$, we say that an event is measurable with respect to passage times inside $A$ if the event is measurable with respect 
to the passage times of the edges whose both endpoints are in $A$.

For any $t>0$, any $\delta\in(0,1)$, and any set of passage times $\zeta'$, define the event 
$$
   S_t^\delta(\zeta') = \lrc{\inf_{x\in\bddi \ballQ{(1+\delta)t}}D(0,x; \zeta' )\leq t} \cup \lrc{\sup_{x\in\bddo\ballQ{(1-\delta)t}} D(0,x; \zeta')\geq t}.
$$
Disregarding some discrepancies in the choice of the boundary, $S_t^\delta(\zeta')$ is the event that $\xi(t)$ is either not contained in $\ballQ{(1+\delta)t}$ or does not contain 
$\ballQ{(1-\delta)t}$.
\begin{proposition}\label{pro:kesten}
   Let $\Q$ be a probability distribution on $(0,\infty)$, with no atoms, and with a finite exponential moment.
   There exist constants $c_1,c_2,c_3>0$ depending on $d$ and $\Q$ such that, for all $t\geq1$ and all $\delta > c_1 t^{-\frac{1}{2d+4}}(\log t)^\frac{1}{d+2}$, 
   \begin{equation}
      \PQ^\Q\lr{S_t^\delta(\zeta)} \leq c_2 \exp\lr{-c_3 t^\frac{d+1}{2d+4}}.
      \label{eq:kestenbound}
   \end{equation}
   Moreover, we have that 
   \begin{equation}
      S_t^\delta(\zeta') \text{ is measurable with respect to the passage times $\lrc{\zeta'_{x,y} \colon x\sim y \text{ and } x, y\in \ball{(1+\delta)t}}$}.
      \label{eq:measurability}
   \end{equation}
\end{proposition}
\begin{proof}
   First we establish~\eqref{eq:measurability}. Note that 
   the event $\lrc{\inf_{x\in \bddi\ballQ{(1+\delta)t}}D(0,x; \zeta)\leq t}$ is measurable with respect to the passage times inside $\ballQ{(1+\delta)t}$. 
   Then, if this event does not hold, that is under $\lrc{\inf_{x\in \bddi\ballQ{(1+\delta)t}}D(0,x; \zeta)>t}$, 
   the event $\lrc{\sup_{x\in \bddo\ballQ{(1-\delta)t}}D(0,x; \zeta)\leq t}$ is also measurable with 
   respect to the passage times inside $\ballQ{(1+\delta)t}$, establishing~\eqref{eq:measurability}. 
   The bound in~\eqref{eq:kestenbound} follows directly from Kesten's result~\cite[Theorem 2]{Kesten93}.
\end{proof}

\section{Encapsulation of competing first passage percolation}\label{sec:hp}
Here we consider two first passage percolation processes that compete for space as they grow through $\mathbb{Z}^d$. 
One of the processes spreads throughout $\mathbb{Z}^d$ at rate $1$, while the other spreads according to a distribution $\Q$ such that its limit shape is contained in $\ball{\lambda}$ for some 
$\lambda<1$, with $\lambda$ being a 
parameter of the system. 
We will say that $\lambda$ is the \emph{rate of spread} of the second process.
We assume that the starting configuration of each process comprises only a finite set of vertices. In this case, 
one expects that both processes cannot simultanenously grow indefinitely; that is, one of the processes will eventually surround the other, 
confining it to a finite subset of $\mathbb{Z}^d$.
This was studied by H{\"a}ggstr{\"o}m and Pemantle~\cite{HP2000}.
In the proof of our main result, we will employ a refined version of a result in their paper. In particular, 
we will give a lower bound on the probability that the faster 
process surrounds the slower one within some fixed time. 

First we define the processes precisely. 
Let $\xi^1$ denote the faster process so that, for each time $t\geq0$, $\xi^1(t)$ gives the set of vertices occupied by the faster process at time $t$.
Similarly, let $\xi^2$ denote the slower process.
For each neighbors $x,y \in \mathbb{Z}^d$, 
let $\zeta^1_{x,y}$ be an independent exponential random variables of rate $1$, and let $\zeta^2_{x,y}$ be an independent random variable of distribution $\Q$. 
For $i\in\{1,2\}$, $\zeta^i_{x,y}$ represents the passage time of process $\xi^i$ through the edge $(x,y)$.

The processes start at disjoint sets $\xi^1(0),\xi^2(0)\subset \mathbb{Z}^d$. Then they spread throughout $\mathbb{Z}^d$ according to the passage times 
$\zeta^1$ and $\zeta^2$ with the constraint that, whenever a vertex is occupied by either $\xi^1$ or $\xi^2$, the other process cannot occupy that vertex afterwards.
Therefore, for any $t\geq0$, we obtain that $\xi^1(t)$ and $\xi^2(t)$ are disjoint sets.
To define $\xi^1,\xi^2$ more precisely, we will iteratively set $s^k(x)$, for each $x\in\mathbb{Z}^d$ and $k\in\{1,2\}$, so that at the end $s^k(x)$ is the time $x$ is occupied by process $k$, or 
$s^k(x)=\infty$ if $x$ is not occupied by 
process $k$.
Start setting $s^1(x)=0$ for all $x\in\xi^1(0)$, 
$s^2(x)=0$ for all $x\in\xi^2(0)$, 
and $s^k(x)=\infty$ for all $k\in\{1,2\}$ and $x\not\in\xi^k(0)$. 
Then, choose the value of $k\in\{1,2\}$ and the pair of neighboring vertices $x,y$ with $s^k(x)<\infty$ and $s^k(y)=\infty$ that minimizes $s^k(x)+\zeta^k_{x,y}$, and set $s^k(y)=s^k(x)+\zeta^k_{x,y}$.
Then,
$$
   \xi^1(t) = \lrc{x\in\mathbb{Z}^d \colon s^1(x)\leq t}
\quad \text{and} \quad
   \xi^2(t) = \lrc{x\in\mathbb{Z}^d \colon s^2(x)\leq t}.
$$ 
Given two sets $X_1,X_2,\subset \mathbb{Z}^d$, let $\PQ_{X_1,X_2}^\Q$ denote the probability measure induced by the processes $\xi^1,\xi^2$ with initial 
configurations $\xi^1(0)=X_1$ and $\xi^2(0)=X_2$. 

The proposition below is a more refined version of a result of H\"aggstr\"om and Pemantle~\cite[Proposition~2.2]{HP2000}. 
It establishes that if $\xi^2$ starts from inside $\ball{r}$ for some $r\in\mathbb{R}_+$, and $\xi^1$ starts from a single vertex outside of a larger ball $\ball{\alpha r}$, for some $\alpha>1$, then 
there is initially a large separation between $\xi^1$ and $\xi^2$, allowing $\xi^1$ to surround $\xi^2$ with high probability. 
Moreover, we obtain that $\xi^1$ will eventually confine $\xi^2$ to some set $\ball{R}$ for some given $R$, and the probability that this happens goes to $1$ with $\alpha$.
We need to state this result in a high level of detail, as we will apply it at various scales later in our proofs.
We say that an event is \emph{increasing} (resp., \emph{decreasing}) with respect to some passage times $\zeta$ if whenever the event holds for 
$\zeta$ it also holds for any passage times $\zeta'$ that satisfies 
$\zeta'_{x,y}\geq \zeta_{x,y}$ (resp., $\zeta'_{x,y}\leq \zeta_{x,y}$) for all neighboring $x,y\in\mathbb{Z}^d$.
\begin{proposition}\label{pro:encapsulate}
   There exist positive constants $c_1,c_2$ depending only on $d$ so that, for any $\lambda\in(0,1)$, any $r>1$, 
   and any $\alpha>\left(\frac{1}{\lambda(1-\lambda)}\right)^{c_1}$, if $\Q$ is such that $\mathcal{B}_\Q \subset \ball{\lambda}$,
   we can define deterministic values $R$ and $T=T(R)$ satisfying $R\leq \alpha r \exp\lr{\frac{c_1}{1-\lambda}}$ and  
   $T\leq R\lr{\frac{11-\lambda}{10}}^2$ such that the following holds. 
   For any $x\in\bddo \ball{\alpha r}$,
   there is an event $F$ that is 
   measurable with respect to the passage times $\zeta^1,\zeta^2$ inside $\ball{R \lr{\frac{11-\lambda}{10}}^2}$ 
   and is increasing with respect to $\zeta^2$ and 
   decreasing with respect to $\zeta^1$,
   whose occurrence implies 
   $$
      \xi^1(T) \supset \ball{R}\setminus \ball{\frac{10R}{11-\lambda}},
   $$
   and whose probability of occurrence satisfies
   $$
      \PQ_{x,\ball{r}}^\Q\lr{F}
      \geq 1-\exp\lr{-c_2 (\lambda(1-\lambda)\alpha r)^{\frac{d+1}{2d+4}}}.
   $$
   In particular, when $F$ occurs, within time $T$, $\xi^1$ encapsulates $\xi^2$ inside $\ball{R}$.
\end{proposition}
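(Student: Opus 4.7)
The plan is a multi-scale iterative encapsulation, using Proposition~\ref{pro:kesten} as the quantitative workhorse at each of $K = O(1/(1-\lambda))$ stages. The invariant I want to maintain at stage $k$ is that $\xi^1$ has occupied an annular ``cap'' around the origin at scale $r_k$ and of angular opening $\theta_k$, while $\xi^2$ is confined to $\ball{s_k}$ for some $s_k < r_k$. Both $r_k$ and $\theta_k$ grow monotonically in $k$, and encapsulation is achieved as soon as $\theta_K \geq 2\pi$, at which point $\xi^1$ contains the full annulus $\ball{R} \setminus \ball{10R/(11-\lambda)}$ claimed in the statement.

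For the base case, I would apply Proposition~\ref{pro:kesten} at a common time $t_0$ of order $\alpha r$, separately to $\xi^1$ grown from $x$ (rate $1$) and to $\xi^2$ grown from $\ball{r}$ (rate $\lambda$). The hypothesis $\alpha > (\lambda(1-\lambda))^{-c_1}$ is used precisely to ensure that, after absorbing the Kesten fluctuations, the Richardson ball for $\xi^1$ centered at $x$ already contains a cap of positive angular measure of some sphere at a scale $r_0$ of order $\alpha r$ around the origin, while $\xi^2$ still lies strictly inside that sphere. For the inductive step $k \to k+1$, I set $r_{k+1} = (1+\epsilon) r_k$ for a small $\epsilon = \epsilon(\lambda)$, and apply Proposition~\ref{pro:kesten} taking (a suitable subset of) the current annular cap of $\xi^1$ as the starting set, over an additional time $\Delta t_k$ of order $\epsilon r_k$. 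This simultaneously extends $\xi^1$'s cap radially and adds arc-length roughly $(1-\delta)\Delta t_k$ to its angular coverage, while $\xi^2$'s front can only push out radially by $\lambda \Delta t_k$. Choosing $\epsilon$ correctly yields a net angular gain of order $1-\lambda$ per stage, so after $K = O(1/(1-\lambda))$ stages one has $\theta_K \geq 2\pi$ and $r_K \leq \alpha r (1+\epsilon)^K \leq \alpha r \exp(c_1/(1-\lambda))$, matching the required $R$. The cumulative time $T = \sum_k \Delta t_k$ fits under $R \bigl(\tfrac{11-\lambda}{10}\bigr)^2$ after absorbing $\lambda$-dependent constants.

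I then define $F$ as the intersection over all stages of the good events produced by Proposition~\ref{pro:kesten}. Each such event asserts that $\xi^1$ contains a prescribed deterministic set and that $\xi^2$ is contained in another prescribed deterministic set, so $F$ is automatically decreasing in $\zeta^1$ and increasing in $\zeta^2$. By the measurability clause of Proposition~\ref{pro:kesten}, the stage-$k$ event depends only on passage times inside a ball of radius of order $r_k$, so all stages fit inside $\ball{R((11-\lambda)/10)^2}$ as required. Summing the Kesten failure probabilities $\exp(-c(\lambda(1-\lambda) r_k)^{(d+1)/(2d+4)})$ across $k$, the $k=0$ term dominates and yields the claimed overall bound. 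The main obstacle, and the real technical crux, is the regime $\lambda \uparrow 1$: the per-stage angular gain and the stage-$k$ Kesten fluctuations are both of order $\epsilon$, so $\epsilon$ must be chosen very carefully (in a $\lambda$-dependent way) to preserve a strictly positive angular gap uniformly over all $K$ stages without letting $r_K$ exceed $\alpha r \exp(c_1/(1-\lambda))$. Calibrating $\epsilon$, $\delta$, and the angular gap against the threshold $\alpha > (\lambda(1-\lambda))^{-c_1}$ is where most of the quantitative work goes.
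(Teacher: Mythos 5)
Your overall strategy is the right one and matches the paper's: iterate over growing annular sectors centered at the origin that $\xi^1$ must occupy, using Proposition~\ref{pro:kesten} at each stage, and assemble $F$ as the intersection of the per-stage Kesten events. However, the quantitative bookkeeping in your outline does not hang together, and here the calibration is not a detail to defer but the core of the proof. If the radial step per stage is the factor $1+\epsilon$ and the Kesten fluctuation parameter is $\delta$, then the time budget for stage $k$ just barely covers the radial distance $\epsilon r_k$ at the slowed-down speed $1-\delta$, so the slack available for lateral (angular) extension is only $O(\delta\epsilon)$, not $O(\epsilon)$. The non-catchup inequality for $\hat A_n^1$ versus $A_n^2$ forces both $\epsilon$ and $\delta$ to be $O(1-\lambda)$ when $\lambda\uparrow 1$ (the paper takes $\delta=(1-\lambda)/10$ for both), which makes the provable angular gain per stage $O((1-\lambda)^2)$ — the paper's angular step is $\delta^2$ — and hence $N=O\bigl(1/(1-\lambda)^2\bigr)$ stages, giving $(1+\delta)^N=e^{O(1/\delta)}=e^{O(1/(1-\lambda))}$. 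Your claim of $K=O(1/(1-\lambda))$ stages with per-stage angular gain of order $1-\lambda$ is internally inconsistent with your claimed bound $(1+\epsilon)^K\le e^{c_1/(1-\lambda)}$: that pair forces $\epsilon=O(1)$, but a constant radial step violates the non-catchup inequality once $\lambda$ is close to $1$. As written, your scheme would produce $R=O(\alpha r)$, which is not what the iteration can actually deliver.

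There is also a second, separate omission. To make $F$ both measurable inside a deterministic ball and sufficient to conclude that $\xi^1(T)\supset A_N^1$, the paper defines each per-stage event using the \emph{restricted} passage times $\zeta^1|_{\hat A_n^1}$ (Lemma~\ref{lem:eta1}). But then, to estimate the probability of the event, one must show that the unrestricted geodesics stay inside $\hat A_n^1$ with high probability; this is Lemma~\ref{lem:eta1b}, the counterintuitive ``$\xi^1$ is not too fast'' bound. Invoking the measurability clause of Proposition~\ref{pro:kesten} alone, as your outline does, addresses measurability of a pure-FPP event, but not the fact that the competing process may be diverted by $\xi^2$-occupied sites; the restricted-passage-time device plus Lemma~\ref{lem:eta1b} is what closes this gap.
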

We defer the proof of the proposition above to Appendix~\ref{sec:HPproof}.
The proof will follow along the lines of~\cite[Proposition~2.2]{HP2000}, but we need to perform some steps with more care, as 
we need to obtain bounds on the probability 
that $F$ occurs, to establish bounds on $R$ and $T$, 
to derive that $F$ is increasing with respect to $\eta^2$ and decreasing with respect to $\zeta^1$, and to obtain the measurability 
constraints on $F$. 

We will need to apply the above proposition in a more complex setting.
For this, it is important to keep in mind the process \fpphe{} defined in Section~\ref{sec:intro}, where a cluster of type~2 starts spreading from each type~2 seed when that seed is activated, and type~2 seeds are initially 
distributed in $\mathbb{Z}^d$ as a product measure. 
We will apply the encapsulation procedure of Proposition~\ref{pro:encapsulate} for each different cluster of type~2 growing out of its seed. This means that we will apply 
Proposition~\ref{pro:encapsulate} at several scales (that is, with different values of $r$) 
and at several places of $\mathbb{Z}^d$. 
The encapsulation happening in one place may end up interfering in the spread of type~1 and type~2 in the
other places. 

In order to have a version of Proposition~\ref{pro:encapsulate} that can handle this situation, we will focus in one such encapsulation. 
For that encapsulation, we represent type~1 as $\xi^1$, and assume that $\xi^1(0)$ contains at least one vertex from $\bddo\ball{\alpha r}$.
For the cluster of type 2 whose encapsulation we are considering, we let it start from $\xi^2(0)\subseteq \ball{r}$. 
Here $\xi^2$ will only represent the cluster of type~2 that spreads from $\xi^2(0)$. 
For the other clusters of type~2, we will not refer to them as $\xi^2$ but simply as type~2. 

To model the spread of the other clusters of type~2, we introduce a positive number $\gamma$ and two sequences of simply connected subsets $(\Pi_\iota)_\iota$ and $(\Pi'_\iota)_\iota$ of $\mathbb{Z}^d$, 
such that 
\begin{equation}
   \text{the sets $\Pi_\iota$ are all disjoint}
   \label{eq:pi1}
\end{equation}
and, for each $\iota\geq1$, we have
\begin{equation}
\begin{aligned}
   \Pi_\iota'\subset\Pi_\iota \subset y_\iota+ \ball{\gamma r}
   \text{ for some }y_\iota\in\mathbb{Z}^d,
   \text{ and moreover, }\\
   \Pi_\iota\setminus \Pi'_\iota 
   \text{ is simply connected and contains $\bddi \Pi_\iota$}. 
   \label{eq:pi2}
\end{aligned}
\end{equation}
The sets $\Pi_\iota$ represent the other regions of $\mathbb{Z}^d$ (of smaller scale) where the encapsulation of a cluster of type-2 of \fpphe{} may be happening while $\xi^1,\xi^2$ spread from $\xi^1(0),\xi^2(0)$, 
whereas the sets $\Pi_\iota'$ represent the regions inside which each type~2 cluster gets confined to.
(The value of $\gamma$ will be quite small, so that all $\Pi_\iota$ are of scale smaller than $r$ because clusters of scale larger than $r$ will be treated afterwards: 
in the proof we will consider the clusters essentially in order of their sizes.)
Outside of the set $\bigcup_\iota \Pi_\iota$, the spread of $\xi^1,\xi^2$ will follow the passage times $\zeta^1,\zeta^2$, respectively. 
However, the spread of $\xi^1,\xi^2$ inside $\bigcup_\iota \Pi_\iota$ may be different and quite complicated. 
We will not need to specify this precisely, we will only require the following properties:
\begin{enumerate}[label=(P\arabic*)]
    \item\label{it:pi1} For each $\iota$, if $\xi^2$ does not enter $\Pi_\iota$ from $\xi^2(0)$, then $\Pi_\iota\setminus \Pi_\iota'$ becomes entirely occupied by $\xi^1$. 
    \item\label{it:pi2} For any $\iota,x\in\bddi\Pi_\iota,$ and $y\in\Pi_\iota$, either the time that $\xi^1$ takes to spread from $x$ to $y$ within $\Pi_\iota$ is smaller than 
	   that given by the passage times $\zeta^1$, or $y$ is occupied by type 2.
	\item\label{it:pi3} For any $\iota,x\in\bddi\Pi_\iota,$ and $y\in\Pi_\iota$, either the time that $\xi^2$ takes to spread from $x$ to $y$ within $\Pi_\iota$ is larger than 
	   that given by the passage times $\zeta^2$, or $y$ is occupied by type 1.
\end{enumerate}
Above when we refer to the time given by the passage times $\zeta^k$, $k\in\{1,2\}$, we mean the time given by the passage times $\zeta^k$ when we completely ignore the presence of the
cluster of type~2 that grows from within $\Pi_\iota$.
Regarding properties~\ref{it:pi2} and~\ref{it:pi3} above, in our application of the proposition below, 
we will do some scaling of the passage times so that, within each $\Pi_\iota$, type~1 will actually spread at a rate faster than 1 while type~2 will spread at a rate slower than $\lambda$. 
Since within $\Pi_\iota$ type~1 needs to do a detour around the growing cluster of type~2, 
we will use a coupling argument to say that even with this detour type~1 will spread inside $\Pi_\iota$ faster than the passage times given by $\zeta^1$.
Similarly, within $\Pi_\iota$ type~2 may benefit from $\xi^2$ entering from outside and blocking type~1 as it attempts to encapsulate type~2. 
We will use a coupling argument to say that, even with the help from $\xi^2$, type~2 will spread inside $\Pi_\iota$ slower than the passage times given by $\zeta^2$.
This will become clearer in Section~\ref{sec:overview}, where we present a high-level description of the whole proof.
At this point, we do not need much detail of how the spread of type~1 and type~2 happen inside each $\Pi_\iota$. 

The goal of the proposition below, which is a refinement of Proposition~\ref{pro:encapsulate}, is to argue that with high probability the passage times $\zeta^1,\zeta^2$ are such that 
$\xi^1$ encapsulates $\xi^2$ inside a ball surrounding $\ball{r}$ unless there exists a set $\Pi_\iota$ that does not satisfy one of the properties \ref{it:pi1}--\ref{it:pi3}.
%
Given three sets $S_1,S_2,S_3\subset \mathbb{Z}^d$, we say that $S_1$ separates $S_2$ from $S_3$ if any path in $\mathbb{Z}^d$ from $S_2$ to $S_3$ intersects $S_1$.
\begin{proposition}\label{pro:encapsulate2}
   There exist positive constants $c_1,c_2,c_3$ depending only on $d$ so that, for any $\lambda\in(0,1)$, any $r>1$, 
   and any $\alpha>\left(\frac{1}{\lambda(1-\lambda)}\right)^{c_1}$, if $\Q$ satisfies $\mathcal{B}_\Q\subset \mathcal{B}(\lambda)$, 
   we can define deterministic values $R$ and $T=T(R)$ satisfying $R\leq \alpha r \exp\lr{\frac{c_1}{1-\lambda}}$ and  
   $T\leq R\lr{\frac{11-\lambda}{10}}^2$ such that the following holds. 
   For any $\gamma \leq c_3 \lambda(1-\lambda)\alpha$ and  
   any $x\in\bddo \ball{\alpha r}$,
   there is an event $F$ that is 
   measurable with respect to the passage times $\zeta^1,\zeta^2$ inside $\ball{R \lr{\frac{11-\lambda}{10}}^2}$ 
   and is increasing with respect to $\zeta^2$ and 
   decreasing with respect to $\zeta^1$, such that its occurrence implies that either
   $$
      \xi^1(T) \text{ separates } \bddi\ball{R} \text{ from } \ball{\frac{10R}{11-\lambda}}
   $$
   or there exist sets $\{\Pi_\iota\}_\iota,\{\Pi_\iota'\}_\iota$ satisfying~\eqref{eq:pi1} and~\eqref{eq:pi2}, 
   such that there exists $\iota$ for which $\Pi_\iota \cap \ball{R\left(\frac{11-\lambda}{10}\right)^2}\neq \emptyset$ and 
   $\Pi_\iota$ does not satisfy at least one of the 
   properties~\ref{it:pi1}--\ref{it:pi3}. 
   Moreover, we obtain that 
   $$
      \PQ_{x,\ball{r}}^\Q\lr{F}
      \geq 
      1-\exp\lr{-c_2 (\lambda(1-\lambda)\alpha r)^{\frac{d+1}{2d+4}}}.
   $$
\end{proposition}

\section{Proof of Theorem~\ref{thm:fpp}}\label{sec:proof}
Theorem~\ref{thm:fpp} will follow directly from Theorem~\ref{thm:fpp2} below, which we will prove in this section.
The proof is quite long, so we start with an overview. For clarity's sake, we discuss the proof overview under the setting of 
Theorem~\ref{thm:fpp}, and only state Theorem~\ref{thm:fpp2} in Section~\ref{sec:restate}.

\subsection{Proof overview}\label{sec:overview}
\begin{figure}[htbp]
   \begin{center}
      \includegraphics[width=.18\textwidth]{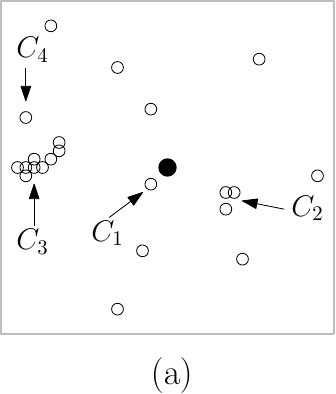}
      \hspace{\stretch{1}}
      \includegraphics[width=.18\textwidth]{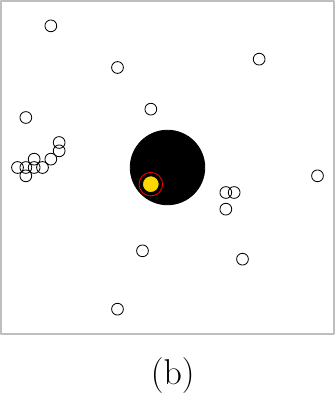}
      \hspace{\stretch{1}}
      \includegraphics[width=.18\textwidth]{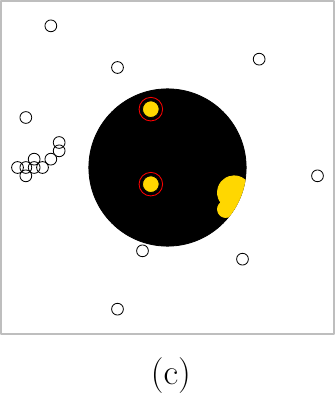}
      \hspace{\stretch{1}}
      \includegraphics[width=.18\textwidth]{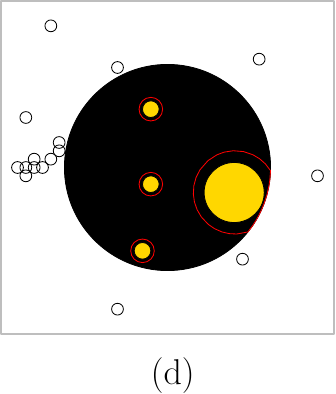}
      \hspace{\stretch{1}}
      \includegraphics[width=.18\textwidth]{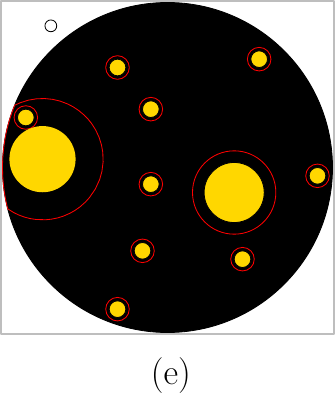}
   \end{center}\vspace{-.5cm}
   \caption{Illustration of the proof strategy of Theorem~\ref{thm:fpp}, 
      with the application of the encapsulation procedure of H{\"a}ggstr{\"o}m and Pemantle~\cite{HP2000} (cf.\ Proposition~\ref{pro:encapsulate}). 
      White balls indicate seeds of
      $\eta^2$ that were not yet activated, 
      black represents the growth of $\eta^1$, and  
      yellow balls represent the regions inside which the activated clusters of $\eta^2$ got trapped. 
      The red circle around each yellow ball corresponds to a larger area in which the passage times need to be observed in order to ensure the success of the encapsulation of the 
      $\eta^2$ cluster; for example, only to ensure the measurability requirement of the event in Proposition~\ref{pro:encapsulate} we need to observe the passage times in 
      $\ball{R\left(\tfrac{11-\lambda}{10}\right)^2}$, whereas the $\eta^2$ cluster is trapped within $\ball{\tfrac{10R}{11-\lambda}}$.}
   \label{fig:poverview}
\end{figure}
We start with a high-level overview of the proof. 
Below we refer to Figure~\ref{fig:poverview}.
Since $p$ is small enough, initially $\eta^1$ will grow without finding any seed of $\eta^2(0)$, as in Figure~\ref{fig:poverview}(a). 
When $\eta^1$ activates a seed of $\eta^2$, then we will apply Proposition~\ref{pro:encapsulate} to 
establish that $\eta^1$ will go around $\eta^2$, encapsulating it inside a small ball (according to the norm $|\cdot|$). 
This is illustrated by the encapsulation of $C_1$
in Figure~\ref{fig:poverview}(b).
The yellow ball in the picture marks the region inside which the cluster of $\eta^2$ will be trapped; 
in Proposition~\ref{pro:encapsulate} this corresponds to the ball $\ball{\frac{10R}{11-\lambda}}$.
To ensure the encapsulation of a cluster of $\eta^2$, we will need to observe the passage times inside a larger ball; for example,
only to ensure the measurability requirement of the event in Proposition~\ref{pro:encapsulate} we need to observe the passage times in 
$\ball{R\left(\frac{11-\lambda}{10}\right)^2}$. This larger ball is represented by the red circles in Figure~\ref{fig:poverview}.
As long as different red circles do not intersect one another, the encapsulation of different clusters of $\eta^2$ will happen
independently. However, when $\eta^1$ encounters a large cluster of $\eta^2$ seeds, as 
it happens with the cluster $C_3$ in Figure~\ref{fig:poverview}(d), 
the encapsulation procedure will require a larger region to succeed. We will carry this out by developing a multi-scale analysis of 
the encapsulation procedure, where the size of the region will depend, among other things, on the size of the clusters of $\eta^2(0)$. 
After the encapsulation
takes place, as in Figure~\ref{fig:poverview}(e), we are left with a larger yellow ball and a larger red circle. 
Also, whenever two clusters of $\eta^2(0)$ are close enough 
such that their corresponding red circles intersect, as it happens with $C_2$ in Figure~\ref{fig:poverview}(c), then the encapsulation cannot be guaranteed to succeed.
In this case, we see these clusters as if they were a larger cluster, and perform the encapsulation procedure over a slightly larger region, as 
in Figure~\ref{fig:poverview}(c,d). 

There is one caveat in the above description. Suppose $\eta^1$ encounters a very large cluster of $\eta^2$, for example $C_3$ in 
Figure~\ref{fig:poverview}(d).
It is likely that during the encapsulation of 
$C_3$, inside the red circle of this encapsulation, we will find smaller clusters of $\eta^2$. 
This happens in Figure~\ref{fig:poverview}(d) with $C_4$.
This does not pose a big problem, 
since as long as the red circle of the encapsulation of the small clusters do not intersect one another and do not intersect the 
yellow ball produced by the encapsulation of $C_3$, 
the encapsulation of $C_3$ will succeed. 
This is illustrated in Figure~\ref{fig:poverview}(e), where the encapsulation of $C_4$ happened inside the encapsulation of $C_3$.
There is yet a subtlety. During the encapsulation of $C_4$, 
the advance of $\eta^1$ is slowed down, 
as it needs to make a detour around the growing cluster of $C_4$. This slowing down could cause the encapsulation of $C_3$ to fail. 
Similarly, as $\eta^2$ spreads from $C_3$, 
$\eta^2$ may find vertices that have already been occupied by $\eta^2$ due to the spread of $\eta^2$ from other non-encapsulated seeds. 
This would happen, for example, if the yellow ball that grows from $C_3$ were to intersect the yellow ball that grows from $C_4$.
If this happens before the encapsulation of $C_4$ ends, then the spread of $C_3$ gets a small advantage. 
The area occupied by the spread of $\eta^2$ from $C_4$ can in this case be  
regarded as being absorbed by the spread of $\eta^2$ from $C_3$, causing $C_3$ to spread faster than if $C_4$ were not present.
We will need to show that 
$\eta^1$ is not slowed down too much by possible detours around smaller clusters, 
and $\eta^2$ is not sped up too much by the absorption of smaller clusters.


To do this, we will define a sequence of scales $R_1,R_2,\ldots$, with $R_k$ increasing with $k$.
The value of $R_k$ represents the radius of the region inside which encapsulation takes place at scale $k$. (Later when making this argument rigorous, for each scale $k$ we will need to introduce several radii, 
but to simplify the discussion here we 
can think for the moment that $R_k$ gives the radius of the red circles in Figures~\ref{fig:poverview},
and that the radius of the yellow circles at scale $k$ is just a constant times $R_k$.) 
The larger the cluster of seeds of $\eta^2$, the larger $k$ must be. 
We will treat the scales in order, starting from scale $1$. 
This procedure is illustrated in Figure~\ref{fig:proof} for the encapsulation of the configuration in Figure~\ref{fig:poverview}(a).
Once all clusters of scale $k-1$ or below have been treated, we look at all remaining 
(untreated) clusters that are not too big to be encapsulated at scale $k$. 
If two clusters of scale $k$ are too close to each other, so that their corresponding red circles intersect, we will not carry out the encapsulation and 
will treat these clusters as if they were one cluster from a larger scale, as illustrated in Figure~\ref{fig:proof}(a). 
After disregarding these, all remaining clusters of scale $k$ are disjoint
and can be treated independently using the more refined Proposition~\ref{pro:encapsulate2}. The $\Pi_\iota$ will be the clusters of scale smaller than $k$ that happen to fall inside the red circle of the cluster of scale $k$. 
Although small and going very fast to zero with $k$, 
the probability that the encapsulation procedure fails is still positive. So it will happen that some encapsulation will fail, 
as illustrated by the vertex at the top of Figure~\ref{fig:proof}(a). 
If this happens for some 
cluster of scale $k$, which is an event measurable with respect to the passage times inside a red circle of scale $k$ containing the $\eta^2$ seeds of that cluster, 
we then take the whole area inside this red circle 
and consider it as a larger cluster of $\eta^2(0)$ seeds, leaving it to be treated at a larger scale, as 
in Figure~\ref{fig:proof}(b). Then we turn to the next scale, as in Figure~\ref{fig:proof}(c,d).
\begin{figure}[htbp]
   \begin{center}
      \includegraphics[width=.22\textwidth]{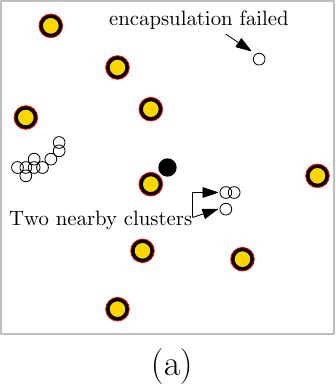}
      \hspace{\stretch{1}}
      \includegraphics[width=.22\textwidth]{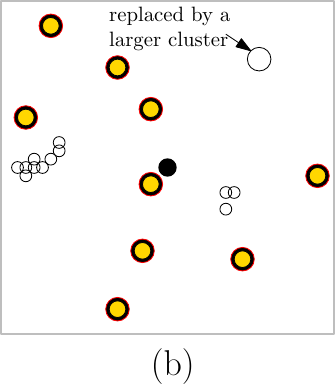}
      \hspace{\stretch{1}}
      \includegraphics[width=.22\textwidth]{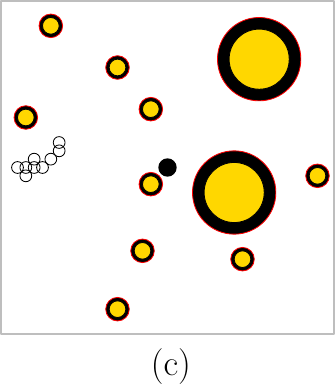}
      \hspace{\stretch{1}}
      \includegraphics[width=.22\textwidth]{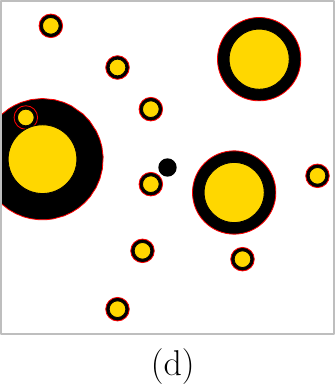}
   \end{center}\vspace{-.5cm}
   \caption{Illustration of the multi-scale encapsulation procedure. 
      (a) The encapsulation of all small clusters are analyzed, skipping clusters that are near other clusters.
      (b) Clusters whose encapsulation in the previous step failed are replaced by larger clusters. (c,d) Clusters of the next scale, or bundles of small clusters 
      that occupy a region of the size of the next scale, are evaluated and so on.}
   \label{fig:proof}
\end{figure}

In order to handle the slow down of $\eta^1$ due to detours imposed by smaller scales, and the sped up of $\eta^2$ due to absorption of smaller scales, we will 
introduce a decreasing sequence of positive numbers $\epsilon_1,\epsilon_2,\ldots$, as follows. In the encapsulation of a cluster $C$ of scale $k$, 
we will show not only that $\eta^1$ is able to encapsulate $C$, but also that $\eta^1$ does that sufficiently fast. We do this by coupling
the spread of $\eta^1$ inside the red circle of $C$ with a slower first passage percolation process of rate $\prod_{i=1}^ke^{-\epsilon_i}$ that evolves 
independently of $\eta^2$. In other words, this slower first passage percolation process does not need to do a detour around $C$, but pay the price 
by having slower passage times. We show that the spread of $\eta^1$ around $C$ is faster than that of this slower first passage percolation process.  
Similarly, we show that, even after absorbing smaller scales, $\eta^2$ still spreads slow enough inside the red circle of $C$,
so that we can couple it with a faster first passage percolation process of rate 
$\lambda\prod_{i=1}^ke^{\epsilon_i}$, which evolves independently of everything else. 
We show using this coupling that the spread of $\eta^2$ is slower than that of the faster first passage percolation
process.
Thus at scale $k$, $\eta^1$ is spreading at rate at least $\prod_{i=1}^ke^{-\epsilon_i}$ while $\eta^2$ is spreading at rate at most
$\lambda\prod_{i=1}^ke^{\epsilon_i}$, regardless of what happened at smaller scales. By adequately setting $\epsilon_k$, we can ensure that 
$\prod_{i=1}^ke^{-\epsilon_k}> \lambda \prod_{i=1}^ke^{\epsilon_k}$ for all $k$, allowing us to apply Proposition~\ref{pro:encapsulate2} at all scales.

The final ingredient is to develop a systematic way to argue that $\eta^1$ produces an infinite cluster. 
For this we introduce two types of regions, which we call \emph{contagious} and \emph{infected}. 
We start at scale $1$, where all vertices of $\eta^2(0)$ are contagious. 
Using the configuration in Figure~\ref{fig:poverview}(a) as an example, all white balls there are contagious.
The contagious vertices that do not belong to large clusters or are not close to other contagious
vertices, are treated at scale $1$. The other contagious vertices remain contagious for scale $2$. 
Then, for each cluster treated at scale $1$, either
the encapsulation procedure is successful or not. 
If it is successful, then the yellow balls produced by the encapsulation of these clusters are declared infected, and 
the vertices in these clusters are removed from the set of contagious vertices. In Figure~\ref{fig:proof}(b), the yellow area represents the infected vertices
after clusters of scale $1$ have been treated.
Recall that when an encapsulation is successful, all vertices reached by $\eta^2$ from that cluster must be contained inside the yellow area. 
On the other hand, if the encapsulation is not successful, then all vertices inside the red circle become contagious and go to scale 2, together with the 
other preselected vertices. 
An example of this situation is given by the cluster at the top-right corner of 
Figure~\ref{fig:proof}(b).
We carry out this procedure iteratively until there are no more contagious vertices or the origin has been disconnected from infinity 
by infected vertices. The proof is concluded by showing that $\eta^2$ is confined to the set of infected vertices, 
and that with positive probability the infected vertices will not
disconnect the origin from infinity.

{\bf Roadmap of the proof}. 
We now proceed to the details of the proof. We split the proof in few sections. 
In Section~\ref{sec:restate}, we state Theorem~\ref{thm:fpp2}, the more general version of Theorem~\ref{thm:fpp}.
Then in Section~\ref{sec:multiscale} we 
set up the multi-scale analysis, specifying the 
sizes of the scales and some parameters. This will define boxes of multiple scales, and we will classify boxes as being either \emph{good}
or \emph{bad}. Roughly speaking, a box will be good if the encapsulation procedure inside the box is successful. The concrete definition of good boxes
is done in Section~\ref{sec:good}. In Section~\ref{sec:prgood} we estimate the probability that a box is good, independent of what happens outside the 
box. We then introduce contagious and infected sets in Section~\ref{sec:ci}, and show that $\eta^2$ is confined to the set of infected vertices.
At this point, it remains to show that the set of infected vertices does not disconnect the origin from infinity. For this, we need to control the set of 
contagious vertices, which can actually grow as we move to larger scales (for example, this happens when some encapsulation procedure fails). 
The event that a vertex is contagious at some scale $k$ depends on what happens at previous scale. We estimate the probability of such event by
establishing a recursion over scales, which we carry out in Section~\ref{sec:recursion}. With this we have a way to control whether a vertex is infected. 
In order to show that the origin is not disconnected from infinity by infected vertices, we apply the first moment method. We sum, over all contours around the origin,
the probability that this contour contains only infected vertices. Since infected vertices can arise at any scale, we need to look at multi-scale paths and contours of 
infected vertices, which we do in Section~\ref{sec:multiscalepaths}. We then put all ingredients together and complete the proof of Theorem~\ref{thm:fpp}
in Section~\ref{sec:completing}.

\subsection{General version of Theorem~\ref{thm:fpp}}\label{sec:restate}
In this section we will consider a generalization of \fpphe, where the passage times of $\eta^2$ can be given by any distribution, while the passage times of $\eta^1$ are exponential random variables of rate $1$.

Let $\Q$ be a probability distribution on $(0,\infty)$, with no atoms, and such it has a finite exponential moment.
It holds by~\cite[Theorem~2.16]{ADH} that a first passage percolation with passage times given by i.i.d.\ random variables with distribution $\Q$ has a limit shape 
$\mathcal{B}_\Q$, as in~\eqref{eq:shapethm}.
Recall that $\ball{r}=r \mathcal{B}$ denotes the ball of radius $r$ according to the norm induced by the shape theorem of first passage percolation with passage times that are 
exponential random variables of rate 1.

For any edge $(x,y)$ of the lattice, 
let $\zeta^1_{x,y}$ be an independent exponential random variable of rate 1, and let $\zeta^2_{x,y}$ be an independent random variable distributed according to $\Q$.
For $i\in\{1,2\}$, $\zeta^i_{x,y}$ is regarded as the passage time of $\eta^i$ through $(x,y)$; that is, when $\eta^i$ occupies $x$, 
then after time $\zeta^i_{x,y}$ we have that $\eta^i$ will occupy $y$ provided that $y$ has not been occupied by the other type.

Recall that, for any $t$, we define $\bar\eta^1(t)$ as the set of vertices of $\mathbb{Z}^d$ that are not contained in the infinite component of $\mathbb{Z}^d\setminus \eta^1(t)$, 
which comprises $\eta^1(t)$ and all vertices of $\mathbb{Z}^d\setminus \eta^1(t)$ that are separated from infinity by $\eta^1(t)$.
Theorem~\ref{thm:fpp} follows immediately from the theorem below by taking $\Q$ to be the exponential distribution of rate $\lambda$.
\begin{theorem}\label{thm:fpp2}
   For any $\lambda<1$, there exists a value $p_0\in(0,1)$ such that the following holds.
   For all $p\in(0,p_0)$
   and all $\Q$ satisfying $\mathcal{B}_\Q \subseteq \ball{\lambda}$, there are positive constants 
   $c_1=c_1(p,d,\Q)$ and $c_2=c_2(p,d,\Q)$ so that 
   $$
      \PR\lr{\bar\eta^1(t) \supseteq B(0,c_1t) \text{ for all $t\geq0$}}>c_2.
   $$
\end{theorem}

\subsection{Multi-scale setup}\label{sec:multiscale}
Let $\epsilon\in(0,1/2)$ be fixed and small enough so that all inequalities below hold:
\begin{equation}
   \lambda e^{2x}<\lambda(1+3x) < 1-2x 
   \quad \text{for all $x\in(0,\epsilon]$}.
   \label{eq:delta}
\end{equation}
We can define positive constants $C_\FPP<C_\FPP'$, depending only on $d$, such that for all $r>0$ we have
\begin{equation}
   [-C_\FPP r,C_\FPP r]^d \subset \ball{r} \subset [-C_\FPP' r,C_\FPP' r]^d.
   \label{eq:cfpp}
\end{equation}
Set $C_\FPP$ to be the largest constant and $C_\FPP'$ to be the smallest constant satisfying~\eqref{eq:cfpp}.
Since $\ball{r}$ is convex and has all the symmetries of the lattice $\mathbb{Z}^d$, we not only obtain that $\ball{r}$ is contained in the $\ell_\infty$-ball of radius $C'_\FPP r$ but it contains 
the $\ell_1$-ball of radius $C'_\FPP r$. Using that the latter contains the $\ell_\infty$-ball of radius $\frac{C'_\FPP r}{d}$, we obtain that
\begin{equation}
   \frac{C'_\FPP}{C_\FPP} \leq d.
   \label{eq:cfpprel}
\end{equation}

Given $\Q$, we can define $\Delta_\Q\geq 1$ as the smallest number such that 
\begin{equation}
   \mathcal{B}_\Q \Delta_\Q \supseteq \ball{\lambda}.
   \label{eq:deltaq}
\end{equation}
Equivalently, we have $\Delta_\Q = \sup_{x \in \ball{\lambda}} |x|_\Q$.
If $\Q$ is an exponential distribution of rate $\lambda$, we have $\Delta_\Q=1$.

Let $L_1$ be a large number, and fix $\alpha>1$ so that it satisfies the conditions in Proposition~\ref{pro:encapsulate2}. 
We let $k$ be an index for the scales.
For $k\geq 1$, once $L_k$ has been defined, we set 
\begin{align}
   R_k = \inf\lrc{r \geq L_k \colon \ball{r} \supset \left[-10d^2 \tfrac{C_\FPP'}{C_\FPP} L_k,10d^2 \tfrac{C_\FPP'}{C_\FPP} L_k\right]^d}.
   \label{eq:rk}
\end{align}
Also, for $k\geq1$, define
\begin{align}
   R_k^\enc = 2\alpha R_k \exp\lr{\frac{1+c_1}{2\epsilon}}
   \quad\text{and}\quad
   R_k^\couter = \frac{72k^2\Delta_\Q}{\epsilon}R_k^\enc,
   \label{eq:rencdef}
\end{align}
where $c_1$ is the constant in Proposition~\ref{pro:encapsulate2}. 
Since $1-\lambda > 2\epsilon$, we have that 
$\ball{R_k^\enc}$ contains all the passage times 
according to which the event in Proposition~\ref{pro:encapsulate2} with $r=R_k$ is measurable.
For $k\geq2$, let 
\begin{equation}
   L_k = \inf\lrc{\ell\geq 12 C_\FPP'R_{k-1}^\couter :  [-\ell/2,\ell/2]^d\supseteq \ball{100k^d R_{k-1}^\couter}}.
   \label{eq:lk}
\end{equation}
We then obtain the following bounds for $L_k$:
\begin{equation}
   200 C_\FPP k^d R_{k-1}^\couter 
   \leq L_k
   \leq 200 C_\FPP' k^d R_{k-1}^\couter 
   \label{eq:rl}
\end{equation}
and
\begin{equation}
   \frac{C^2_\FPP R_k}{10d^2 C'_\FPP}
   \leq L_k
   \leq \frac{C_\FPP R_k}{10d^2}.
   \label{eq:rl2}
\end{equation}
The first bound follows from~\eqref{eq:lk} and~\eqref{eq:cfpp}, and the fact that in~\eqref{eq:lk} $L_k$ is obtained via an infimum, so any cube containing
$\ball{100 k^d R_{k-1}^\couter}$ must have side length at least $L_k$.
The second bound follows from similar considerations, but applying~\eqref{eq:rk} and~\eqref{eq:cfpp}.

The intuition is that $L_k$ is the size of scale $k$, and $R_k$ is the radius of the clusters of $\eta^2(0)$ to
be treated at scale $k$.
The value of $R_k^\enc$ gives the radius inside which the encapsulation takes place; in the overview in Section~\ref{sec:overview} and in Figures~\ref{fig:poverview} and~\ref{fig:proof},
$R_k^\enc$ will be larger than the radius of each yellow ball so that each $\eta^2$ cluster treated at scale $k$ will be contained inside a ball of radius $R_k^\enc$. 
Regarding $R_k^\couter$, it represents a larger radius, which will be needed for 
the development of some couplings between scales; 
in the overview in Section~\ref{sec:overview} and in Figures~\ref{fig:poverview} and~\ref{fig:proof}, $R_k^\couter$ gives the radius of the red circles.

With the definitions above we obtain 
\begin{equation}
   R_k^\couter
   = \frac{144\alpha\exp\lr{\frac{1+c_1}{2\epsilon}}}{\epsilon}\Delta_\Q k^2R_k
   \leq \frac{1440 C_\FPP'\,d^2\,\alpha\exp\lr{\frac{1+c_1}{2\epsilon}}}{\epsilon C_\FPP^2}\Delta_\Q  k^2 L_k 
   \leq ck^{d+2} R_{k-1}^\couter,
   \label{eq:rkrel1}
\end{equation}
for some constant $c=c(d,\epsilon,\alpha,\Q)>\frac{288000 d^2 \alpha}{\epsilon}\exp\left(\frac{1+c_1}{2\epsilon}\right)\Delta_\Q$.
Iterating the above bound, we obtain
\begin{equation}
   R_k^\couter 
   \leq c^{k-1} \lr{k!}^{d+2} R_{1}^\couter
   \leq \lr{\frac{1440 C_\FPP'\,d^2\,\alpha\exp\lr{\frac{1+c_1}{2\epsilon}}\Delta_\Q}{\epsilon C_\FPP^2}}c^{k-1} \lr{k!}^{d+2} L_{1}.
   \label{eq:rkrel2}
\end{equation}
Using similar reasons we can see that 
\begin{equation}
   R_k \geq \frac{10d^2 L_k}{C_\FPP} 
   \geq 2000 d^2 k^d R_{k-1}^\couter
   = \frac{288000 d^2 \alpha k^{d+2}\exp\lr{\frac{1+c_1}{2\epsilon}}\Delta_\Q}{\epsilon} R_{k-1},
   \label{eq:rkrecursion}
\end{equation}
which allows us to conclude that 
$$
   R_k 
   \geq \tilde c^{k-1} (k!)^{d+2} R_1
   \geq \lr{100\frac{k^2}{\epsilon}}^{3d+6}
   \quad \text{ for all $k\geq 1$},
$$
where $\tilde c$ is a positive constant depending on $\alpha,\epsilon,d$ and $\Q$, and the last step follows for all $k\geq1$ by setting $L_1$ large enough. 

At each scale $k\geq 1$, tessellate $\mathbb{Z}^d$ into cubes of side-length $L_k$, producing a collection of disjoint cubes
\begin{equation}
   \{Q_k^\core(i)\}_{i\in\mathbb{Z}^d},
   \text{ where } Q_k^\core(i) = L_ki + [-L_k/2,L_k/2)^d.
   \label{eq:qcore}
\end{equation}
Whenever we refer to a \emph{cube} in $\mathbb{Z}^d$, we will only consider cubes of the form $\prod_{i=1}^d[a_i,b_i]$ for reals $a_i<b_i$, $i\in\{1,2,\ldots,d\}$. 
We will need cubes at each scale to overlap. We then define the following collection of cubes
$$
   \{Q_k(i)\}_{i\in\mathbb{Z}^d},
   \text{ where } Q_k(i) = L_ki + [-10dL_k,10dL_k]^d.
$$
We refer to each such cube $Q_k(i)$ of scale $k$ as a \emph{$k$-box}, and note that $Q_k(i)\supset Q_k^\core(i)$. 
One important property is that 
\begin{align*}
   \text{if a subset $A\subset \mathbb{Z}^d$ is completely contained inside a cube of side length 
   $18dL_k$,}\\
   \text{then $\exists i\in\mathbb{Z}^d$ such that $A\subset Q_k(i)$.}
\end{align*}

As described in the proof overview (see Section~\ref{sec:overview}), when going from scale $k$ to scale $k+1$, we will need to consider a slowed down version of $\eta^1$ and a 
sped up version of $\eta^2$.
For this reason we set $\epsilon_1=0$ and define for $k\geq 2$
$$
   \epsilon_k = \frac{\epsilon}{k^2}.
$$
Set $\lambda^1_1 = 1$ and $\lambda_1^2=\lambda$, 
and let $\zeta^1_1=\zeta^1$ and $\zeta^2_1=\zeta^2$ be the passage times used by $\eta^1$ and $\eta^2$, respectively.
For $k\geq 2$, 
define
$$
   \lambda^1_k = \exp\lr{-\sum\nolimits_{i=2}^k \epsilon_i}
   \quad\text{and}\quad
   \lambda^2_k = \lambda\,\exp\lr{\sum\nolimits_{i=2}^k \epsilon_i}.
$$
We have that $\lambda^1_k > \lambda^1_{k+1}$ and 
$\lambda^2_k < \lambda^2_{k+1}$ for all $k\geq 1$.
Also, note that 
$$
   \sum_{k=2}^{\infty} \epsilon_k 
   < \epsilon \int_1^\infty x^{-2}\,dx
   = \epsilon,
$$
which gives
$$
   \lambda^1_k 
   > e^{-\epsilon}
   > 1-\epsilon
   > \lambda e^\epsilon
   > \lambda^2_k
   \quad\text{for all $k\geq 1$},
$$
where the third inequality follows from the bound on $\epsilon$ via~\eqref{eq:delta}.

For each $k\geq 2$, consider two 
collections of passage times $\zeta_k^1$ and $\zeta_k^2$ on the edges of $\mathbb{Z}^d$, which are given by 
$\frac{\zeta^1}{\lambda^1_k}$ and $\frac{\zeta^2\lambda}{\lambda^2_k}$, respectively. 
These will be the passage times we will use in the analysis at scale $k$.
Note that, for any given $k$, the passage times of $\zeta_k^1$ are independent exponential random variables of 
parameter $\lambda_k^1$, while for the passage times of $\zeta_k^2$ we obtain that its limit shape is contained in $\ball{\lambda_k^2}$.

Moreover, up to a time scaling, having passage times $\zeta_k^1,\zeta_k^2$ is equivalent to having type 1 spreading at rate $1$, while type 2 spreads according to a random variable 
whose limit shape is contained in $\ball{\frac{\lambda_k^2}{\lambda_k^1}}$. Therefore, let $\lambda_k^\eff=\frac{\lambda_k^2}{\lambda_k^1}$ be the \emph{effective rate} of 
type~2 in comparison with that of type~1 at scale $k$. From now on, we will refer to the $\lambda^2_k$ as the rate of spread of type 2 at scale $k$, even if type~2 may not have exponential passage times.

We obtain that 
\begin{equation}
   \lambda 
   < \lambda_k^\eff
   \leq \frac{\lambda e^\epsilon}{e^{-\epsilon}}
   = \lambda e^{2\epsilon}
   < 1-2\epsilon.
   \label{eq:ratiolambda}
\end{equation}
Thus the effective rate of spread of type~2 is smaller than 1 at all scales.
We can also define the effective passage time of type~2 at scale $k$ as 
\begin{equation}
   \zeta_k^\eff = \frac{\zeta^2 \lambda \lambda_k^1}{\lambda_k^2}=\frac{\zeta^2 \lambda}{\lambda_k^\eff};
   \label{eq:zetaeff}
\end{equation}
in this way, at scale $k$, when employing Proposition~\ref{pro:encapsulate2}, we will take the passage times $\zeta^1_k,\zeta^2_k$ and scale time by a factor of 
$\lambda_k^1$, so that type~1 spreads according to the passage times $\zeta^1$ and type~2 spreads according to the passage times $\zeta_k^\eff$. 
Finally, for $k\geq 1$, define
\begin{equation}
   T_k^1 = R_k^\enc\lr{\frac{11-\lambda}{10}}^2\frac{1}{\lambda_k^1}
   \geq R_k^\enc\lr{\frac{11-\lambda_k^\eff}{10}}^2\frac{1}{\lambda_k^1}.
   \label{eq:tk1}
\end{equation}
Note that, using the passage times $\zeta^1_k,\zeta^2_k$, we have that $T_k^1$ represents the time required to run each encapsulation procedure at scale $k$ (before time is scaled by a factor of 
$\lambda_k^1$ as mentioned above).

\subsection{Definition of good boxes}\label{sec:good}

For each $Q_k(i)$, we will apply Proposition~\ref{pro:encapsulate2} to handle the situation where $Q_k(i)$ entirely contains a cluster of 
$\eta^2(0)$. 
At scale $k$ we will only handle the clusters that have not already been handled at a scale smaller than $k$.
By the relation between $L_k$ and $R_k$ in~\eqref{eq:rl2}, 
the cluster of $\eta^2$ inside $Q_k(i)$ will not start growing before
$\eta^1$ reaches the boundary of $L_ki+\ball{R_k}$. By the time $\eta^1$ reaches the boundary of $L_ki+\ball{R_k}$, 
$\eta^1$ must have crossed the boundary of $L_ki+\ball{\alpha R_k}$. 
(For the moment we assume that $L_ki+\ball{\alpha R_k}$ does not contain the origin, otherwise we will later consider that the origin has 
already been disconnected from infinity by $\eta^2$.) 
At this point we
apply Proposition~\ref{pro:encapsulate2} with $r=R_k$ and $\lambda=\lambda_k^\eff$, obtaining values $R$ and $T$ such that
\begin{equation}
   R\leq \alpha R_k \exp\lr{\frac{c_1}{1-\lambda_k^\eff}}
   \leq \alpha R_k \exp\lr{\frac{c_1}{2\epsilon}}
   \leq R_k^\enc,
   \label{eq:bdr}
\end{equation}
where the second inequality follows from~\eqref{eq:rencdef} and the last inequality follows from~\eqref{eq:ratiolambda},
and 
\begin{equation}
   T\leq R \left(\frac{11-\lambda_k^\eff}{10}\right)^2
   \leq R_k^\enc \left(\frac{11-\lambda_k^\eff}{10}\right)^2
   \leq \lambda_k^1 T_k^1,
   \label{eq:tr}
\end{equation}
where the
last two inequalities follow from~\eqref{eq:bdr} and~\eqref{eq:tk1}, respectively.
Note that in our application of Proposition~\ref{pro:encapsulate2} above time has been scaled by $\lambda_k^1$, since we apply it with type~1 (resp., type~2) spreading at rate 1 (resp., $\lambda_k^\eff$) 
instead of the actual rate $\lambda_k^1$ (resp., $\lambda_k^2$).
This is the reason why the term $\frac{1}{\lambda_k^1}$ appears in the definition of $T_k^1$ in~\eqref{eq:tk1}.
With this we get $\lambda_{k}^1 T_k^1$ in the right-hand side of~\eqref{eq:tr}, and the actual time that the encapsulation procedure takes is 
$\frac{1}{\lambda_k^1}T\leq T_k^1$.
At the moment we have not yet defined the sets $\{\Pi_\iota\}_\iota,\{\Pi'_\iota\}_\iota$; they will only be defined precisely in Section~\ref{sec:ci}.

Now let $E_k(i,x)$ with $x\in L_ki+\ball{\alpha R_k}\setminus \ball{\alpha R_k/2}$,
be the event in the application of Proposition~\ref{pro:encapsulate2} with the origin at $L_ki$,
$r=R_k$, passage times given by $\zeta^1,\zeta^\eff_k$, and $\eta^1$ starting from $x$.
Here $x$ represents the first vertex of $\bddo\lr{L_ki+ \ball{\alpha R_k}}$ occupied by $\eta^1$, from where the encapsulation of the cluster of $\eta^2$ inside 
$L_ki+\ball{R_k^\enc}$ will start.
Letting $B_k(i) =  \lr{L_ki+\ball{\alpha R_k}\setminus \ball{\alpha R_k/2}} \cup \bddo\lr{L_ki+\ball{\alpha R_k}}$, define
\begin{align*}
   G_k^\enc(i) \text{ to be the event that $E_k(i,x)$ holds for all $x\in B_k(i)$}.
\end{align*}
The event $G_k^\enc(i)$ implies that $\eta^1$ encapsulates $\eta^2$ inside $L_ki+\ball{R_k^\enc}$ during a time interval of length $T_k^1$,
unless $\eta^2$ ``invades'' $L_ki+\ball{R_k^\enc}$ from outside, that is, unless another cluster of $\eta^2$ starts growing and reaches the boundary of 
$L_ki+\ball{R_k^\enc}$ before $\eta^1$ manages to encapsulate $\eta^2$ inside $L_ki+\ball{R_k^\enc}$. 
(When we apply the above argument later in the proof, we will only try to encapsulate a cluster of $\eta^2$ at scale $k$ if the ball $L_ki+\ball{R_k^\couter}\supset L_ki+ \ball{R_k^\enc}$ does not 
intersect other balls being treated at the same scale. If there is another ball being treated at the same scale $k$ and intersecting  $L_ki+\ball{R_k^\couter}$, then these balls will be only treated at a larger scale, not allowing 
different clusters of $\eta^2$ of the same scale to interfere in each other's encapsulation.)

For each $i\in \mathbb{Z}^d$, define
\begin{align*}
   Q_k^\alpha(i) = L_ki + \ball{\alpha R_k},
   \quad
   Q_k^\enc(i) = L_ki + \ball{R_k^\enc},\\
   Q_k^\couter(i) = L_ki + \ball{R_k^\couter}
   \quad\text{and}\quad
   Q_k^\couters(i) = L_ki + \ball{R_k^\couter/3}.
\end{align*}
We will also define two other events $G_k^{1}(i)$ and $G_k^{2}(i)$, which will be measurable with respect to $\zeta^1_k,\zeta^2_k$ inside 
$Q_k^\couter(i)$.
For any $X\subset\mathbb{Z}^d$, let $\zeta^1_{k}|_X$ be the passage times that are equal to $\zeta^1_k$ inside 
$X$ and are equal to infinity everywhere else; define $\zeta^2_{k}|_X$ analogously.
Define the event $G_k^{1}(i)$ as
\begin{align*}
   \Big\{&D\big(\bddo Q_k^\enc(i),\bddi Q_k^\couters(i); \zeta_{k+1}^1|_{Q_k^\couter(i)}\big) 
   \geq T_k^1 + \sup\nolimits_{x\in \bddi Q_k^\couters(i)} D\big(\bddo Q_k^\alpha(i),x; \zeta_{k}^1|_{Q_k^\couter(i)}\big)\Big\}.
\end{align*}
The main intuition behind this event is that, during the encapsulation of 
a $(k+1)$-box, $\eta^1$ will need to perform some small local detours when encapsulating clusters of scale $k$ or smaller. We can capture this by 
using the slower passage times $\zeta_{k+1}^1$. If $G_k^{1}$ holds for the $k$-boxes that are traversed during the encapsulation of a 
$(k+1)$-box, then using the slower passage times $\zeta_{k+1}^1$ but ignoring the actual detours around $k$-boxes will only slow down $\eta^1$.

We also need to handle the case where the growth of $\eta^2$ is sped up by absorption of smaller scales. 
For $i\in\mathbb{Z}^d$, define 
\begin{align*}
   G_k^{2}(i)
   =\bigcap_{x\in \bddi Q_k^\couters(i)}\Big\{& D\big(x,\bddo Q_k^\enc(i); \zeta_{k}^2|_{Q_k^\couter(i)}\big)
   \geq \sup\nolimits_{y\in Q_k^\enc(i)}  D\big(x,y; \zeta_{k+1}^2|_{Q_k^\couter(i)}\big)\Big\}.
\end{align*}
Note that the event $G_k^2(i)$ implies the following. 
Let $x\in \bddi Q_k^\couters(i)$ be the first vertex of $Q_k^\couters(i)$ reached by $\eta^2$ from outside 
$Q_k^\couters(i)$. While $\eta^2$ travels from $x$ to $Q_k^\enc(i)$, the encapsulation of $Q_k^\enc(i)$
may start taking place. Then, $\eta^2$ can only get a sped up inside $Q_k^\enc(i)$ if 
$\eta^2$ enters $Q_k^\enc(i)$ before the encapsulation of $Q_k^\enc(i)$ is completed. However, under $G_k^2(i)$ and the passage times 
$\zeta_k^2$, the time that $\eta^2$ takes to go from $x$ to $Q_k^\enc(i)$ is larger than the time, under $\zeta_{k+1}^2$, that 
$\eta^2$ takes to go from $x$ to all vertices in $Q_k^\enc(i)$. Therefore, under $G_k^2(i)$, we can use the faster passage times 
$\zeta_{k+1}^2$ to absorb the possible sped up that $\eta^2$ may get by the cluster growing inside $Q_k^\enc(i)$.

For $i\in\mathbb{Z}^d$ and $k\geq1$, we define 
$$
   G_k(i)= G_k^\enc(i) \cap G_k^{1}(i) \cap G_k^{2}(i),
$$
and say that 
$$
   Q_k(i) \text{ is \emph{good} if } G_k(i) \text{ holds}.
$$
Hence, intuitively, $Q_k(i)$ being good means that $\eta^1$ successfully encapsulates the growing cluster of $\eta^2$ inside $Q_k(i)$, and this happens 
in such a way that the detour of $\eta^1$ during this encapsulation is faster than 
letting $\eta^1$ use passage times $\zeta_{k+1}^1$, and also the possible sped up that $\eta^2$ may get from clusters of 
$\eta^2$ coming from outside $Q_k(i)$ is slower than letting $\eta^2$ use passage times $\zeta_{k+1}^2$.

We now explain why in the definition of $G_k^1(i)$ and 
$G_k^2(i)$ we calculate passage times from $\partial Q_k^\couters(i)$ instead of from $\partial Q_k^\couter(i)$. 
The reason is that we had to define $G_k^1(i)$ and 
$G_k^2(i)$ in such a way that they are measurable with respect to the passage times inside $Q_k^\couter(i)$. 
We do this by forcing to use only passage times inside $Q_k^\couter(i)$. 
By using the distance between 
$\partial Q_k^\couter(i)$ and $\partial Q_k^\couters(i)$, we can ensure that this constraint does not change much the probability that the corresponding events occur.

\subsection{Probability of good boxes}\label{sec:prgood}
In this section we show that the events $G_k^\enc(i)$, $G_k^1(i)$ and $G_k^2(i)$, defined in Section~\ref{sec:good}, are likely to occur.

\begin{lemma}\label{lem:goodbox}
   There exist positive constants $L_0=L_0(d,\epsilon)$ and $c=c(d,\Q)$ 
   such that if $L_1\geq L_0$, then for any $k\geq 1$ and any $i\in \mathbb{Z}^d$ we have
   $$
      \PR\lr{G_k(i)}
      \geq 1 - \exp\lr{-c\lr{\epsilon\lambda R_k^\enc}^\frac{d+1}{2d+4}}.
   $$
   Moreover, the event $G_k(i)$ is measurable with respect to the passage times inside $Q_k^\couter(i)$.
\end{lemma}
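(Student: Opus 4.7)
The event $G_k(i)$ is the intersection of $G_k^\enc(i)$, $G_k^1(i)$, and $G_k^2(i)$, so my plan is to bound the probability of each complement separately and conclude by a union bound. The measurability statement comes almost for free: $G_k^1(i)$ and $G_k^2(i)$ are defined directly through passage times truncated to $Q_k^\couter(i)$, while $G_k^\enc(i)$ is measurable with respect to the passage times inside $L_ki+\ball{R_k^\enc\lr{(11-\lambda)/10}^2}$ by the measurability clause of Proposition~\ref{pro:encapsulate}, and this ball is contained in $Q_k^\couter(i)$ because $R_k^\couter=(72k^2/\epsilon)R_k^\enc$ dwarfs $1.21\,R_k^\enc$.

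For the encapsulation event, I would apply Proposition~\ref{pro:encapsulate} separately for each $x\in B_k(i)$, with origin translated to $L_ki$, $r=R_k$, and effective ratio $\alpha_x:=|x-L_ki|/R_k\in[\alpha/2,\alpha+1]$; choosing $\alpha$ to be at least twice the threshold in the proposition ensures each $\alpha_x$ satisfies its hypothesis. Using~\eqref{eq:ratiolambda} to obtain $\lambda_k^\eff(1-\lambda_k^\eff)\geq 2\epsilon\lambda$, each $E_k(i,x)^c$ has probability at most $\exp\lr{-c(\epsilon\lambda\alpha R_k)^{(d+1)/(2d+4)}}$. A union bound over the $O((\alpha R_k)^d)$ vertices in $B_k(i)$, absorbing the polynomial prefactor into the exponential by taking $L_1$ large, yields a bound of the required form once I observe that $\alpha R_k$ equals $R_k^\enc$ up to an $\epsilon$-dependent constant.

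The complements of $G_k^1(i)$ and $G_k^2(i)$ are controlled via Kesten's fluctuation estimate, Proposition~\ref{pro:kesten}. I would apply it with fluctuation parameter $\delta$ of order $\epsilon_{k+1}=\epsilon/(k+1)^2$, which is what the multiplicative rate gap $\lambda_k^1/\lambda_{k+1}^1=\lambda_{k+1}^2/\lambda_k^2=e^{\epsilon_{k+1}}$ can pay for. For $G_k^1(i)$, outside a Kesten bad event the defining inequality reduces to comparing a lower bound $(1-\delta)(R_k^\couters-R_k^\enc)/\lambda_{k+1}^1$ with $T_k^1+(1+\delta)R_k^\couters/\lambda_k^1$; since $R_k^\couters=(24k^2/\epsilon)R_k^\enc$ dominates both $R_k^\enc$ and $T_k^1\leq 3R_k^\enc$, the factor $e^{\epsilon_{k+1}}$ applied to the leading term $(24k^2/\epsilon)R_k^\enc/\lambda_k^1$ creates a surplus of order $(k^2/\epsilon)\cdot(\epsilon/k^2)\cdot R_k^\enc=\Theta(R_k^\enc)$, which comfortably absorbs $T_k^1$ and the fluctuation errors. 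The analysis of $G_k^2(i)$ is symmetric, with the rate gap acting on $\lambda_k^2$ instead of $\lambda_k^1$.

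The main obstacle I expect is verifying that $\delta$ of order $\epsilon/k^2$ can be taken at every scale $k$ without violating Kesten's restriction $\delta\gtrsim t^{-1/(2d+4)}(\log t)^{1/(d+2)}$; this is precisely what forces the hypothesis $L_1\geq L_0(d,\epsilon)$, because the lower bound following~\eqref{eq:rkrecursion} shows $R_k^\enc$ grows super-exponentially in $k$, so once $L_1$ is large enough the Kesten threshold stays well below $\epsilon/k^2$ uniformly in $k$. Once this is in place, the Kesten bounds on $(G_k^1(i))^c$ and $(G_k^2(i))^c$ decay faster than the encapsulation bound, so the latter dominates and the union bound yields the stated estimate on $\PR(G_k(i))$.
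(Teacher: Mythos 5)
Your proposal is correct and follows the paper's argument closely: the paper also decomposes $G_k(i)$ into $G_k^\enc(i)$, $G_k^1(i)$, $G_k^2(i)$, bounds $\PR(G_k^\enc(i)^\compl)$ via Proposition~\ref{pro:encapsulate} with~\eqref{eq:ratiolambda}, bounds $\PR(G_k^1(i)^\compl)$ and $\PR(G_k^2(i)^\compl)$ via Proposition~\ref{pro:kesten} with $\delta$ of order $\epsilon/k^2$ (stated as Lemmas~\ref{lem:g1} and~\ref{lem:g2}), and closes with a union bound and the measurability observation. You have simply inlined the two auxiliary lemmas into the proof of Lemma~\ref{lem:goodbox} rather than stating them separately; the substance, including the role of the super-exponential growth of $R_k$ in satisfying Kesten's lower bound on $\delta$ and hence forcing $L_1\geq L_0(d,\epsilon)$, is the same.
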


Before proving the lemma above, we state and prove two lemmas regarding the probability of the events $G_k^1(i)$ and $G_k^2(i)$.
\begin{lemma}\label{lem:g1}
   There exist positive constants $L_0=L_0(d,\epsilon)$ and $c=c(d)$ 
   such that if $L_1\geq L_0$, then for any $k\geq 1$ and any $i\in \mathbb{Z}^d$ we have
   $$
      \PR\lr{G_k^1(i)}\geq 
      1 - \exp\lr{-c \lr{R_k^\couter}^\frac{d+1}{2d+4}}.
   $$
\end{lemma}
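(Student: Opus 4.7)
The plan is to recast $G_k^1(i)$ as an inequality between two FPP distances measured in the same (faster) metric $\zeta^1_k$, control both via the shape theorem (Proposition~\ref{pro:kesten}), and verify that the multi-scale choices of $R_k^\enc$, $R_k^\couter$, $T_k^1$, and $\epsilon_{k+1}$ leave sufficient slack in the resulting deterministic inequality.

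First I would exploit the identity $\zeta^1_{k+1} = e^{\epsilon_{k+1}}\zeta^1_k$, which holds edge-by-edge (since $\zeta^1_k = \zeta^1/\lambda_k^1$ and $\lambda_k^1/\lambda_{k+1}^1 = e^{\epsilon_{k+1}}$), and therefore implies $D(\,\cdot\,; \zeta^1_{k+1}|_{Q_k^\couter(i)}) = e^{\epsilon_{k+1}} D(\,\cdot\,; \zeta^1_k|_{Q_k^\couter(i)})$. Writing
$$
A := D\bigl(\bddo Q_k^\enc(i),\bddi Q_k^\couters(i); \zeta_k^1|_{Q_k^\couter(i)}\bigr), \qquad B := \sup_{x\in \bddi Q_k^\couters(i)} D\bigl(\bddo Q_k^\alpha(i), x; \zeta_k^1|_{Q_k^\couter(i)}\bigr),
$$
the event $G_k^1(i)$ becomes $\{e^{\epsilon_{k+1}} A \geq T_k^1 + B\}$, so it suffices to lower-bound $A$ and upper-bound $B$ via shape-theorem estimates for rate-$\lambda_k^1$ FPP.

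Next I would apply Proposition~\ref{pro:kesten} at scale $t$ of order $R_k^\couter$ with fluctuation parameter $\delta_k := \epsilon_{k+1}/8$, and take a union bound over the polynomially many starting vertices on $\bddo Q_k^\enc(i)$ and $\bddo Q_k^\alpha(i)$. Using that these two sets are approximate norm-spheres of radii $R_k^\enc$ and $\alpha R_k$ centered at $L_k i$, while $\bddi Q_k^\couters(i)$ is an approximate norm-sphere of radius $R_k^\couter/3$, one obtains, outside an event of probability at most $\exp(-c(R_k^\couter)^{(d+1)/(2d+4)})$,
$$
A \;\geq\; (1-\delta_k)(R_k^\couter/3 - R_k^\enc)/\lambda_k^1 \quad \text{and} \quad B \;\leq\; (1+\delta_k)(R_k^\couter/3 - \alpha R_k)/\lambda_k^1.
$$
The lower bound on $A$ is automatic since restricting passage times to $Q_k^\couter(i)$ only increases distances. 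For the upper bound on $B$, a further application of the shape theorem shows that with the same probability the relevant unrestricted shortest paths stay well inside $Q_k^\couter(i)$, so the restriction does not affect $B$; the buffer factor of three between $R_k^\couter$ and $R_k^\couter/3$ in the definition of $Q_k^\couters(i)$ is built in for precisely this purpose. The constraint $\delta_k > c_1 t^{-1/(2d+4)}(\log t)^{1/(d+2)}$ from Proposition~\ref{pro:kesten} is satisfied uniformly in $k$ as soon as $L_1$ is large enough, since $\delta_k \sim (k+1)^{-2}$ while $R_k^\couter$ grows at least like $c^{k-1}(k!)^{d+2}L_1$.

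Finally I would verify algebraically that these estimates imply $e^{\epsilon_{k+1}} A \geq T_k^1 + B$. Using $e^{\epsilon_{k+1}}(1-\delta_k)-(1+\delta_k) \geq \epsilon_{k+1}/2$ together with $R_k^\couter/3 = (24 k^2/\epsilon) R_k^\enc$, the dominant contribution to $e^{\epsilon_{k+1}} A - B$ is at least $\frac{R_k^\couter}{6\lambda_k^1}\epsilon_{k+1} = \frac{12 k^2}{\lambda_k^1(k+1)^2} R_k^\enc \geq 3 R_k^\enc$, while the correction from the $e^{\epsilon_{k+1}}R_k^\enc/\lambda_k^1$ term is at most $e^{2\epsilon}R_k^\enc \leq (1+3\epsilon)R_k^\enc$ by~\eqref{eq:delta}. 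The resulting gap of at least $(2-3\epsilon)R_k^\enc$ strictly exceeds $T_k^1 \leq ((11-\lambda)/10)^2 R_k^\enc < 1.21\,R_k^\enc$ for $\epsilon$ small (which is already part of the standing assumption on $\epsilon$). The main obstacle is the restricted-passage-time subtlety in the upper bound for $B$, handled by the buffer built into the definition of $R_k^\couter/3$: the shape theorem then ensures that shortest paths remain strictly inside $Q_k^\couter(i)$ with high probability.
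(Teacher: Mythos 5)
Your proposal is essentially correct and follows the same strategy as the paper: recast $G_k^1(i)$ as a comparison between two FPP distances, control each via Kesten's shape-theorem estimate (Proposition~\ref{pro:kesten}) with a union bound over polynomially many vertices, and then verify a deterministic inequality exploiting the slack $\epsilon_{k+1}$ and the factor $72k^2/\epsilon$ built into $R_k^\couter$. Your rescaling $\zeta^1_{k+1}=e^{\epsilon_{k+1}}\zeta^1_k$ at the outset is a cosmetic reorganization of the same comparison the paper makes via $\tau_1,\tau_2$; the paper chooses $\delta=\epsilon/(120k^2)$ rather than your $\delta_k=\epsilon_{k+1}/8$, which is the same order in $k$ and makes no essential difference. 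One small point: your stated upper bound $B\leq(1+\delta_k)(R_k^\couter/3-\alpha R_k)/\lambda_k^1$ is slightly tighter than needed and slightly awkward to justify (the infimum over $y\in\bddo Q_k^\alpha(i)$ in the definition of $D$ is a set-to-point distance, so one would need to argue about nearby $y$); the paper sidesteps this by bounding $D(\bddo Q_k^\alpha(i),x)\leq D(L_ki,x)$ pointwise and applying the shape theorem from the single center $L_ki$, yielding $B\lesssim(1+\delta)R_k^\couter/(3\lambda_k^1)$. Since your own final computation drops the $\alpha R_k$ correction anyway, the weaker bound suffices and the algebra closes identically, so this is a presentational rather than a substantive difference.
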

\begin{proof}
   Set $\delta=\frac{\epsilon}{120 k^{2}}$.
   Define 
   $$
      \tau_1 = \lr{\frac{24k^2}{\epsilon}-1} \frac{1}{\lambda_{k+1}^1}R_k^\enc
              = \frac{1}{\lambda_{k+1}^1}\left(\frac{1}{3}R_k^\couter - R_k^\enc\right)
   $$
   and
   $$
      \tau_2 = \frac{24k^2}{\epsilon\lambda_k^1}R_k^\enc
       = \frac{1}{3\lambda_k^1}R_k^\couter.
   $$
   We will show that there exists a constant $c=c(d)>0$ such that 
   \begin{align}
      \PR\lr{
      D\big(\bddo Q_k^\enc(i), \bddi Q_k^\couters(i);\zeta_{k+1}^1|_{Q_k^\couter(i)}\big)
      \leq (1-\delta)\tau_1}
      \leq (R_k^\couter)^{3d} \exp\lr{-c (R_k^\couter)^{\frac{d+1}{2d+4}}},
      \label{eq:pr1}
   \end{align}
   and 
   \begin{align}
      \PR\lr{\sup\nolimits_{x\in \bddi Q_k^\couters(i)}
      D\big( \bddo Q_k^\alpha(i), x;\zeta_{k}^1|_{Q_k^\couter(i)}\big)
      \geq (1+\delta)\tau_2}&\nonumber\\
      \leq  \frac{c}{1-2\epsilon} (R_k^\couter)^{2d} \exp\lr{-c (R_k^\couter)^{\frac{d+1}{2d+4}}}.&
      \label{eq:pr2}
   \end{align}
   Using~\eqref{eq:pr1} and~\eqref{eq:pr2}, it remains to show that 
   $$
      (1-\delta)\tau_1
      \geq T_k^1 + (1+\delta)\tau_2.
   $$
   Note that 
   \begin{align*}
      (1-\delta)\tau_1
      &= (1-\delta)\lr{\frac{24k^2}{\epsilon}-1}\frac{\exp\lr{\epsilon (k+1)^{-2}}}{\lambda_k^1}R_k^\enc\\
      &= T_k^1 + \lr{(1-\delta)\lr{\frac{24k^2}{\epsilon}-1}\exp\lr{\epsilon (k+1)^{-2}}-\lr{\frac{11-\lambda}{10}}^2}\frac{R_k^\enc}{\lambda_k^1}.
   \end{align*}
   Thus we need to show that the last term in the right-hand side above is at least $(1+\delta)\tau_2$, which is equivalent to showing that 
   $$
      (1-\delta)\lr{\frac{24k^2}{\epsilon}-1}\exp\lr{\epsilon (k+1)^{-2}}-\lr{\frac{11-\lambda}{10}}^2
      \geq (1+\delta)\frac{24k^2}{\epsilon}.
   $$
   Rearranging the terms, the inequality above translates to 
   $$
      \frac{24k^2}{\epsilon}\lr{(1-\delta)\exp\lr{\epsilon(k+1)^{-2}}-1-\delta} \geq \lr{\frac{11-\lambda}{10}}^2 + (1-\delta)\exp\lr{\epsilon(k+1)^{-2}}.
   $$
   Using that $\exp\lr{\epsilon(k+1)^{-2}}\geq 1+\epsilon (k+1)^{-2}$ and then applying the value of $\delta$, we obtain that the left-hand side above is at least 
   \begin{align*}
      \frac{24k^2}{\epsilon}\lr{\epsilon(k+1)^{-2}-2\delta-\delta \epsilon(k+1)^{-2}}
      &= 24\lr{\frac{k^2}{(k+1)^{2}}-\frac{1}{60}-\frac{\epsilon}{120(k+1)^{2}}}\\
      &\geq 24\lr{\frac{1}{4}-\frac{1}{60}-\frac{1}{480}}
      >5.
   \end{align*}
   Hence, it now suffices to show that 
   $$
      5 \geq \lr{\frac{11-\lambda}{10}}^2 + (1-\delta)\exp\lr{\epsilon(k+1)^{-2}},
   $$
   which is true since the right-hand side above is at most 
   $\lr{\frac{11-\lambda}{10}}^2 + \exp\lr{\epsilon/4}\leq \lr{\frac{11}{10}}^2 + \exp\lr{1/4} \leq 3$.
   
   Now we turn to establish~\eqref{eq:pr1} and~\eqref{eq:pr2}.
   We start with~\eqref{eq:pr1}.
   First note that 
   $$
      R_k^\couter/3-R_k^\enc
      = \lr{\frac{24k^2}{\epsilon}-1}R_k^\enc
      = \tau_1\lambda_{k+1}^1.
   $$
   Recall the notation $S_t^\delta$ from Proposition~\ref{pro:kesten}, which is the (unlikely) event that at time $t$ first passage percolation of rate 1 
   does not contain $\ball{(1-\delta)t}$ or is not contained in $\ball{(1+\delta)t}$.
   Then using time scaling to go from passage times of rate $\lambda_{k+1}^1$ to passage times of rate $1$, and using the union bound on $x$, we obtain
   \begin{align*}
      &\PR\lr{
      D\big(\bddo Q_k^\enc(i), \bddi Q_k^\couters(i);\zeta_{k+1}^1|_{Q_k^\couter(i)}\big)
      \leq (1-\delta)\tau_1}\\
      &=\PR\lr{
      D\big(\bddo Q_k^\enc(i), \bddi Q_k^\couters(i);\zeta_{1}^1|_{Q_k^\couter(i)}\big)
      \leq (1-\delta)\tau_1\lambda_{k+1}^1}\\
      &\leq \sum\nolimits_{x\in \bddi Q_k^\couters(i)}\PQ\lr{S_{(1-\delta)\tau_1\lambda_{k+1}^1}^\delta}\\
      &\leq c_1 \lr{(1-\delta)\tau_1\lambda_{k+1}^1}^{3d} \exp\lr{-c_2 \lr{(1-\delta)\tau_1\lambda_{k+1}^1}^\frac{d+1}{2d+4}}\\
      &\leq c_1 (R_k^\couter)^{3d} \exp\lr{-c_3 (R_k^\couter)^\frac{d+1}{2d+4}},
   \end{align*}
   where in the first inequality we used that $(1+\delta)(1-\delta)\tau_1\lambda_{k+1}^1\leq \tau_1\lambda_{k+1}^1 = R_k^\couter/3-R_k^\enc$,
   and in the second inequality we applied Proposition~\ref{pro:kesten}.
   
   Now we turn to~\eqref{eq:pr2}. 
   We again use time scaling and the fact that $\tau_2\lambda_k^1=R^\couter_k/3$ to write
   \begin{align*}
      &\PR\lr{
         \sup\nolimits_{x\in \bddi Q_k^\couters(i)} D\Big( \bddo Q_k^\alpha(i), x;\zeta_{k}^1|_{Q_k^\couter(i)}\Big)
         \geq (1+\delta)\tau_2}\\
      &\leq \PR\lr{
         \sup\nolimits_{x\in \bddi Q_k^\couters(i)}D\Big( L_k i, x;\zeta_{k}^1|_{Q_k^\couter(i)}\Big)
         \geq (1+\delta)\tau_2}\\
      &\leq \sum\nolimits_{x\in \bddi Q_k^\couters(i)}\PQ\lr{S_{(1+\delta)\tau_2\lambda_k^1}^{\delta/2}}\\
      &\leq c_1 \lr{R_k^\couter}^{3d} \exp\lr{-c_2 \lr{R_k^\couter}^\frac{d+1}{2d+4}},
   \end{align*}
   where the second inequality follows since 
   $(1-\delta/2)(1+\delta)\tau_2\lambda_k^1\geq R_k^\couter/3$ for all $\delta\in[0,1]$.
   Moreover, 
   $(1+\delta/2)(1+\delta)\tau_2\lambda_k^1< \frac{2 R_k^\couter}{3}$, implying that $S_{(1+\delta)\tau_2\lambda_k^1}^{\delta/2}$ is 
   measurable with respect to the passage times inside $Q_k^\couter(i)$.
   Finally, the last step of the derivation above follows from Propositon~\ref{pro:kesten}.
\end{proof}

The next lemma shows that $G_k^2(i)$ occurs with high probability.
\begin{lemma}\label{lem:g2}
   There exist positive constants $L_0=L_0(d,\epsilon)$ and $c=c(d,\nu)$ 
   such that if $L_1\geq L_0$, then for any $k\geq 1$ and any $i\in \mathbb{Z}^d$ we have
   $$
      \PR\lr{G_k^2(i)}\geq 
      1 - \exp\lr{-c \lr{R_k^\couter}^\frac{d+1}{2d+4}}.
   $$
\end{lemma}
\begin{proof}
   Set $\delta=\frac{\epsilon}{20 k^{2}}$ and fix an arbitrary $x\in \bddi Q_k^\couters(i)$. 
   Define the smallest distance between $x$ and $Q_k^\enc(i)$ with respect to the norm $\Q$ as 
   $$
      m = \min_{y \in \bddi Q_k^\enc(i)} |x-y|_\Q.
   $$
   Since $\mathcal{B}_\Q\subseteq \ball{\lambda}$, we have that 
   $$
      m \geq \frac{1}{\lambda} \lr{\frac{R_k^\couter}{3}- R_k^\enc}.
   $$
   Under the passage times $\zeta_k^2$, the time it takes to reach $Q_k^\enc(i)$ from $x$ is roughly $m \frac{\lambda}{\lambda_k^2}$. Therefore, we define 
   $$
      \tau_1 = \frac{m\lambda}{\lambda_k^2},
   $$
   and will show later that there exists a constant $c'>0$ such that, uniformly over $x$,  
   \begin{align}
      &\PR\lr{
      D\big( x, \bddo Q_k^\enc(i);\zeta_{k}^2|_{Q_k^\couter(i)}\big)
      \leq (1-\delta)\tau_1}
      \leq \exp\lr{-c' (R_k^\couter)^{\frac{d+1}{2d+4}}}.
      \label{eq:pr21}
   \end{align}
   Now, under the faster passage times $\zeta_{k+1}^2$, the time it takes to reach $Q_k^\enc(i)$ from $x$ is roughly $m \frac{\lambda}{\lambda_{k+1}^2}$. 
   Let $x'\in\bddi Q_k^\enc(i)$ be the first vertex of $Q_k^\enc(i)$ reached from $x$. 
   Note that 
   $$
      x'+\mathcal{B}_\Q \Delta_\Q \frac{2 R_k^\enc}{\lambda}
      \supseteq x'+\ball{\lambda} \frac{2 R_k^\enc}{\lambda}
      = x'+\ball{2 R_k^\enc}
      \supseteq Q_k^\enc(i).
   $$
   Under the passage times $\zeta_{k+1}^2$, which is a scale of $\zeta^2$ by a factor of $\frac{\lambda}{\lambda_{k+1}^2}$, 
   the time until $x'+\mathcal{B}_\Q \Delta_\Q \frac{2 R_k^\enc}{\lambda}$ is fully occuppied starting from $y$ is roughly 
   $\Delta_\Q \frac{2 R_k^\enc}{\lambda_{k+1}^2}$. Therefore, we set
   $$
      \tau_2 = m \frac{\lambda}{\lambda_{k+1}^2} + \Delta_\Q \frac{2 R_k^\enc}{\lambda_{k+1}^2},
   $$
   and will show that there exists a constant $c''>0$ such that 
   \begin{align}
      \PR\lr{
      \sup\nolimits_{y\in \bddo Q_k^\enc(i)} D\Big( x, y;\zeta_{k+1}^2|_{Q_k^\couter(i)}\Big)
      \geq (1+\delta)\tau_2}
      \leq  \exp\lr{-c'' (R_k^\couter)^{\frac{d+1}{2d+4}}}.
      \label{eq:pr22}
   \end{align}
   Assuming~\eqref{eq:pr21} and~\eqref{eq:pr22} for the moment, it remains to show that 
   \begin{equation}
      (1-\delta)\tau_1
      \geq (1+\delta)\tau_2.
      \label{eq:pr2122}
   \end{equation}
   Replacing $\lambda_{k+1}^2$ with $\lambda_k^2 \exp(\epsilon(k+1)^{-2})$ in the definition of $\tau_2$,~\eqref{eq:pr2122} follows if we show that 
   \begin{align*}
      (1-\delta)m \lambda 
      \geq (1+\delta)\exp\lr{-\epsilon (k+1)^{-2}}\lr{m\lambda + 2 \Delta_\Q R_k^\enc}.
   \end{align*}
   First note that 
   $$
      2\Delta_\Q R_k^\enc 
      =  \frac{2\epsilon}{24 k^2} \cdot \frac{R_k^\couter}{3}
      \leq \frac{3\epsilon}{24 k^2} \lr{\frac{R_k^\couter}{3} - R_k^\enc}
      \leq \frac{3\epsilon}{24 k^2} m\lambda
      = \frac{5\delta}{2} m\lambda,
   $$
   where the first inequality follows by the definition of $R_k^\couter$ in~\eqref{eq:rencdef}.
   So now it suffices to show that 
   \begin{align*}
      1-\delta
      \geq (1+\delta)\exp\lr{-\epsilon (k+1)^{-2}}\lr{1+ \frac{5\delta}{2}}.
   \end{align*}
   Rearranging gives that $\frac{1-\delta}{(1+\delta)(1+5\delta/2)}\geq \exp\lr{-\epsilon (k+1)^{-2}}$. The left-hand side is at least 
   $(1-\delta)^2(1-5\delta/2)\geq 1- \frac{9\delta}{2}$.
   Using that $e^{-a}\leq 1-a+a^2/2$ for all $a\geq 0$,~\eqref{eq:pr2122} holds if the following is true
   $$
      \frac{9\delta}{2} 
      \leq \frac{\epsilon}{(k+1)^2}\lr{1 - \frac{\epsilon}{2(k+1)^2}}.
   $$
   Using the value of $\delta$, we are left to showing 
   $$
      \frac{9}{40} \leq \frac{k^2}{(k+1)^2}\lr{1 - \frac{\epsilon}{2(k+1)^2}},
   $$
   which is true since the right-hand side is at least $\frac{1}{4} \cdot \lr{1-\frac{\epsilon}{8}} \geq \frac{1}{4} \cdot \frac{15}{16}$.
   This establishes~\eqref{eq:pr2122}.
   
   Now we turn to establish~\eqref{eq:pr21} and~\eqref{eq:pr22}, which essentially follow from Proposition~\ref{pro:kesten}. 
   We start with~\eqref{eq:pr21}.
   Scaling the passage times $\zeta_{k^2}$ by $\frac{\lambda_k^2}{\lambda}$ we obtain passage times distributed as $\Q$. 
   Hence,
   \begin{align*}
      \PR\lr{
      D\Big( x, \bddo Q_k^\enc(i);\zeta_{k}^2|_{Q_k^\couter(i)}\Big)
      \leq (1-\delta)\tau_1}
      &\leq \PQ^{\Q}\lr{S_{\tau_1\lambda_k^2/\lambda}^\delta}\\
      &\leq \exp\lr{-c' (R_k^\couter)^{\frac{d+1}{2d+4}}}.
   \end{align*}   
   The same reasoning holds for~\eqref{eq:pr22}, which gives
   \begin{align*}
      \PR\lr{
      \sup\nolimits_{y\in \bddo Q_k^\enc(i)} D\Big( x, y;\zeta_{k+1}^2|_{Q_k^\couter(i)}\Big)
      \geq (1+\delta)\tau_2}
      &\leq  \PQ^{\Q}\lr{S_{\tau_2\lambda_{k+1}^2/\lambda}^{\delta}}\\
      &\leq \exp\lr{-c'' (R_k^\couter)^\frac{d+1}{2d+4}}.
   \end{align*}
   Then the lemma follows by taking the union bound over $x$, and using the fact that $R_k^\couter$ is very large at all scales so that the extra term obtained from the union bound
   can be absorbed in the constant $c$.
\end{proof}

\begin{proof}[Proof of Lemma~\ref{lem:goodbox}]
   Proposition~\ref{pro:encapsulate2} gives that $G_k^\enc(i)$ can be defined so that it is measurable with respect to the passage times inside 
   \begin{align*}
      L_ki+\ball{\alpha R_k \exp\lr{\frac{c_1}{1-\lambda_k^\eff}} \lr{\frac{11-\lambda_k^\eff}{10}}^2}
      &\subseteq L_ki+\ball{\alpha R_k \exp\lr{\frac{c_1}{2\epsilon}}\lr{\frac{11}{10}}^2}\\
      &\subseteq L_ki+\ball{R_k^\enc}.
   \end{align*}
   Moreover, Proposition~\ref{pro:encapsulate} gives a constant $c_2>0$ so that, for all large enough $L_1$, we have
   \begin{align*}
      \PR\lr{G_k^\enc(i)} 
      &\geq 1- \sum_{x\in B_k(i)} \exp\lr{-c_2 \lr{\lambda_k^\eff\lr{1-\lambda_k^\eff}\alpha R_k}^\frac{d+1}{2d+4}}\\
      &\geq 1- \exp\lr{-c \lr{2\epsilon \lambda\alpha R_k}^\frac{d+1}{2d+4}},
   \end{align*}
   where the last step follows by applying the bounds in~\eqref{eq:ratiolambda} and $c$ is a positive constant.
   By definition, the events $G_k^1$ and $G_k^2(i)$ are measurable with respect to the passage times inside $L_ki+\ball{R_k^\couter}$. So 
   the proof is completed by using the bounds in Lemmas~\ref{lem:g1} and~\ref{lem:g2}.
\end{proof}

\subsection{Contagious and infected sets}\label{sec:ci}
%
As discussed in the proof overview in Section~\ref{sec:overview}, 
for each scale $k$, we will define a set $C_k\subset\mathbb{Z}^d$ as the set of \emph{contagious} vertices at scale $k$, and 
also define a set $I_k\subset\mathbb{Z}^d$ as the set of \emph{infected} vertices at scale $k$. The main intuition behind such sets is that $C_k$ represents 
the vertices of $\mathbb{Z}^d$ that need to be handled at scale $k$ or larger, whereas $I_k$ represents the vertices of $\mathbb{Z}^d$ that may be taken by $\eta^2$
at scale $k$. In particular, we will show that the vertices of $\mathbb{Z}^d$ that will be occupied by $\eta^2$ are contained in $\bigcup_{k\geq 1}I_k$.

At scale $1$ we set the contagious vertices as those initially taken by $\eta^2$; that is, 
$$
   C_1 = \eta^2(0).
$$
All clusters of $C_1$ that belong to good 1-boxes and that are not too close to contagious clusters from other 1-boxes
will be ``cured'' by the encapsulation process 
described in the previous section. 
The other vertices of $C_1$ will become contagious vertices for scale 2, together with the vertices belonging to bad 1-boxes.
Using this, define $C_k^\bad$ as the following subset of the contagious vertices:
\begin{equation}
    \lrc{x \in C_{k} \colon \text{for all $i$ with $x\in Q_k(i)$ we have $iL_k + \ball{3R_k^\couter} \cap \lr{C_k \setminus Q_k(i)}\neq\emptyset$}}.
    \label{eq:cbad}
\end{equation}
Intuitively, $C_k^\bad$ is the set of contagious vertices that cannot be cured at scale $k$ 
since they are not far enough from other contagious vertices in other $k$-boxes. 
Now for the vertices in $C_k\setminus C_k^\bad$, the definition of $C_k^\bad$ gives that we can select a set $\mathcal{I}_k\subset \mathbb{Z}^d$ representing 
$k$-boxes such that 
for each $x\in C_k\setminus C_k^\bad$ there exists a unique $i\in\mathcal{I}_k$ for which $x\in Q_k(i)$, and for each pair 
$i,j\in \mathcal{I}_k$, we have $Q_k^\couter(i)\cap Q_k^\couter(j)=\emptyset$.
Then, given $C_k$, we define $I_k$ as the set of vertices that can be taken by $\eta^2$ during the encapsulation of the good $k$-box, which is more precisely given
by 
$$
   I_k = \lrc{Q_k^\enc(i) \colon Q_k(i) \text{ is good and } i \in \mathcal{I}_k}.
$$
We then define inductively 
\begin{align}
   C_{k+1} = C_{k}^\bad \cup \lrc{Q_{k}^\couter(i) \colon i \in \mathcal{I}_k \text{ and } Q_{k}(i) \text{ is bad}}.
   \label{eq:defck}
\end{align}

The lemma below gives that if the contagious sets of scales larger than $k$ 
are all empty, then $\eta^2$ must be contained inside $\bigcup_{j=1}^{k-1}I_j$.
\begin{lemma}\label{lem:inf}
   Let $A\subset \mathbb{Z}^d$ be arbitrary. Then, for any $k\geq1$, either we have that 
   \begin{equation}
      \text{there exists $j> k$ and $i\in\mathbb{Z}^d$ with $Q_j^\couter(i)\cap A\neq \emptyset$ for which $Q_j^\core(i)\cap C_j \neq \emptyset$},
      \label{eq:imp1}
   \end{equation}
   or
   \begin{equation}
      \eta^2(t)\cap A \subset \bigcup\nolimits_{j=1}^{k} I_j \quad\text{for all $t\geq0$}.
      \label{eq:imp2}
   \end{equation}
\end{lemma}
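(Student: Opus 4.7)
The plan is to prove the lemma by induction on $k$, relying on the key structural fact that at every scale $j$ and every $i \in \mathcal{I}_j$ with $Q_j(i)$ good, the encapsulation event $G_j^\enc(i)$ together with the robustness events $G_j^1(i)$ and $G_j^2(i)$ force every $\eta^2$-cluster originating from a contagious vertex in $Q_j(i) \cap (C_j \setminus C_j^\bad)$ to stay inside $Q_j^\enc(i) \subseteq I_j$ for all time. This is the content of Section~\ref{sec:good} interpreted dynamically.

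For the base case $k=1$, suppose~\eqref{eq:imp1} fails. Since the cores $\{Q_j^\core(i)\}_i$ tile $\mathbb{Z}^d$ at every scale $j$, each $x \in A$ lies in a unique core $Q_j^\core(i_x)$, whose outer box $Q_j^\couter(i_x)$ necessarily meets $A$. The failure of~\eqref{eq:imp1} then forces $x \notin C_j$ for every $j \geq 2$. Unwinding the recursive definition $C_{j+1} = C_j^\bad \cup \bigcup\{Q_j^\couter(i') : Q_j(i') \text{ bad}\}$ backwards shows that $x$ also avoids $C_1^\bad$ and avoids $Q_1^\couter(i')$ for every bad $Q_1(i')$. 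Consequently, every $C_1$-seed whose $\eta^2$-spread is capable of reaching $x$ must be assigned, via $\mathcal{I}_1$, to some good $1$-box $Q_1(\iota)$; the events $G_1^\enc(\iota)$, $G_1^1(\iota)$, $G_1^2(\iota)$ then confine the corresponding cluster to $Q_1^\enc(\iota) \subseteq I_1$, so $x \in \eta^2(t)$ implies $x \in I_1$, which is~\eqref{eq:imp2}.

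For the inductive step, assume the lemma at level $k-1$ and consider level $k$. If~\eqref{eq:imp2} already holds at $k-1$, then it holds at $k$ a fortiori. Otherwise the induction supplies $j_0 > k-1$ and $i^\ast$ witnessing~\eqref{eq:imp1} at level $k-1$. In the case $j_0 > k$ the same witness realises~\eqref{eq:imp1} at level $k$. The remaining case is $j_0 = k$, where I would repeat the case split of the base case one scale up: if $Q_k(i^\ast)$ is good and the relevant contagious vertex is not in $C_k^\bad$, then $G_k^\enc(i^\ast) \cap G_k^1(i^\ast) \cap G_k^2(i^\ast)$ absorbs the corresponding $\eta^2$-cluster into $Q_k^\enc(i^\ast) \subseteq I_k$, contributing to~\eqref{eq:imp2} at level $k$; otherwise, the recursive definition of $C_{k+1}$ places this contagion in $C_{k+1}$, and since $x \in A$ the corresponding core at scale $k+1$ meets $C_{k+1}$ while its outer cube meets $A$, which is~\eqref{eq:imp1} at level $k$.

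The main obstacle, and the reason the events $G_k^1$ and $G_k^2$ were introduced in Section~\ref{sec:good}, is that inside $Q_k^\couter(i^\ast)$ many smaller-scale encapsulations happen in parallel with the scale-$k$ one; these can a priori slow down $\eta^1$ by forcing detours around smaller $\eta^2$-clusters and accelerate $\eta^2$ by absorbing neighbouring $\eta^2$-clusters, which would break a naive application of Proposition~\ref{pro:encapsulate} at scale $k$. The construction of $G_k^1(i^\ast)$ and $G_k^2(i^\ast)$ couples the true behaviour of $\eta^1, \eta^2$ inside $Q_k^\couter(i^\ast)$ to independent first-passage percolations at the slightly shifted rates $\lambda_{k+1}^1$ and $\lambda_{k+1}^2$, and combining this with the inductive containment of the smaller scales (which is what the induction hypothesis supplies inside $Q_k^\couter(i^\ast)$) is exactly what closes the induction.
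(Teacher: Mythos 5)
Your proof attempts an induction on $k$ in the lemma statement, whereas the paper's proof is structured quite differently: it first establishes the unconditional global containment $\eta^2(t)\subseteq\bigcup_{\iota\geq1}I_\iota$ by a scale-by-scale analysis, and then separately observes that each $Q_j^\enc(i)\subset I_j$ is contained in $Q_j^\couter(i')$ for some $i'$ with $Q_j^\core(i')\cap C_j\neq\emptyset$, so that the hypothesis (negation of~\eqref{eq:imp1}) forces $\bigcup_{j>k}I_j$ to be disjoint from $A$. Your induction on $k$ does not close, and there are two concrete gaps.

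First, the inductive step in the case $j_0=k$ is not exhaustive. You take the single witness $(k,i^\ast)$ supplied by the induction hypothesis and split on whether its contagion is absorbed into $I_k$ or pushed into $C_{k+1}$. The second branch indeed yields a witness for~\eqref{eq:imp1} at level $k$. But the first branch merely says one particular contagion is confined; it does not show $\eta^2(t)\cap A\subseteq\bigcup_{j\leq k}I_j$, which is a containment of the entire set. There may be many contagious clusters at scales $\leq k$ interacting with $A$, and your argument tracks only the one handed down from level $k-1$. The same problem is present in the base case: the assertion that ``every $C_1$-seed whose $\eta^2$-spread is capable of reaching $x$ must be assigned to some good $1$-box'' does not follow from the hypothesis, which only constrains contagion cores at scales $\geq2$ whose outer boxes meet $A$; it says nothing directly about which seeds can reach a given $x\in A$ under the dynamics.

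Second, and more fundamentally, you never actually establish the global containment $\eta^2(t)\subseteq\bigcup_j I_j$ (nor the quantified version needed for~\eqref{eq:imp2}). You correctly flag that the real difficulty is the cross-scale interaction: during the scale-$j$ encapsulation, $\eta^1$ must detour around smaller-scale encapsulated regions, and $\eta^2$ may absorb smaller clusters from outside. But you only state that ``combining $G_k^1,G_k^2$ with the inductive containment of smaller scales closes the induction'' without giving the argument. The paper's proof is non-trivial precisely here: it invokes Remark~\ref{rem:encapsulate}, replaces each $A_n$ by the enlarged $\bar A_n$ (points within distance $2R_{j-1}^\couter$ of $A_n$), produces a set $\tilde A_n$ disconnecting $A_n$ from $\bddo\bar A_n$, and verifies the quantitative condition~\eqref{eq:remcond} using~\eqref{eq:rkrecursion} to control $R_{j-1}^\couter$ against $R_j$. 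Without carrying out this coupling argument, the claim that the scale-$k$ encapsulation succeeds in the presence of smaller scales is unsupported.
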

\begin{proof}
   We will assume that~\eqref{eq:imp1} does not occur; that is, 
   \begin{equation}
      \text{the set } \bigcup\nolimits_{j>k}\bigcup\nolimits_{i\colon Q_j^\core(i)\cap C_j \neq \emptyset} Q_j^\couter(i) 
      \text{ does not intersect $A$}.
      \label{eq:imp1p}
   \end{equation}
   The lemma will follow by showing that the above implies~\eqref{eq:imp2}.
   
%
   We start with scale 1.
   Recall that $C_1$ contains all elements of $\eta^2(0)$.
   Then, all elements of $C_1\setminus C_1^\bad$ are handled at scale 1. 
   Let $i\in\mathcal{I}_1$, so $Q_1(i)$ intersects $C_1\setminus C_1^\bad$. 
   If $Q_1(i)$ is a good box, the passage times inside $Q_1^\enc(i)$ are such that 
   $\eta^1$ encapsulates $\eta^2$ within $Q_1^\enc(i)$ unless another cluster of $\eta^2$ enters $Q_1^\enc(i)$ from outside. 
   When the encapsulation succeeds, we have that the cluster of $\eta^2$ growing inside $Q_1^\enc(i)$ never 
   exits $Q_1^\enc(i)\subset I_1$. 
   
   Before proceeding to the proof for scales larger than $1$, we explain the possibility that the encapsulation above does not succeed because another cluster of $\eta^2$ (say, from $Q_1(j)$) enters $Q_1^\enc(i)$ from outside. 
   Note that if $Q_1^\couter(j)\cap Q_1^\couter(i) \neq \emptyset$, then the two clusters are not handled at scale 1: they will be handled together at a higher scale.
   Now assume that $Q_1^\couter(j)$ and $Q_1^\couter(i)$ are disjoint and do not intersect any other region $Q_1^\couter$ from a contagious site. 
   Thus both $Q_1(i)$ and $Q_1(j)$ are handled at scale 1.
   If they are both good, the encapsulations succeed within $Q_1^\enc(i)$ and $Q_1^\enc(j)$, and do not interfere with each other.
   Assume that $Q_1(i)$ is good, but $Q_1(j)$ is bad. In this case, we will make $Q_1^\couter(j)$ to be contagious for scale $2$, but up to scale $1$ this does not interfere with the encapsulation within
   $Q_1^\enc(i)$ because these two regions are disjoint. The encapsulation of $Q_1^\couter(j)$ will be treated at scale $2$ or higher, and the fact that 
   $Q_1^\couter(j)\cap Q_1^\couter(i)=\emptyset$ will be used to allow a coupling argument between scales. 
  
   We now explain the analysis for a scale $j\in\{2,3,\ldots,k\}$, assuming that we have carried out the analysis until scale $j-1$.
   Thus, we have showed that all contagious vertices successfully handled at scale smaller than $j$ are contained inside $I_1\cup I_2 \cup\cdots\cup I_{j-1}$.
   Consider a cell $Q_j(i)$ of scale $j$ with $i\in \mathcal{I}_j$. 
   During the encapsulation of $\eta^2$ inside $Q_j^\enc(i)$, it may 
   happen that $\eta^1$ advances through a cell $Q_{j-1}(i')$ that was treated at scale $j-1$; 
   that is, $i'\in\mathcal{I}_{j-1}$. (For simplicity of the discussion, we assume here that this cell is of scale $j-1$, but 
   it could be of any scale $j'\leq j-1$.)
   Note that $Q_{j-1}(i')$ must be good for scale $j-1$ because otherwise cell $i$ would not be treated at scale $j$.
   The fact that $Q_{j-1}(i')$ is good implies that the time $\eta^1$ takes to go from $\bddi Q_{j-1}^\couters(i')$ to all points in 
   $\bddi Q_{j-1}^\enc(i')$, therefore encapsulating $Q_{j-1}(i')$, is 
   smaller than the time given by the passage times $\zeta_j^1$. 
   Moreover, $Q_{j-1}(i')$ being good implies that the time $\eta^2$ takes to go from $\bddi Q_{j-1}^\couters(i')$ to any point in 
   $\bddo Q_{j-1}^\enc(i')$ is larger 
   than the time given by the passage times $\zeta_j^2$. This puts us in the context of Proposition~\ref{pro:encapsulate2}, where the sets $\{\Pi_\iota\}_\iota$ are given by the clusters 
   of $\bigcup_{j''=1}^{j-1} \bigcup_{i'' \in \mathcal{I}_{j''}} Q_{j''}^\couter(i'')$, and for each $\iota$, the set $\Pi_\iota'\subset \Pi_\iota$ is given by the union of $Q_{j''}^\enc(i'')$ over all 
   $j'',i''$ for which $Q_{j''}^\couter(i'')\subset \Pi_\iota$.
   Therefore, under the event that all the cells involved in the definition of $\{\Pi_\iota\}_\iota$ are good, Proposition~\ref{pro:encapsulate2} gives 
   $\eta^2$ cannot escape the set $\bigcup_{\iota=1}^j I_\iota$, after all contagious vertices of scale at most $j$ have been analyzed. 
   Therefore, inductively we obtain that $\eta^2(t)\subset \bigcup_{\iota=1}^\infty I_\iota$.
   
   For scales larger than $k$, we will use that~\eqref{eq:imp1p} holds. Since for any scale $j$ and any $i\in\mathbb{Z}^d$ we have that 
   $$
      \bigcup\nolimits_{i': Q_j(i')\cap Q^\core_j(i)\neq \emptyset} Q_j^\enc(i')\subset Q_j^\couter(i),
   $$
   we obtain
   $$
      \bigcup\nolimits_{j>k}I_j 
      = \bigcup\nolimits_{j>k}\bigcup\nolimits_{i \in \mathcal{I}_j}Q_j^\enc(i)
      \subset \bigcup\nolimits_{j>k}\bigcup\nolimits_{i\colon Q_j^\core(i)\cap C_j \neq \emptyset}Q_j^\couter(i).
   $$
   This and~\eqref{eq:imp1p} give that $\bigcup\nolimits_{j>k}I_j$ does not intersect $A$, hence $\eta^2(t)\cap A\subset \bigcup_{\iota=1}^k I_\iota$.
\end{proof}

\subsection{Recursion}\label{sec:recursion}

Define 
$$
   \rho_k(i) = \PR\lr{Q_k^\core(i)\cap C_k\neq \emptyset}.
$$
Recall that $Q_k^\core(i)$ are disjoint for different $i\in\mathbb{Z}^d$, as defined in~\eqref{eq:qcore}.
Define also 
$$
   q_k = \exp\lr{-c\lr{R_k^\couter}^\frac{d+1}{2d+4}},
$$
where $c$ is the constant in Lemma~\ref{lem:goodbox} so that for any $k\in\mathbb{N}$ and $i\in\mathbb{Z}^d$, we have $\PR\lr{G_k(i)}\geq 1-q_k$.

By the definition of $C_k$ from~\eqref{eq:defck}, 
in order to have $Q_k^\core(i)\cap C_k\neq \emptyset$ it must happen that either 
\begin{equation}
   \exists j\in\mathbb{Z}^d \colon Q_{k-1}(j) \text{ is bad and } Q_{k-1}^\couter(j)\cap Q_k^\core(i)\neq\emptyset
   \label{eq:cond1}
\end{equation}
or
\begin{align}
   \exists x,y\in\mathbb{Z}^d\colon 
   x,y \in C_{k-1},\;
   x\in Q_{k}^\core(i),\;
   y \not\in Q_{k-1}(\iota),\;
   y \in \iota L_{k-1} + \ball{3R_{k-1}^\couter},
   \label{eq:cond2}
\end{align}
where $\iota$ is the unique number such that $x\in Q_{k-1}^\core(\iota)$.
The condition above holds by the following. If~\eqref{eq:cond1} does not happen, 
then there must exist a $x\in C_{k-1}\cap Q_k^\core(i)$ that was not treated at scale $k-1$; that is, $x\in C_{k-1}^\bad$. 
Then, by the definition of $C^\bad_{k-1}$, it must be the case that there exists a $y$ satisfying the conditions in~\eqref{eq:cond2}.
The values $x,y$ as in~\eqref{eq:cond2} must satisfy
\begin{equation}
   \frac{(10d^2-1/2)}{C_\FPP'}L_{k-1}\leq |x-y|\leq \frac{L_{k-1}}{2 C_\FPP } + 3 R_{k-1}^\couter.
   \label{eq:xy}
\end{equation}

\begin{lemma}\label{lem:measure}
   For any $k\geq 2$ and any $i\in \mathbb{Z}^d$, 
   define the super cell
   \begin{equation}
      Q_k^\super(i) = \bigcup_{x\in Q_k^\core(i)} \lr{x+\ball{3R_{k-1}^\couter+R_{k-1}}};
      \label{eq:qksuper}
   \end{equation}
   for $k=1$, set $Q_1^\super(i) = Q_1^\core(i)$.
   Then the event $\lrc{Q_k^\core(i)\cap C_k\neq \emptyset}$ is measurable with respect to 
   $Q_k^\super(i)$.
\end{lemma}
\begin{proof}
   The theorem is true for $k=1$ since $\lrc{Q_1^\core(i)\cap C_1\neq \emptyset}$ is equivalent to 
   $\lrc{Q_1^\core(i)\cap \eta^2(0)\neq \emptyset}$. 
   Our goal is to apply an induction argument to establish the lemma for $k>1$. 
   First note that, since the event that a box of scale $k-1$ is good is measurable with respect to passage times inside a ball of diameter $R_{k-1}^\couter$, 
   we have that 
   condition~\eqref{eq:cond1} is measurable with respect to the passage times inside 
   $$
      \bigcup_{x\in Q_k^\core(i)} \lr{x+\ball{2R_{k-1}^\couter}}.
   $$
   It remains to establish the measurability result for condition~\eqref{eq:cond2}. 
   Note that condition~\eqref{eq:cond2} gives the existence of a point $y$ in 
   $\bigcup_{x\in Q_k^\core(i)} \lr{x+\ball{3R_{k-1}^\couter}}$ such that $y\in C_{k-1}$.
   Let $j$ be the integer such that $y\in Q_{k-1}^\core(j)$. 
   Then the induction hypothesis gives that $\{Q_{k-1}^\core(j) \cap C_{k-1}\neq \emptyset\}$ is measurable with respect to the passage times 
   inside
   $$
      \bigcup_{x\in Q_{k-1}^\core(j)} \lr{x+\ball{3R_{k-2}^\couter+R_{k-2}}}
      \subset y+\ball{R_{k-1}}
      \subset Q_k^\super(i).
   $$
   Therefore, condition~\eqref{eq:cond2} is measurable with respect to the passage times inside $Q_k^\super(i)$.
\end{proof}

\begin{lemma}\label{lem:recursion}
   There exists a constant $c=c(d,\epsilon,\alpha,\Q)>0$ such that, 
   for all $k\in\mathbb{N}$ and all $i\in \mathbb{Z}^d$, 
   we have 
   $$
      \rho_k(i) \leq c k^{2d(d+2)}\sup_{j}\rho_{k-1}^2(j)+c k^{d(d+2)}q_{k-1}.
   $$
\end{lemma}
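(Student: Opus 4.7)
The plan is to exploit the dichotomy already observed in the text preceding the lemma: if $Q_k^\core(i)\cap C_k\neq\emptyset$, then at least one of conditions~\eqref{eq:cond1} or~\eqref{eq:cond2} must hold. A union bound reduces the proof to estimating these two events separately, combining a geometric counting argument with Lemma~\ref{lem:goodbox} for~\eqref{eq:cond1} and with Lemma~\ref{lem:measure} (to extract independence) for~\eqref{eq:cond2}.

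For~\eqref{eq:cond1}, I would first count the $(k-1)$-boxes $Q_{k-1}(j)$ whose outer region $Q_{k-1}^\couter(j)$ can intersect $Q_k^\core(i)$. Using the relations $L_k\leq c k^d R_{k-1}^\couter$ and $R_{k-1}^\couter\leq ck^2 L_{k-1}$ established in Section~\ref{sec:multiscale} via~\eqref{eq:rkrel1}, the number of such $j$ is at most a constant times $(L_k/L_{k-1})^d=O(k^{d(d+2)})$. Since Lemma~\ref{lem:goodbox} gives $\PR(Q_{k-1}(j)\text{ is bad})\leq q_{k-1}$, a union bound yields
\begin{equation*}
\PR(\text{\eqref{eq:cond1} holds})\leq c k^{d(d+2)}q_{k-1},
\end{equation*}
which is the second term in the target bound.

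For~\eqref{eq:cond2}, I would sum over ordered pairs $(j_1,j_2)$ of distinct indices with $x\in Q_{k-1}^\core(j_1)\cap C_{k-1}$ and $y\in Q_{k-1}^\core(j_2)\cap C_{k-1}$. The constraint $x\in Q_k^\core(i)$ leaves $O(k^{d(d+2)})$ choices for $j_1$; the constraint $y\in j_1 L_{k-1}+\ball{3R_{k-1}^\couter}$ (that is, $\iota=j_1$) leaves $O((R_{k-1}^\couter/L_{k-1})^d)=O(k^{2d})$ choices for $j_2$. Hence the number of admissible pairs is at most $ck^{d(d+4)}\leq ck^{2d(d+2)}$.

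The main obstacle, and the one step that requires real care, is to verify that for every admissible pair the super cells $Q_{k-1}^\super(j_1)$ and $Q_{k-1}^\super(j_2)$ are disjoint, so that Lemma~\ref{lem:measure} together with the product structure of the passage times yields the independence
\begin{equation*}
\PR\bigl(Q_{k-1}^\core(j_1)\cap C_{k-1}\neq\emptyset\text{ and }Q_{k-1}^\core(j_2)\cap C_{k-1}\neq\emptyset\bigr)=\rho_{k-1}(j_1)\rho_{k-1}(j_2)\leq\sup_j\rho_{k-1}^2(j).
\end{equation*}
The condition $y\notin Q_{k-1}(\iota)$ in~\eqref{eq:cond2}, combined with $Q_{k-1}(\iota)=\iota L_{k-1}+[-10dL_{k-1},10dL_{k-1}]^d$, forces $|j_1-j_2|_\infty\geq 10d-1/2$, while each super cell extends only $3R_{k-2}^\couter+R_{k-2}$ beyond its core. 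Since~\eqref{eq:rkrel1} gives $R_{k-2}^\couter\ll L_{k-1}$ once $L_1$ is chosen sufficiently large, the separation in $\ell_\infty$ is far more than enough to guarantee disjointness. Multiplying the pair count by $\sup_j\rho_{k-1}^2(j)$ then gives the first term $ck^{2d(d+2)}\sup_j\rho_{k-1}^2(j)$, and combining with the bound for~\eqref{eq:cond1} completes the recursion.
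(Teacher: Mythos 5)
Your proof is correct and follows essentially the same route as the paper: split according to conditions~\eqref{eq:cond1} and~\eqref{eq:cond2}, count the admissible $(k-1)$-boxes, use Lemma~\ref{lem:goodbox} for the first term, and use Lemma~\ref{lem:measure} together with disjointness of the super cells (guaranteed because $y\notin Q_{k-1}(\iota)$ forces $\|x-y\|_\infty \geq (10d-1/2)L_{k-1}$ while the super cells have $\ell_\infty$-radius only $O(L_{k-1})$) to obtain the factorization of probabilities. Your pair count $O(k^{d(d+4)})$ for condition~\eqref{eq:cond2}, obtained by counting $j_2$ given $j_1$ rather than counting them independently as the paper does, is in fact a bit sharper than the paper's $O(k^{2d(d+2)})$, though both are dominated by the claimed bound.
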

\begin{proof}
   From the discussion above, we have that $\rho_k(i)$ is bounded above by the probability that condition~\eqref{eq:cond1} occurs plus the probability that 
   condition~\eqref{eq:cond2} occurs. We start with condition~\eqref{eq:cond1}. Note that $Q_{k-1}^\core(j)$, for $j$ defined as in~\eqref{eq:cond1},
   must be contained inside 
   $$
      \bigcup_{x\in Q_k^\core(i)} \ball{R_{k-1}^\couter+R_{k-1}}.
   $$
   Therefore, there is a constant $c_0$ depending only on $d$ such that 
   the number of options for the value of $j$ is at most 
   $$
      c_0 \lr{\frac{R_k+R_{k-1}^\couter+R_{k-1}}{R_{k-1}}}^d 
      \leq c_0 \lr{\frac{2 R_k}{R_{k-1}}}^d 
      \leq c_1 k^{d(d+2)},
   $$
   for some constant $c_1$, where the first inequality comes from~\eqref{eq:rkrecursion} and the last inequality follows from~\eqref{eq:rkrel1}.
   Then, taking the union bound on the value of $j$, we obtain that the probability that condition~\eqref{eq:cond1} occurs is at most
   $$
      c_1 k^{d(d+2)} q_{k-1}.
   $$
   Now we bound the probability that condition~\eqref{eq:cond2} happens. 
   For any $z\in \mathbb{Z}^d$, let $\phi(z)\in\mathbb{Z}^d$ be such that 
   $z\in Q_{k-1}^\core(\phi(z))$. 
   We will need to estimate the number of 
   different values that $\phi(x)$ and $\phi(y)$ can assume. 
   Since $x\in Q_k^\core(i)$, we have that $\phi(x)$ can assume at most $\lr{\frac{L_k+2L_{k-1}}{L_{k-1}}}^d$ values.
   For $\phi(y)$, 
   first note that any point in $Q_{k-1}^\core(\phi(y))$ 
   must be contained inside $\phi(x)L_{k-1} + [-3C_\FPP' R_{k-1}^\couter - L_{k-1},3C_\FPP' R_{k-1}^\couter + L_{k-1}]^d$.
   Therefore, $Q_{k-1}^\core(\phi(y))$ 
   must be contained inside a cube of side length 
   $$
      L_k + 6C_\FPP' R_{k-1}^\couter + 2L_{k-1},
   $$
   and consequently there are at most 
   $$
      \lr{\frac{L_k + 6C_\FPP' R_{k-1}^\couter + 2L_{k-1}}{L_{k-1}}}^d
   $$
   possible values for $\phi(y)$.
   Letting $A_k$ be the number of ways of choosing the $Q_{k-1}^\core$ boxes containing $x,y$ according to condition~\eqref{eq:cond2}, 
   we obtain
   $$
      A_k \leq \lr{\frac{L_k+2L_{k-1}}{L_{k-1}}}^d\lr{\frac{L_k + 6C_\FPP' R_{k-1}^\couter + 2L_{k-1}}{L_{k-1}}}^d
      \leq c_2 k^{2d(d+2)},
   $$
   for some constant $c_2=c_2(d,\epsilon,\alpha,\Q)>0$, where the inequality follows from~\eqref{eq:rkrel1}.
   Now, given $x,y$, we want to give an upper bound for 
   $$
      \PR\lr{\lrc{Q_{k-1}^\core(\phi(x)) \cap C_{k-1}\neq \emptyset} \cap \lrc{Q_{k-1}^\core(\phi(y)) \cap C_{k-1}\neq \emptyset}}.
   $$
   From Lemma~\ref{lem:measure} we have that the event $\lrc{Q_{k-1}^\core(\phi(x)) \cap C_{k-1}}$ is measurable with respect to the passage times inside 
   $Q_{k-1}^\super(\phi(x))$.
   By the definition of $Q_{k-1}^\super$ in~\eqref{eq:qksuper}, for any $z\in Q_{k-1}^\super(\phi(x))$ we have
   \begin{align*}
      |x-z| 
      &\leq |x-\phi(x)L_{k-1}| + \lr{3R_{k-2}^\couter+R_{k-2}}\\
      &\leq \frac{L_{k-1}}{2C_\FPP} + 4R_{k-2}^\couter\\
      &\leq \frac{R_{k-1}}{20d^2} + \frac{R_{k-1}}{500d^2(k-1)^d},\\
      &\leq \frac{R_{k-1}}{19d^2},
   \end{align*}
   where we related $R_{k-2}^\couter$ and $R_{k-1}$ via~\eqref{eq:rkrecursion}.
   Since by~\eqref{eq:xy} and~\eqref{eq:rl2} we have 
   $$
      |x-y| > \frac{(10d^2-1/2)}{C'_\FPP}L_{k-1}
         \geq \left(\frac{10d^2-1/2}{C'_\FPP}\right)\left(\frac{C^2_\FPP R_{k-1}}{10d^2C'_\FPP}\right)
         \geq \left(1-\frac{1}{20d^2}\right)\left(\frac{R_{k-1}}{d^2}\right),
   $$
   where the last inequality follows from~\eqref{eq:cfpprel},
   we then obtain 
   $Q_{k-1}^\super(\phi(x)) \cap Q_{k-1}^\super(\phi(y))= \emptyset$. 
   This gives that the events $\lrc{Q_{k-1}^\core(\phi(x)) \cap C_{k-1}}$ and $\lrc{Q_{k-1}^\core(\phi(y)) \cap C_{k-1}}$ are independent, yielding
   $$
      \rho_k(i) \leq c_1 k^{d(d+2)} q_{k-1} + A_k \sup_j \rho_{k-1}^2(j).
   $$
\end{proof}

In the lemma below, recall that $\eta^2(0)$ is given by adding each vertex of $\mathbb{Z}^d$ with probability $p$, independently of one another.
Also let $\bar\rho$ be such that $\bar\rho\geq \sup_{j}\rho_{1}(j)$.
\begin{lemma}\label{lem:rhoexpanded}
   Fix any positive constant $a$. We can set $L_1$ large enough and then $p$ small enough, both depending on $a,\alpha,\epsilon, d$ and $\Q$,  such that
   for all $k\in\mathbb{N}$ and all $i\in \mathbb{Z}^d$, 
   we have 
   $$
      \rho_k(i) \leq \exp\lr{- a 2^{k}}.
   $$
\end{lemma}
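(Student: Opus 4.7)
The natural approach is induction on $k$ using the recursion from Lemma~\ref{lem:recursion},
$$
   \rho_k(i) \leq c k^{2d(d+2)}\sup_{j}\rho_{k-1}^2(j) + c k^{d(d+2)} q_{k-1}.
$$
A naive attempt with hypothesis $\rho_k \leq e^{-a 2^k}$ does not close: squaring gives $e^{-a 2^k}$, and the polynomial prefactor $c k^{2d(d+2)}$ exceeds $1$ for large $k$, so there is no slack. The fix is to strengthen the hypothesis to $\rho_k \leq e^{-b_k}$ for a sequence $b_k$ tailored so that doubling absorbs the polynomial factor exactly, and then to verify $b_k \geq a 2^k$ for all $k$.

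Define $b_k$ by the recurrence $b_k = 2 b_{k-1} - 2d(d+2)\log k - \log(2c)$, with initial value $b_1$ to be fixed. Dividing by $2^k$ gives the telescoping identity
$$
   \frac{b_k}{2^k} = \frac{b_1}{2} - \sum_{j=2}^{k} \frac{2d(d+2)\log j + \log(2c)}{2^j}.
$$
The series on the right converges to a finite constant $S = S(d,\epsilon,\alpha)$, so $b_k \geq (b_1/2 - S)\cdot 2^k$ for every $k$. Choosing $b_1 \geq 2(a + S)$ yields $b_k \geq a 2^k$ as required. The base case is handled directly: since $C_1 = \eta^2(0)$, we have $\rho_1(i) \leq p L_1^d$, so after fixing $L_1$ we may take $p$ small enough (depending on $L_1$, $a$, $d$, $\epsilon$, $\alpha$) to guarantee $\rho_1(i) \leq e^{-b_1}$.

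For the inductive step, assume $\rho_{k-1}(j) \leq e^{-b_{k-1}}$ for all $j$. The first term in Lemma~\ref{lem:recursion} is at most $c k^{2d(d+2)} e^{-2b_{k-1}} = \tfrac{1}{2} e^{-b_k}$ by the very definition of $b_k$, so this half of the bound is automatic. For the second term, recall $q_{k-1} = \exp\lr{-c (R_{k-1}^\couter)^{(d+1)/(2d+4)}}$ from Lemma~\ref{lem:goodbox}; by~\eqref{eq:rkrel2}, $R_{k-1}^\couter$ grows at least as fast as $c^{k-2}((k-1)!)^{d+2} L_1$, i.e.\ super-exponentially in $k$. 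Since $b_k$ grows only exponentially (as $b_k \leq b_1 \cdot 2^{k-1}$), taking $L_1$ sufficiently large makes the double-exponential decay of $q_{k-1}$ dominate $e^{-b_k}$ uniformly in $k$, and $c k^{d(d+2)} q_{k-1} \leq \tfrac{1}{2}e^{-b_k}$ for every $k \geq 2$. Adding the two bounds, $\rho_k(i) \leq e^{-b_k} \leq e^{-a 2^k}$, completing the induction.

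The main obstacle is choosing the right shape of the induction hypothesis: the polynomial factor $k^{2d(d+2)}$, which arises from counting the $(k-1)$-boxes whose contagion can propagate into a given $k$-box, cannot be absorbed by pure doubling, so one must build a logarithmic correction into $b_k$. Once this correction is in place, both the parameter choices (first $L_1$ large to swamp the $q_{k-1}$ term, then $p$ small to seed the base case) and the verification that $b_k \geq a 2^k$ are straightforward.
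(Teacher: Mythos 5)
Your proof is correct and is essentially the argument in the paper: both iterate the doubling recursion of Lemma~\ref{lem:recursion}, kill the initial term by taking $p$ small after $L_1$ is fixed, and absorb the $q_{k-1}$ contribution using the super-exponential (factorial) growth of $R_{k-1}^\couter$ in $k$ together with its linear scaling in $L_1$. The packaging differs: the paper unrolls the recursion fully, introduces the explicit products $A_\ell$, and controls them via $(x+y)^m\leq 2^{m-1}(x^m+y^m)$ plus an induction on $A_\ell$, whereas you strengthen the induction hypothesis to $\rho_k \leq e^{-b_k}$ with $b_k$ chosen so that the first term of the recursion becomes exactly $\tfrac12 e^{-b_k}$, and the telescoping identity $b_k/2^k = b_1/2 - \sum_{j\leq k}(\cdot)/2^j$ immediately gives $b_k\geq a2^k$. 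This is a cleaner way to organize the same computation: it makes transparent that the polynomial prefactor $ck^{2d(d+2)}$ only costs a convergent geometric-weighted sum of $\log k$ terms, which is exactly the content of the paper's $A_\ell$ bounds but without tracking the iterated products explicitly. The one place where your write-up is slightly terse is the claim that a single choice of $L_1$ makes $ck^{d(d+2)}q_{k-1}\leq\tfrac12 e^{-b_k}$ for every $k\geq 2$: one should note, as the paper does, that from~\eqref{eq:rkrecursion} the constant $C$ in $R_{k-1}\geq C^{k-2}((k-1)!)^{d+2}L_1$ satisfies $C^{(d+1)/(2d+4)}\geq C^{1/3}\geq 2$, so $(R_{k-1}^\couter)^{(d+1)/(2d+4)}\gtrsim 2^{k-2}L_1^{(d+1)/(2d+4)}$ already beats $b_k\leq b_1 2^{k-1}$ once $L_1^{(d+1)/(2d+4)}\gtrsim b_1$; the super-exponential factorial growth is then pure surplus. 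This is a small elaboration, not a gap.
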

\begin{proof}
   For $k=1$, then $\rho_k(i)$ is bounded above by the probability that 
   $\eta^2(0)$ intersects $Q_k^\core(i)$. Once $L_1$ has been fixed, this probability
   can be made arbitrarily small by setting $p$ small enough.
   
   Now we assume that $k\geq 2$.
   We will expand the recursion in Lemma~\ref{lem:recursion}.
   Using the same constant $c$ as in Lemma~\ref{lem:recursion}, define 
   $$
      \bar q_{k-1} = c k^{d(d+2)}q_{k-1}
      \quad \text{ for all $k\geq 2$}.
   $$
   Now fix $k$, set $A_{-1}=1$, and define for $\ell=0,1,\ldots,k-1$ 
   \begin{equation}
      A_\ell 
         = A_{\ell-1} 2^{2^\ell-1} \lr{c(k-\ell)^{2d(d+2)}}^{2^{\ell}}
         = \prod_{m=0}^\ell 2^{2^m-1} \lr{c(k-m)^{2d(d+2)}}^{2^{m}}.
      \label{eq:al}
   \end{equation}
   With this, the recursion in Lemma~\ref{lem:recursion} can be written as 
   \begin{align*}
      \rho_k(i) 
      & \leq A_0 \sup_{j}\rho_{k-1}^2(j)+\bar q_{k-1}\\
      & \leq A_0\,2\lr{\lr{c (k-1)^{2d(d+2)}}^2 \sup_{j}\rho_{k-2}^4(j)+\bar q_{k-2}^2}+\bar q_{k-1}\\
      & = A_1\sup_{j}\rho_{k-2}^4(j)+2A_0 q_{k-2}^2+\bar q_{k-1},
   \end{align*}
   where in the second inequality we used that $(x+y)^m \leq 2^{m-1}\lr{x^m+y^m}$ for all $x,y\in\mathbb{R}$ and $m\in\mathbb{N}$.
   Iterating the above inequality, we obtain
   \begin{align}
      \rho_k(i) 
      \leq A_{k-2} \sup_{j}\rho_{1}^{2^{k-1}}(j)+\sum_{\ell=1}^{k-1}2^{2^{\ell-1}-1}A_{\ell-2}\bar q_{k-\ell}^{2^{\ell-1}}
      = A_{k-2} \bar \rho^{2^{k-1}}+\sum_{\ell=1}^{k-1}2^{2^{\ell-1}-1}A_{\ell-2}\bar q_{k-\ell}^{2^{\ell-1}}.
      \label{eq:it}
   \end{align}
   We now claim that 
   \begin{equation}
      A_\ell \leq \lr{4 c^2\, \lr{3(k-\ell)}^{5d(d+2)}}^{2^\ell} \quad \text{for all $\ell=0,1,\ldots,k-1$}.
      \label{eq:recclaim}
   \end{equation}
   We can prove~\eqref{eq:recclaim} by induction on $\ell$. Note that $A_0$ does satisfy the above inequality.
   Then, using the induction hypothesis and the recursive definition of $A_\ell$ in~\eqref{eq:al}, we have 
   \begin{align*}
      A_\ell 
      &= A_{\ell-1} 2^{2^\ell-1} \lr{c(k-\ell)^{2d(d+2)}}^{2^{\ell}}\\
      &\leq \lr{4c^2 \lr{3(k-\ell+1)}^{5d(d+2)}}^{2^{\ell-1}}2^{2^\ell-1} \lr{c(k-\ell)^{2d(d+2)}}^{2^{\ell}}\\
      &= \frac{1}{2}\lr{2c \lr{3(k-\ell+1)}^{5d(d+2)/2}\, 2\,c(k-\ell)^{2d(d+2)}}^{2^{\ell}}.
   \end{align*}
   Now we use that $(x+1)^{5/2}\leq 6x^{3}$ for all $x\geq 1$, which yields
   \begin{align*}
      A_\ell 
      &\leq \frac{1}{2}\lr{2c \lr{3^{5/2}6(k-\ell)^3}^{d(d+2)}\, 2\,c(k-\ell)^{2d(d+2)}}^{2^{\ell}}\\
      &= \frac{1}{2}\lr{4c^2 \lr{3^{1/2}6^{1/5}(k-\ell)}^{5d(d+2)}}^{2^{\ell}}
      \leq  \frac{1}{2}\lr{4c^2 \lr{3(k-\ell)}^{5d(d+2)}}^{2^{\ell}},
   \end{align*}
   establishing~\eqref{eq:recclaim}. Plugging~\eqref{eq:recclaim} into~\eqref{eq:it}, we obtain
   \begin{align}
      \rho_k(i) 
      &\leq \lr{4c^2\,6^{5d(d+2)}}^{2^{k-2}}\bar \rho^{2^{k-1}}
         +\sum_{\ell=1}^{k-1}2^{2^{\ell-1}-1}\lr{4c^2\lr{3(k-\ell+2)}^{5d(d+2)}}^{2^{\ell-2}}\bar q_{k-\ell}^{2^{\ell-1}}\nonumber\\
      &\leq \lr{2c\,6^{5d(d+2)/2}\bar \rho}^{2^{k-1}}
         +\frac{1}{2}\sum_{\ell=1}^{k-1}\lr{4c\lr{3(k-\ell+2)}^{5d(d+2)/2}\bar q_{k-\ell}}^{2^{\ell-1}}.
         \label{eq:rhokfinal}
   \end{align}
   Given a value of $L_1$, for all small enough $p$ we obtain that $\bar\rho$ is sufficiently small to yield 
   $$
      \lr{2c\,6^{5d(d+2)/2}\bar \rho}^{2^{k-1}} \leq \frac{1}{2}\exp\lr{-a 2^{k}}.
   $$
   Now we turn to the second term in~\eqref{eq:rhokfinal}.
   Note that for small enough $\epsilon$, we have $\epsilon \lambda R_k^\enc \geq \epsilon \lambda \exp\left(\frac{1+c_1}{2\epsilon}\right)R_k> R_k$. Thus, 
   from Lemma~\ref{lem:goodbox}, we have that $q_{k-\ell}\leq \exp\lr{-c R_{k-\ell}^\frac{d+1}{2d+4}}$, for some constant $c=c(\alpha,\epsilon,d,\Q)>0$.
   We have from the relations~\eqref{eq:rkrel1} and~\eqref{eq:rkrel2} that $R_j\leq c_1 c_2^j (j!)^{d}L_1$ for positive constants 
   $c_1,c_2$. Therefore, for any $k\geq \ell$, we have that 
   \begin{align*}
      \lr{4c\lr{3(k-\ell+2)}^{5d(d+2)/2}\bar q_{k-\ell}}^{2^{\ell-1}}
      &\leq \exp\lr{-c_3 \lr{(k-\ell)!}^\frac{d(d+1)}{2d+4}2^{\ell-1}c_2^\frac{(k-\ell)(d+1)}{2d+4}L_1^\frac{d+1}{2d+4}}\\
      &\leq \exp\lr{-c_3 2^{k}L_1^\frac{d+1}{2d+4}},
   \end{align*}
   where in the last step we use that $c_2^\frac{(d+1)}{2d+4}\geq c_2^{1/3}\geq 2$.
   Hence, for sufficiently large $L_1$ we obtain
   $$
      \rho_k(i) 
      \leq \frac{1}{2}\exp\lr{-a 2^{k}}
         +\frac{k}{2}\exp\lr{-c_3 2^{k}L_1^\frac{d+1}{2d+4}} 
      \leq \exp\lr{-a 2^{k}}.
   $$
\end{proof}

\subsection{Multiscale paths of infected sets}\label{sec:multiscalepaths}

Let $x\in\mathbb{Z}^d$ be a fixed vertex. We say that $\Gamma=(k_1,i_1),(k_2,i_2),\ldots,(k_\ell,i_\ell)$ 
is a \emph{multi-scale path} from $x$ if 
$x\in Q_{k_1}^\enc(i_1)$, and for each $j\in\{2,3,\ldots,\ell\}$ we have $Q_{k_j}^\enc(i_j)\cap Q_{k_{j-1}}^\enc(i_{j-1})\neq\emptyset$. 
Hence, 
$$
   \bigcup\nolimits_{(k,i)\in\Gamma} Q_{k}^\enc(i) \text{ is a connected subset of $\mathbb{Z}^d$ and contains $x$}.
$$
Given such a path, we say that 
the \emph{reach} of $\Gamma$ is given by $\sup\lrc{|z-x| \colon z \in \bigcup\nolimits_{(k,i)\in\Gamma} Q_{k}^\enc(i)}$, 
that is, the distance between $x$ and the furthest away point of $\Gamma$.
We will only consider paths such that $Q_{k_j}^\enc(i_j)\subset I_{k_j}$. 
Recall 
the way the sets $I_\kappa$ are constructed from 
$C_\kappa\setminus C_\kappa^\bad$, which is defined in~\eqref{eq:cbad}.
Then for any two $(k,i),(k',i')\in\Gamma$ with
$k=k'$ we have $Q_{k}^\enc(i)\cap Q_{k'}^\enc(i')=\emptyset$.
Therefore, we impose the additional restriction that on any multi-scale path $\Gamma=(k_1,i_1),(k_2,i_2),\ldots,(k_\ell,i_\ell)$ we have 
$k_j\neq k_{j-1}$ for all $j\in\{2,3,\ldots,\ell\}$. 

Now we introduce a subset $\tilde \Gamma$ of $\Gamma$ as follows. 
For each $k\in\mathbb{N}$ and $i\in\mathbb{Z}^d$, define
$$
   Q_k^\neigh(i) = i L_k + \ball{\tfrac{11}{10}R_k^\couter}
   \quad\text{and}\quad
   Q_k^\neighs(i) = i L_k + \ball{\tfrac{6}{5}R_k^\couter}.
$$
Note that $Q_k^\couter(i)\subset Q_k^\neigh(i)\subset Q_k^\neighs(i)$.
Let $\kappa_1 > \kappa_2>\cdots$ be an ordered list of the scales that appear in cells of $\Gamma$. 
The set $\tilde\Gamma$ will be constructed in steps, one step for each scale. First, add to $\tilde\Gamma$ all cells of $\Gamma$ of scale $\kappa_1$. 
Then, for each $j\geq2$, after having decided which cells of $\Gamma$ of scale at least $\kappa_{j-1}$ we add to $\tilde\Gamma$, 
we add to $\tilde \Gamma$ all cells $(k,i)\in \Gamma$ of 
scale $k=\kappa_j$ such that $Q_k^\neigh(i)$ does not intersect $Q_{k'}^\neigh(i')$ for each $(k',i')$ already added to $\tilde\Gamma$. 
Recall that, from the definition of $C_k^\bad$ in~\eqref{eq:cbad}, two cells 
$(k,j),(k,j')$ of the same scale that are part of $I_k$ must be such that 
$$
   Q_k(j') \not\subset jL_k + \ball{3R_k^\couter}.
$$
This gives that $|jL_k-j'L_k| \geq 3 R_k^\couter-R_k$, which implies that 
$Q_k^\neighs(j)$ and $Q_k^\neighs(j')$ do not intersect.

The idea behind the definitions above is that we will look at ``paths'' of multi-scale cells such that two neighboring cells in the path 
are such that their $Q^\neighs$ regions intersect, and any two cells in the path have disjoint $Q^\neigh$ regions. The first property limits the number of 
cells that can be a neighbor of a given cell, allowing us 
to control the number of such paths, while the second property allows us to argue that the encapsulation procedure 
behaves more or less independently for different cells of the path.
\begin{lemma}\label{lem:paths}
   Let $\Gamma=(k_1,i_1),(k_2,i_2),\ldots,(k_\ell,i_\ell)$ be a multi-scale path starting from $x$. 
   Then, the subset $\tilde \Gamma$ defined above is such that 
   $$
      \bigcup\nolimits_{(k,i)\in\tilde\Gamma}Q_{k}^\neighs(i) \text{ is a connected subset of $\mathbb{Z}^d$}.
   $$
   Furthermore, any point $y\in \bigcup_{(k,i)\in\Gamma}Q_k^\enc(i)$ must belong to $\bigcup_{(k,i)\in\tilde\Gamma}Q_k^\neighs(i)$.
\end{lemma}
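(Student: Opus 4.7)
The plan is to reduce both conclusions of the lemma to a single absorption property: for every $(k,i)\in\Gamma\setminus\tilde\Gamma$ there exists a cell $(k',i')\in\tilde\Gamma$ with $k'>k$ such that $Q_k^{\neigh}(i)\subset Q_{k'}^{\neighs}(i')$ (and hence also $Q_k^{\enc}(i)\subset Q_{k'}^{\neighs}(i')$, since $R_k^{\enc}=\frac{\epsilon}{72k^2}R_k^{\couter}<\frac{11}{10}R_k^{\couter}$). Once this is available, the second conclusion is immediate, because cells of $\tilde\Gamma$ trivially satisfy $Q_k^{\enc}(i)\subset Q_k^{\neighs}(i)$, and the first conclusion will follow by tracing the chain of intersecting $Q^{\enc}$ regions provided by $\Gamma$ and passing to the absorbing cells.

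To establish the absorption property, recall that $\tilde\Gamma$ is constructed by processing the scales $\kappa_1>\kappa_2>\cdots$ in decreasing order: $(k,i)\in\Gamma$ with $k=\kappa_j$ is placed in $\tilde\Gamma$ unless it is ruled out by a previously accepted $(k',i')\in\tilde\Gamma$ with $k'=\kappa_{j'}$, $j'<j$ (hence $k'>k$) for which $Q_k^{\neigh}(i)\cap Q_{k'}^{\neigh}(i')\neq\emptyset$. Pick any $z$ in this intersection and any $y\in Q_k^{\neigh}(i)$. Using the triangle inequality for the norm $|\cdot|$,
\begin{equation*}
   |y-i'L_{k'}|\;\leq\;|y-iL_k|+|iL_k-z|+|z-i'L_{k'}|\;\leq\;\tfrac{22}{10}R_k^{\couter}+\tfrac{11}{10}R_{k'}^{\couter}.
\end{equation*}
Since $k'\geq k+1$, the scale recursion~\eqref{eq:rkrecursion} yields $R_{k'}^{\couter}\geq R_{k'}\geq R_{k+1}\geq 2000\,d\,(k+1)^d\,R_k^{\couter}$, which overwhelms the required threshold $22\,R_k^{\couter}$. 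Consequently the right-hand side is at most $\tfrac{12}{10}R_{k'}^{\couter}$, so $y\in Q_{k'}^{\neighs}(i')$, proving the absorption.

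It remains to deduce the connectedness. Enumerate $\Gamma=(k_1,i_1),\ldots,(k_\ell,i_\ell)$, and for each $j$ let $\Phi(j)\in\tilde\Gamma$ be either $(k_j,i_j)$ itself, when that cell lies in $\tilde\Gamma$, or an absorbing cell provided by the previous step. By definition of a multi-scale path, $Q_{k_j}^{\enc}(i_j)\cap Q_{k_{j+1}}^{\enc}(i_{j+1})\neq\emptyset$ for each $j<\ell$; any point in this intersection simultaneously lies in $Q_{\Phi(j)}^{\neighs}$ and $Q_{\Phi(j+1)}^{\neighs}$, so these two $Q^{\neighs}$ regions intersect. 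Following $j=1,2,\ldots,\ell$ this exhibits $\bigcup_{(k,i)\in\tilde\Gamma}Q_k^{\neighs}(i)$ as a union of pairwise-intersecting connected sets along a connected chain, and since $\tilde\Gamma\subset\Gamma$ every cell of $\tilde\Gamma$ appears as some $\Phi(j)$, so no cell is missed. The coverage statement is then just the observation that $Q_{k_j}^{\enc}(i_j)\subset Q_{\Phi(j)}^{\neighs}$ for every $j$. The only potential obstacle is ensuring sufficient scale separation for the absorption bound, but since~\eqref{eq:rkrecursion} makes $R_{k+1}^{\couter}/R_k^{\couter}$ super-polynomial in $k$, the slack is huge and the argument is comfortable.
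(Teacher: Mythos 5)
Your proof is correct. Both conclusions are reduced to a single \emph{absorption property}: every excluded cell $(k,i)\in\Gamma\setminus\tilde\Gamma$ satisfies $Q_k^{\mathrm{neigh}}(i)\subset Q_{k'}^{\mathrm{neigh2}}(i')$ for some higher-scale $(k',i')\in\tilde\Gamma$, and the numerical verification via the triangle inequality and the scale recursion~\eqref{eq:rkrecursion} (yielding $R_{k'}^{\mathrm{outer}}\geq 22\,R_k^{\mathrm{outer}}$ once $k'>k$) is sound. The coverage part of the lemma then becomes immediate, and connectedness follows from the explicit chain $\Phi(1),\ldots,\Phi(\ell)$ whose consecutive $Q^{\mathrm{neigh2}}$ regions intersect.

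This is a genuinely different, and cleaner, route than the one taken in the paper for the connectedness claim. The paper argues indirectly: it fixes an arbitrary proper subset $\Upsilon\subsetneq\tilde\Gamma$, introduces the auxiliary family $\Upsilon^{\mathrm{neigh}}$, exploits the connectedness of $\bigcup_{(k,i)\in\Gamma}Q_k^{\mathrm{enc}}(i)$ to locate a cell leaving $\Upsilon^{\mathrm{neigh}}$, and then runs a several-step case analysis involving four cells $(k,i),(k',i'),(k'',i''),(k''',i''')$ to produce the required member of $\tilde\Gamma\setminus\Upsilon$ whose $Q^{\mathrm{neigh2}}$ region touches $\bigcup_{\Upsilon}Q^{\mathrm{neigh2}}$. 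Your approach instead constructs the connecting chain directly by ``lifting'' each cell of the original path $\Gamma$ to its absorbing cell in $\tilde\Gamma$, which sidesteps the indirect argument entirely and makes the second conclusion a one-line corollary of the same absorption bound (it is essentially the paper's coverage argument, now applied uniformly). The trade-off is negligible: both proofs need only the coarse inequality that consecutive scales of $R^{\mathrm{outer}}$ grow by a large factor, which the multi-scale setup provides with enormous slack. Your write-up could be tightened by stating clearly that ``pairwise-intersecting'' should read ``consecutively-intersecting'' in the chain, but the intended argument is unambiguous and correct.
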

\begin{proof}
   Let $\Upsilon$ be an arbitrary subset of $\tilde\Gamma$ with $\Upsilon \neq \tilde\Gamma$. The first part of the lemma follows by showing that 
   \begin{equation}
      \text{there exists $(\kappa,\iota)\in \tilde \Gamma\setminus \Upsilon$ such that 
         $Q_\kappa^\neighs(\iota)$ intersects $\bigcup\nolimits_{(k,i)\in \Upsilon}Q_k^\neighs(i)$}.
      \label{eq:claiming}
   \end{equation}
   Define
   $$
      \Upsilon^\neigh = \lrc{(k,i)\in\Gamma \colon 
      \text{$Q_{k}^\neigh(i)$ intersects $Q_{k'}^\neigh(i')$ for some $(k',i')\in\Upsilon$}}.
   $$
   Clearly, 
   $\Upsilon^\neigh \supset\Upsilon$, 
   and since $\{Q_k^\neigh(i) \colon (k,i)\in\tilde\Gamma\}$ is by definition a collection of 
   disjoint sets, we have that 
   $$
      \text{all elements of $\Upsilon^\neigh\setminus \Upsilon$ do not belong to $\tilde\Gamma$.}
   $$
   Recall that $\tilde\Gamma\neq\Upsilon$, and since no element of $\tilde\Gamma\setminus\Upsilon$ was added to $\Upsilon^\neigh$, 
   we have that $\Upsilon^\neigh\neq \Gamma$.
   Using that $\bigcup_{(k,i)\in\Gamma} Q_{k}^\enc(i)$ is a connected set, we obtain a value 
   $$
      (k,i)\in\Gamma\setminus \Upsilon^\neigh \text{ for which $Q_{k}^\enc(i)$ intersects 
      $Q_{k'}^\enc(i')\subset Q_{k'}^\neigh(i')$ for some $(k',i')\in \Upsilon^\neigh$.}
   $$
   Refer to Figure~\ref{fig:upsilon} for a schematic view of the definitions in this proof.
   \begin{figure}[htbp]
      \begin{center}
         \includegraphics[scale=.6]{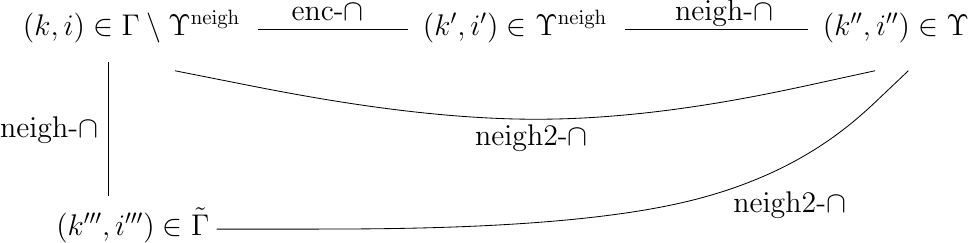}
      \end{center}\vspace{-.5cm}
      \caption{Illustration of the relations between the variables in the proof of Lemma~\ref{lem:paths}. 
         A line from $(\kappa,\iota)$ to $(\kappa',\iota')$ labeled *-$\cap$ indicates that 
         $Q_\kappa^*(\iota)\cap Q_{\kappa'}^*(\iota')\neq\emptyset$.}
      \label{fig:upsilon}
   \end{figure}
   Let 
   $(k',i')$ be the cell of $\Upsilon^\neigh$ for which $Q_k^\enc(i)$ intersects $Q_{k'}^\enc(i')$.
   Since $(k',i')\in \Upsilon^\neigh$, let $(k'',i'')$ be the element of $\Upsilon$ of largest scale for which $Q_{k'}^\neigh(i')$ intersects $Q_{k''}^\neigh(i'')$;
   if $(k',i')\in \Upsilon$, then  $(k'',i'')=(k',i')$.
   We obtain that  
   \begin{equation}
      \text{the distance according to $|\cdot|$ between $Q_{k''}^\neigh(i'')$ and $Q_k^\enc(i)$ }
      \leq R_{k'}^\enc+\tfrac{11}{10}R_{k'}^\couter. 
      \label{eq:dist}
   \end{equation}
   By the construction of 
   $\tilde\Gamma$, and the fact that $(k'',i'')$ was set as the element of largest scale satisfying $Q_{k''}^\neigh(i'')\cap Q_{k'}^\neigh(i')\neq\emptyset$, 
   we must have that 
   $$
      k''> k' \quad\text{or}\quad (k'',i'')=(k',i').
   $$
   In the former case, the distance in~\eqref{eq:dist} is bounded above by $2R_{k''-1}^\couter$, 
   while in the latter case the distance is zero. So we assume that the distance 
   between $Q_{k''}^\neigh(i'')$ and $Q_k^\enc(i)$ is at most $2 R_{k''-1}^\couter$, which yields that 
   $Q_k^\enc(i)$ intersects $Q_{k''}^\neighs(i'')$. Therefore, if $(k,i)\in\tilde\Gamma$, we have~\eqref{eq:claiming} and we are done.
   When $(k,i)\not\in\tilde\Gamma$, 
   take the cell $(k''',i''')\in\tilde\Gamma$ of largest scale such that $Q_k^\neigh(i)$ intersects $Q_{k'''}^\neigh(i''')$ and, by 
   the construction of $\tilde\Gamma$, we have
   $$
      k'''>k.
   $$
   We obtain that $(k''',i''')\not\in\Upsilon$, otherwise it would imply that $(k,i)\in\Upsilon^\neigh$ violating the definition of $(k,i)$.
   The distance between $Q_{k'''}^\neigh(i''')$ and $Q_{k''}^\neigh(i'')$ is at most 
   $$
      \tfrac{6}{5}R_{k}^\couter+R_{k}^\enc+2R_{k''-1}^\couter
      \leq 2 \lr{R_{k'''-1}^\couter+R_{k''-1}^\couter}
      \leq \frac{1}{20}\lr{R_{k''}^\couter+R_{k'''}^\couter}.
   $$
   Therefore, we have that $Q_{k''}^\neighs(i'')$ intersects $Q_{k'''}^\neighs(i''')$, establishing~\eqref{eq:claiming} and concluding the first part of the proof.
   
   For the second part, take $y$ to be a point of $Q_{k}^\enc(i)$ with $(k,i)\in\Gamma$. 
   If $(k,i)\in\tilde\Gamma$, then the lemma follows. 
   Otherwise, let $(\kappa,\iota)$ be the cell of largest scale in $\tilde\Gamma$ such that $Q_{\kappa}^\neigh(\iota)\cap Q_{k}^\neigh(i)\neq\emptyset$. By the 
   construction of $\tilde\Gamma$, we have that $\kappa>k$. The distance between $y$ and $Q_{\kappa}^\neigh(\iota)$ is at most 
   $$
      R_{k}^\enc + R_{k}^\neigh \leq 2R_{\kappa-1}^\neigh \leq \frac{1}{10} R_\kappa,
   $$
   which gives that $y\in Q_\kappa^\neighs(\iota)$.
\end{proof}

Now we define the type of multi-scale paths we will consider.
\begin{definition}
   Given $x\in\mathbb{Z}^d$ and $m>0$, 
   we say that $\Gamma=(k_1,i_1),(k_2,i_2),\ldots,(k_\ell,i_\ell)$ is a \emph{well separated path} of reach $m$ starting from $x$ if 
   all the following hold:
   \begin{align*}
      (i)\;&\text{$x \in Q_{k_1}^\neighs(i_1)$,} \\
      (ii)\;&\text{for any $j\in\{2,3,\ldots,\ell\}$ we have that $Q_{k_j}^\neighs(i_j)$ intersects 
         $Q_{k_{j-1}}^\neighs(i_{j-1})$,} \\
      (iii)\;&\text{for any $j,\iota\in\{1,2,\ldots,\ell\}$ with $|j-\iota|\geq 2$ we have $Q_{k_j}^\neighs(i_j)$ does not intersect 
         $Q_{k_\iota}^\neighs(i_\iota)$,} \\
      (iv)\;&\text{for any distinct $j,\iota\in\{1,2,\ldots,\ell\}$ we have that $Q_{k_j}^\neigh(i_j)$ does not intersect 
         $Q_{k_\iota}^\neigh(i_\iota)$,} \\
      (v)\;&\text{for any $j\in\{2,3,\ldots,\ell\}$ , we have $k_j\neq k_{j-1}$,}\\
      (vi)\;&\text{and the point of $Q_{k_\ell}^\neighs(i_\ell)$ that is furthest away from $x$ is of distance $m$ from $x$.}
   \end{align*}
\end{definition}

We say that a well separated path $\Gamma$ is \emph{infected} if for all $(k,i)\in\Gamma$ we have $Q_{k}^\enc(i)\subset I_k$.
If the origin is separated from infinity by $\eta^2$, 
then there must exist a multi-scale path for which the union of the $Q_k^\enc(i)$ over the cells $(k,i)$ in the path contains the set occupied by $\eta^2$ that separates the 
origin from infinity. Then Lemma~\ref{lem:paths} gives the existence of a well separated path for which the union of the $Q_k^\neighs(i)$ over $(k,i)$ in the path separates the origin from infinity. 
\begin{lemma}\label{lem:givenscales}
   Fix any positive constant $c$. We can set $L_1$ large enough and then $p$ small enough, 
   both depending only on $c,\alpha,d, \epsilon$ and $\Q$, so that the following holds. 
   For any integer $\ell\geq1$, any given collection of (not necessarily distinct) integer numbers $k_1,k_2,\ldots,k_\ell$,
   and any vertex $x\in\mathbb{Z}^d$, we have
   \begin{align*}
      &\PR\lr{\text{$\exists$ a well separated path $\Gamma=(k_1,i_1),(k_2,i_2),\ldots,(k_\ell,i_\ell)$ from $x$ that is infected}}\\
         &\leq \exp\lr{-c \sum\nolimits_{j=1}^\ell 2^{k_j}}.
   \end{align*}
\end{lemma}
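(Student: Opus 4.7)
The strategy is to fix the tuple of scales $(k_1,\ldots,k_\ell)$, union bound over choices of $(i_1,\ldots,i_\ell)$ satisfying conditions (i)--(vi), and use Lemma~\ref{lem:measure} together with condition (iv) to obtain independence, at which point Lemma~\ref{lem:rhoexpanded} will provide the per-scale decay.

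First, I would observe that $\{Q_{k_j}^\enc(i_j)\subset I_{k_j}\}$ forces $i_j\in\mathcal{I}_{k_j}$: since $Q_{k_j}^\enc\subset Q_{k_j}^\couter$ and the balls $\{Q_{k_j}^\couter(i)\}_{i\in\mathcal{I}_{k_j}}$ are disjoint by construction, the balls $\{Q_{k_j}^\enc(i)\}_{i\in\mathcal{I}_{k_j}}$ form a disjoint family of equally sized balls, so one of them can contain $Q_{k_j}^\enc(i_j)$ only if $i_j$ itself belongs to the family. This implies $Q_{k_j}(i_j)\cap C_{k_j}\neq\emptyset$, and decomposing $Q_{k_j}(i_j)$ into its $O(d^d)$ core sub-boxes, the infected event for the path implies that some tuple $(i'_1,\ldots,i'_\ell)$ with $Q_{k_j}^\core(i'_j)\subset Q_{k_j}(i_j)$ satisfies $\{Q_{k_j}^\core(i'_j)\cap C_{k_j}\neq\emptyset\}$ for every $j$. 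A further union bound over these at most $(20d+1)^{d\ell}$ choices only multiplies the final estimate by that factor.

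The key step is then to establish independence of the $\ell$ core-box events. Using~\eqref{eq:cfpp} to convert sup-norm distances on the lattice into the norm $|\cdot|$, any point of $Q_{k_j}^\super(i'_j)$ lies within distance at most $(10d+1)L_{k_j}/C_\FPP + 3R_{k_j-1}^\couter + R_{k_j-1}$ of $L_{k_j}i_j$; since~\eqref{eq:rkrecursion} gives $R_{k_j-1}^\couter\ll R_{k_j}$ and $R_{k_j}^\couter \geq \tfrac{72k_j^2}{\epsilon}R_{k_j} \gg R_{k_j}\geq 10dL_{k_j}/C_\FPP$, this is far below $\tfrac{11}{10}R_{k_j}^\couter$, so $Q_{k_j}^\super(i'_j)\subset Q_{k_j}^\neigh(i_j)$ (the case $k_j=1$ being trivial as $Q_1^\super=Q_1^\core$). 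By Lemma~\ref{lem:measure} each core-box event is measurable with respect to passage times inside $Q_{k_j}^\neigh(i_j)$, and because condition (iv) makes these $Q_{k_j}^\neigh(i_j)$ pairwise disjoint, the $\ell$ events depend on disjoint sets of edges and are jointly independent. Lemma~\ref{lem:rhoexpanded} then bounds their joint probability by $\prod_{j=1}^\ell \exp(-a\,2^{k_j})$ for any constant $a$ chosen in advance.

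Finally, I would count the admissible index tuples: condition (i) gives $O((R_{k_1}^\couter/L_{k_1})^d)$ choices for $i_1$, and condition (ii) gives $O(((R_{k_j}^\couter+R_{k_{j-1}}^\couter)/L_{k_j})^d)$ choices for each subsequent $i_j$. Using the scale relation~\eqref{eq:rkrel2} each such factor is bounded by a quantity of the form $\exp(Ck_j^{d+3})$ for an absolute constant $C$, which is dominated by $\exp(\tfrac{a}{2}\,2^{k_j})$ once $a$ is taken sufficiently large (equivalently, $p$ small and $L_1$ large in Lemma~\ref{lem:rhoexpanded}). Combining the union bound over $(i'_j)$, the independent per-scale estimate, and the path count then yields $\exp(-c\sum_{j=1}^\ell 2^{k_j})$ for any prescribed $c$. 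I expect the main technical obstacle to be the geometric bookkeeping in the independence step: verifying that the super-neighborhoods furnished by Lemma~\ref{lem:measure} sit inside the $\neigh$-balls that condition (iv) declares disjoint, since this is what converts geometric separation into probabilistic independence; once this nesting is confirmed, the remainder amounts to routine union bounds against the super-exponentially small $\rho_k$.
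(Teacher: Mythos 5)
Your proposal is correct and follows the same route as the paper: it reduces the infected-path event to core-box contagious events via the definitions of $I_k$ and $\mathcal{I}_k$, obtains independence by nesting the super-boxes furnished by Lemma~\ref{lem:measure} inside the disjoint $Q^\neigh$-balls guaranteed by condition (iv), applies Lemma~\ref{lem:rhoexpanded} for the per-scale decay, and then union-bounds over multi-scale paths. The one small slip is that when $k_{j-1}>k_j$ the per-step entropy $\bigl((R_{k_j}^\couter+R_{k_{j-1}}^\couter)/L_{k_j}\bigr)^d$ should be charged to $\max(k_j,k_{j-1})$ rather than to $k_j$ (as the paper implicitly does by considering the transition where the scale decreases), but since each scale in the list is the larger of at most two consecutive transitions this does not affect the conclusion.
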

\begin{proof}
   For any $j$, since the path is infected we have $Q_{k_j}^\enc(i_j)\subset I_{k_j}$. This gives that there exists 
   $\tilde i_j$ such that $Q_{k_j}^\core(\tilde i_j)\cap Q_{k_j}(i_j)\cap C_{k_j}\neq \emptyset$. 
   From Lemma~\ref{lem:measure}, we have that the event $\lrc{Q_{k_j}^\core(\tilde i_j)\cap C_{k_j}\neq \emptyset}$
   is measurable with respect to the passage times inside $Q_{k_j}^\super(\tilde i_j)\subset Q_{k_j}^\neigh(i_j)$. 
   Also, the number of choices for $\tilde i_j$ is at most some constant $c_1$, depending only on $d$.
   Since $\{Q_{k_j}^\neigh(i_j)\}_{j=1,\ldots,\ell}$ is a collection of disjoint sets, if we fix 
   the path $\Gamma=(k_1,i_1),(k_2,i_2),\ldots,(k_\ell,i_\ell)$, and take the union bound over the choices of 
   $\tilde i_1, \tilde i_2,\ldots,\tilde i_\ell$, we have from Lemma~\ref{lem:rhoexpanded} that 
   $$
      \PR\lr{\text{$\Gamma$ is an infected path}} \leq c_1^\ell\exp\lr{-a \sum\nolimits_{j=1}^\ell 2^{k_j}},
   $$
   where $a$ can be made as large as we want by properly setting $L_1$ and $p$.
   It remains to bound the number of well separated paths that exist starting from $x$. 
   Since $x \in Q_{k_1}^\neighs(i_1)$, the number of ways to choose the first cell 
   is at most $\lr{\frac{12}{5}C_\FPP' R_{k_1}^\couter}^d$.
   Consider a $j$ such that $k_j>k_{j+1}$. We have that $Q_{k_{j}}^\neighs(i_{j})$ must intersect $Q_{k_{j+1}}^\neighs(i_{j+1})$, 
   which gives that 
   \begin{align*}
      Q_{k_{j+1}}^\core(i_{j+1})
      &\subset i_j L_{k_j}+\ball{\tfrac{6}{5}R_{k_j}^\couter +\tfrac{6}{5}R_{k_{j+1}}^\couter+R_{k_{j+1}}}\\
      &\subset i_j L_{k_j}+\ball{\tfrac{7}{5}R_{k_j}^\couter}\\
      &\subset i_j L_{k_j}+\lrb{-\tfrac{7C_\FPP'}{5}R_{k_j}^\couter,\tfrac{7C_\FPP'}{5}R_{k_j}^\couter}^d.
   \end{align*}
   Hence, the number of ways to choose $i_{j+1}$ given $(k_j,i_j)$ and $k_{j+1}$ is at most 
   $$
      \lr{\frac{14C_\FPP'}{5}\frac{R_{k_j}^\couter}{L_{k_{j+1}}}}^d.
   $$
   Therefore, we have that 
   \begin{align*}
      &\PR\lr{\text{$\exists$ a well separated path $\Gamma=(k_1,i_1),(k_2,i_2),\ldots,(k_\ell,i_\ell)$ from $x$ that is infected}}\\
         &\leq c_1^\ell\lr{\frac{12}{5}C_\FPP' R_{k_1}^\couter}^d \prod_{j=1}^\ell \lr{\frac{14C_\FPP'}{5}\frac{R_{k_j}^\couter}{L_{k_{j+1}}}}^{2d}\exp\lr{-a2^{k_j}}\\
         &\leq c_1^\ell\prod_{j=1}^\ell \exp\lr{c_2 k_j\log(k_j) -a2^{k_j}}\\
         &\leq \prod_{j=1}^\ell \exp\lr{-a2^{k_j-1}},
   \end{align*}
   where the second inequality follows for some $c_2=c_2(d,\alpha,\epsilon,\Q)$ by the value of $R_k^\couter$ from~\eqref{eq:rkrel2},
   and the last inequality follows by setting $a$ large enough and such that $a\geq 2c$.
\end{proof}

For the lemma below, define the event 
\begin{align}
   E_{\kappa,r} = \{&\text{there exists a well separated path from the origin that is infected,}\nonumber\\
   &\text{has only cells of scale smaller than $\kappa$, and has reach at least $C_\FPP r$}\}.
   \label{eq:defe}
\end{align}
Let $E_{\infty,r}$ be the above event without the restriction that all scales must be smaller than $\kappa$.
Below we restrict to $r>3$ just to ensure that $\log\log r >0$.
\begin{proposition}\label{pro:path}
   Fix any positive constant $c$, any $r>3$ and any time $t\geq0$. We can set $L_1$ large enough and then $p$ small enough, 
   both depending only on $c,\alpha,d,\epsilon$ and $\Q$, so that there exists a positive constant $c'$ depending only on $d$ for which 
   $$
      \PR\lr{E_{\infty,r}}\leq \exp\lr{-c r^\frac{c'}{\log\log r}}.
   $$
\end{proposition}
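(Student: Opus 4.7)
The plan is to apply Lemma~\ref{lem:givenscales} and union-bound over all sequences of scales $(k_1,\ldots,k_\ell)$ that an infected well separated path from the origin with reach at least $C_\FPP r$ could use. Fix a large constant $c_0$ (depending on $c$ and $d$) and choose $L_1,p$ so that Lemma~\ref{lem:givenscales} applies with constant $c_0$. By property~(ii) of a well separated path, $\bigcup_j Q_{k_j}^\neighs(i_j)$ is a connected set containing the origin, so by the triangle inequality its diameter in the norm $|\cdot|$ is at most $C_1 \sum_j R_{k_j}^\couter$ for some $C_1 = C_1(d)$. Thus a reach of at least $C_\FPP r$ forces the necessary condition $\sum_j R_{k_j}^\couter \geq c_2 r$ for some $c_2 = c_2(d) > 0$, and the quantity to bound is
\begin{equation*}
   \PR(E_{\infty,r}) \leq \sum_{\ell\geq 1}\sum_{\substack{(k_1,\ldots,k_\ell) \\ \sum_j R_{k_j}^\couter \geq c_2 r}} \exp\lr{-c_0\sum_{j=1}^\ell 2^{k_j}}.
\end{equation*}

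Define the critical scale $K^\ast = \lfloor c_3 \log r / \log\log r \rfloor$, with $c_3 = c_3(d) > 0$ chosen small enough that $R_{K^\ast - 1}^\couter \leq r^{1/2}$; this is possible since \eqref{eq:rkrel2} gives $\log R_k^\couter = O(k\log k)$. Correspondingly, $2^{K^\ast} \geq r^{c'/\log\log r}$ for some $c' = c'(d) > 0$. Split the sum according to whether the maximum scale along the path satisfies $K \geq K^\ast$ (Case~1) or $K < K^\ast$ (Case~2). In Case~1, $\sum_j 2^{k_j} \geq 2^{K^\ast}$. Choosing $c_0$ so large that $\sigma := \sum_{k\geq 1} e^{-c_0 2^k} \leq 1/4$, and union-bounding over the location of the maximum-scale cell and over the other scales,
\begin{equation*}
   \sum_{\ell \geq 1} \ell \lr{\sum_{k \geq K^\ast} e^{-c_0 2^k}} \sigma^{\ell-1} \leq c_4\, e^{-c_0 2^{K^\ast}}.
\end{equation*}
In Case~2, each $R_{k_j}^\couter \leq r^{1/2}$, so the reach constraint forces $\ell \geq c_2 r^{1/2}$, giving
\begin{equation*}
   \sum_{\ell \geq c_2 r^{1/2}} \sigma^\ell \leq c_5\, e^{-c_6 r^{1/2}},
\end{equation*}
where $c_6$ grows with $c_0$. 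Taking $c_0 \geq 2c$, Case~1 dominates and yields $\PR(E_{\infty,r}) \leq \exp\lr{-c\, r^{c'/\log\log r}}$ for $r$ sufficiently large, with the remaining range absorbed by adjusting constants.

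The main obstacle is choosing $K^\ast$ so as to balance the two cases: it must be small enough that a chain of cells of scales below $K^\ast$ reaching distance $r$ is forced to be very long (making Case~2 negligible), yet large enough that even a single cell of scale $K^\ast$ yields a tail estimate matching the target $\exp\lr{-c r^{c'/\log\log r}}$. The super-exponential growth $R_k^\couter \lesssim (k!)^{d+2}$ from \eqref{eq:rkrel2} is exactly what pins $K^\ast$ to the scale $\log r / \log\log r$, which in turn produces the unusual exponent $c'/\log\log r$ in the final bound. Controlling the union bound over scale sequences without destroying the estimate from Lemma~\ref{lem:givenscales} is handled by enlarging the constant in that lemma enough that $\sigma$ is geometrically summable.
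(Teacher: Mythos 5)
Your proof is correct, and it takes a genuinely different combinatorial route from the paper's. The paper first reduces $E_{\infty,r}$ to $E_{\kappa,r}$ (only scales below $\kappa \asymp \log r / \log\log r$) via Lemma~\ref{lem:rhoexpanded} and Lemma~\ref{lem:inf}, and then counts scale sequences by grouping them according to the weight $\phi(\Gamma) = \sum_k m_k(\Gamma) 2^k$, using the binary-string encoding $A_m \leq 2^m$ to bound the number of sequences of weight $m$; the dominant term in the paper's final bound comes from the preliminary large-scale reduction $2\exp(-a2^{\kappa-1})$. You instead apply Lemma~\ref{lem:givenscales} to every scale sequence directly and split the union bound on the maximum scale $K^*$: if $\max_j k_j \geq K^*$, a position-of-the-max argument and a geometric series over the remaining scales give $c_4 e^{-c_0 2^{K^*}}$ without ever invoking the reach constraint; if $\max_j k_j < K^*$, the reach constraint forces $\ell \gtrsim r^{1/2}$ and a geometric series over $\ell$ finishes. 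This avoids the need for the explicit appeal to Lemma~\ref{lem:rhoexpanded} and Lemma~\ref{lem:inf} inside this particular proof (they are of course still used inside Lemma~\ref{lem:givenscales}), at the cost of a slightly less tidy counting argument. Both decompositions land on the same critical scale $K^* \asymp \log r / \log\log r$, which is pinned by the super-factorial growth $R_k^\couter \lesssim (k!)^{d+2}$ from~\eqref{eq:rkrel2}, and both yield the exponent $c'/\log\log r$. One small point worth making explicit if you were to write this up in full: the sum over scale sequences is a priori over an infinite index set, so you should state clearly that $\sigma \leq 1/4$ (obtained by making $c_0$ large via the choice of $L_1$ and $p$) is what makes the overall union bound convergent, and, as with the paper's proof, the stated bound is to be read for $r$ sufficiently large, with the remaining range handled by adjusting constants.
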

\begin{proof}
   Let $A_r$ be the set of vertices of $\mathbb{Z}^d$ of distance at most 
   $C_\FPP r$ from the origin.
   Set $\delta_r = \frac{1}{(d+3)\log\log r}$ and $\kappa= \delta_r \log r$.
   For any large enough $a$ depending on $L_1$ and $p$, we have
   \begin{align*}
      &\PR\lr{\exists (k,i) \colon k\geq \kappa \text{ and } Q_k^\neighs(i)\cap A_r \neq \emptyset \text{ and } Q_k^\core(i)\cap C_k\neq\emptyset} \\
      &\leq \sum_{k=\kappa}^\infty \sum_{i\colon Q_k^\neighs(i)\cap A_r \neq \emptyset} \PR\lr{Q_k^\core(i)\cap C_k\neq \emptyset}\\
      &\leq \sum_{k=\kappa}^\infty \lr{\frac{2C_\FPP' \lr{r+\frac{6}{5}R_k^\couter+R_k}}{L_k}}^d \exp\lr{-a 2^k},
   \end{align*}
   where in the last inequality we use Lemma~\ref{lem:rhoexpanded}. Since $a$ above can be chosen as large as needed (by requiring that $L_1$ is large enough and $p$ is small enough), 
   we can choose a large enough $a$ so that $\lr{\frac{2C_\FPP' \lr{r+\frac{6}{5}R_k^\couter+R_k}}{L_k}}^d\leq \exp\lr{a2^{k-1}}$ for all $k$, yielding
   \begin{align*}
      &\PR\lr{\exists (k,i) \colon k\geq \kappa \text{ and } Q_k^\neighs(i)\cap A_r \neq \emptyset \text{ and } Q_k^\core(i)\cap C_k\neq\emptyset} \\
      &\leq \sum_{k=\kappa}^\infty \exp\lr{-a 2^{k-1}}
       \leq 2\exp\lr{-a 2^{\kappa-1}}.
   \end{align*}
   If the event above does not happen, then Lemma~\ref{lem:inf} gives that $\eta^2(t)\cap A_r \subset \bigcup_{j=1}^{\kappa-1}I_j$. 
   Hence, 
   $$
      \PR\lr{E_{\infty,r}} \leq 2\exp\lr{-a 2^{\kappa-1}} + \PR\lr{E_{\kappa,r}}.
   $$
   Let $\Gamma$ be a well separated path from the origin, with all cells of scale smaller than $\kappa$, and which has reach at least $r$.
   Define $m_k(\Gamma)$ to be the number of cells of scale $k$ in $\Gamma$. 
   Since $\Gamma$ must contain at least one cell for which its $Q^\neighs$ region is not contained in $A_r$, we have
   $$
      C_\FPP r \leq \sum_{k=1}^{\kappa-1} m_k(\Gamma) \frac{12}{5} R_k^\couter.
   $$
   Because of the type of bounds derived in Lemma~\ref{lem:givenscales}, it will be convenient to rewrite the inequality above so that the term 
   $\sum_{k=1}^{\kappa-1}2^k$ appears. Note that using~\eqref{eq:rkrel2} 
   we can set a constant $c_0\geq2$ such that $R_j^\couter\leq c_0^j (j!)^{d+2}L_1$ for all $j\geq 1$, which gives
   $$
      C_\FPP r \leq  \sum_{k=1}^{\kappa-1} m_k(\Gamma) 2^k  \frac{12 (c_0-2)^k(k!)^{d+2}L_1}{5}
      \leq \frac{12 (c_0-2)^\kappa(\kappa!)^{d+2}L_1}{5} \sum_{k=1}^{\kappa-1} m_k(\Gamma) 2^k.
   $$
   For any $\Gamma$, define $\phi(\Gamma)=\sum_{k=1}^{\kappa-1} m_k(\Gamma) 2^k$.
   We can then split the sum over all paths according to the value of $\phi(\Gamma)$ of the path. Using this, Lemma~\ref{lem:givenscales}, and the fact that 
   $\phi(\Gamma)\geq \frac{5 C_\FPP r}{12 c' (\kappa!)^{d+2}L_1}$, we have
   \begin{align*}
      \PR\lr{E_{\kappa,r}}
      \leq \sum_{m\geq \frac{5 C_\FPP r}{12 c' (\kappa!)^{d+2}L_1}}^\infty \exp\lr{-c'' m} A_m,
   \end{align*}
   where $A_m$ is the number of ways to fix $\ell$ and set $k_1,k_2,\ldots,k_\ell$ such that $\phi(\Gamma)=\sum_{j=1}^\ell 2^{k_j}=m$, and $c''$ is the constant 
   in Lemma~\ref{lem:givenscales}.
   For each choice of $\ell,k_1,k_2,\ldots,k_\ell$, we can define a string from $\{0,1\}^m$ by taking $2^{k_1}$ consecutive 0s, $2^{k_2}$ consecutive 
   1s, $2^{k_3}$ consecutive 0s, and so on and so forth. Note that each string is mapped to at most one choice of $\ell,k_1,k_2,\ldots,k_\ell$. 
   Therefore, $A_m\leq 2^m$, the number of strings in $\{0,1\}^m$. The proof is completed since 
   $c''$ can be made arbitrarily large by setting $L_1$ large enough and then $p$ small enough, and 
   $\frac{5 C_\FPP r}{12 c' (\kappa!)^{d+2}L_1}\geq \frac{5 C_\FPP r}{12 c' \kappa^{(d+2)\kappa}L_1}\geq \frac{5 C_\FPP r^\frac{1}{d+3}}{12 c' L_1}$. 
\end{proof}

\subsection{Completing the proof of Theorem~\ref{thm:fpp2}}\label{sec:completing}
\begin{proof}[Proof of Theorem~\ref{thm:fpp2}]
   We start showing that $\eta^1$ grows indefinitely with positive probability.
   Let $e_1=(1,0,0,\ldots,0)\in\mathbb{Z}^d$. Any set of vertices that separates the origin from infinity must contain a vertex of the form $b e_1$ for 
   some nonnegative integer $b$. 
   For any $b$ and $t\geq0$, let 
   $$
      f_b(t) = \PR\lr{\eta^2(t) \text{ contains $b e_1$ and separates the origin from infinity}}.
   $$
   For the moment, we assume that $b$ is larger than some fixed, large enough value $b_0$.
   Recall that $\ball{r}\subseteq [- C_\FPP' r, C_\FPP' r]^d$, which gives that 
   $b e_1 + \ball{\frac{b}{2C'_\FPP}}$ does not contain the origin.
   Hence, in order for $\eta_2(t)$ to contain $b e_1$ and separate
   the origin from infinity, $\eta_2(t)$ must contain at least a vertex of distance (according to the norm $|\cdot |$) 
   greater than $\frac{b}{2C'_\FPP}$ from $b e_1$. 
   When $\eta_2(t)$ separates the origin from infinity, it must contain a set of sites that form a connected component according to the $\ell_\infty$ norm, which itself separates the origin from infinity and 
   contains a vertex of distance (now according to the norm $|\cdot |$) greater than $\frac{b}{2C'_\FPP}$ from $b e_1$. 
   This connected component implies the occurence of the event in Proposition~\ref{pro:path}, hence
   $$
      f_b(t) \leq \exp\lr{-c \lr{\frac{b}{2C'_\FPP}}^\frac{c'}{\log\log b}}.
   $$
   Note that, as needed, the bound above does not depend on $t$; this will allows us to derive a bound for the survival of $\eta^1$ that is uniformly bounded away from $0$ as $t$ grows to infinity.
   Note also that the constant $c$ from Proposition~\ref{pro:path} can be made arbitrarily large by setting $L_1$ and $p$ properly. Therefore,
   $\sum_{b=b_0}^\infty f_b(t)$ can be made smaller than $1$, and in fact goes to zero with $b_0$. 
   Regarding the case $b\leq b_0$, for each $k\geq 1$ let $\mathcal{K}_k$ be the set of $(k,i)$ such that $R_k^\couter(i)\cap \{e_1,2e_1,\ldots,b_0 e_1\}$.
   Note that there exists a constant $c_b$ depending on $b_0$ such that the cardinality of $\mathcal{K}_k$ is at most $c_b$ for all $k$.
   Then, using Lemma~\ref{lem:rhoexpanded}, we have
   $$
      \PR\lr{\exists k \colon \left(\bigcup\nolimits_{(k,i)\in\mathcal{K}_k}Q_k^\core(i)\right) \cap C_k \neq \emptyset}
      \leq \sum_{k\geq 1} c_b \exp\left(-a 2^k\right),
   $$
   which can be made arbitrarily small since $a$ can be made large enough by choosing $L_1$ large and $p$ small. This concludes this part of the proof, since 
   $$
      \PR\lr{\eta^1 \text{ grows indefinitely}} \geq 1 - \sum_{k\geq 1} c_b \exp\left(-a 2^k\right) - \limsup_{t\to\infty}\sum_{b=b_0}^\infty f_b(t).
   $$
   
   Now we turn to the proof of positive speed of growth for $\eta^1$.
   Note that $\eta^1\cup\eta^2$ is stochastically dominated by a first passage percolation process where the passage times are at least i.i.d.\ exponential
   random variables of rate $2$, because $\eta^2$ is slower than a first passage percolation of exponential passage times of rate $1$. 
   Then, by the shape theorem we have that there exists a constant $c>0$ large enough such that 
   $$
      \PR\lr{\eta^1(t)\cup\eta^2(t) \subset [-ct,ct]^d}\to 1 \quad\text{as $t\to\infty$}.
   $$
   Now fix any $t$, take $c$ as above, and set $\kappa= 1+\frac{\log t}{\lr{\log\log t}^2}$.
   For any large enough $a$ depending on $L_1$ and $p$, we have
   \begin{align*}
      &\PR\lr{\exists (k,i) \colon k\geq \kappa \text{ and } Q_k^\neighs(i)\cap [-ct,ct]^d \neq \emptyset \text{ and } Q_k^\core(i)\cap C_k\neq\emptyset} \\
      &\leq \sum_{k=\kappa}^\infty \sum_{i\colon Q_k^\neighs(i)\cap [-ct,ct]^d \neq \emptyset} \PR\lr{Q_k^\core(i)\cap C_k\neq \emptyset}\\
      &\leq \sum_{k=\kappa}^\infty \lr{\frac{2C_\FPP' \lr{ct+\frac{6}{5}R_k^\couter+R_k}}{L_k}}^d \exp\lr{-a 2^k}
      \leq \sum_{k=\kappa}^\infty \exp\lr{-a 2^{k-1}}
       \leq 2\exp\lr{-a 2^{\kappa-1}},
   \end{align*}
   where in the second inequality we use Lemma~\ref{lem:rhoexpanded}, and the third inequality follows because $a$ can be chosen large enough in Lemma~\ref{lem:rhoexpanded}. 
   The above derivation allows us to restrict to cells of scale smaller than $\kappa$.
   Note that since there are no contagious set of scale $\kappa$ or larger intersecting $[-ct,ct]^d$, the spread of 
   $\eta^1(t)$ inside $[-ct,ct]^d$ stochastically dominates a first passage percolation process of rate $\lambda^1_\kappa$. Thus, disregarding 
   regions taken by $\eta^2$, we can set a sufficiently small 
   constant $c'>0$ so that, at time $t$, $\bar\eta^1$ will 
   contain a ball of radius $2c't$ around the origin with probability at least $1-\exp\lr{-c'' t^\frac{d+1}{2d+4}}$ 
   for some constant $c''$, by Proposition~\ref{pro:kesten}. 
   The only caveat is that, at time $t$, there may be regions of scale smaller than $\kappa$ that are taken by $\eta^2$ and intersects the boundary of 
   $\ball{2c't}$. If we show that such regions cannot intersect $\bddi \ball{c't}$, then we have that 
   the probability that $\eta^1$ survives up to time $t$ but $\bar\eta^1(t)$ 
   does not contain a ball of radius $c't$ around the origin is at most $1-2\exp\lr{-a 2^{\kappa-1}}-\exp\lr{-c'' t^\frac{d+1}{2d+4}}$.
   This is indeed the case, since we can take a constant $c'''$ such that any cell of scale smaller than $\kappa$ has diameter at most 
   $$
      c'''(\kappa!)^{d+2} \leq c''' \exp\lr{(d+2)\kappa\log\kappa}\leq c'''\exp\lr{2(d+2)\frac{\log t}{\log\log t}}<c't,
   $$
   where the inequalities above hold for all large enough $t$, completing the proof.
\end{proof}
\section{From \mdla{} to \fpphe{}}\label{sec:mdla}
Here we show how to use the proof scheme for \fpphe{} from Section~5 to establish Theorem~\ref{thm:mdla}.
The relation between \fpphe{} and \mdla{} is very delicate, and we will need to introduce another process, which we call the \emph{$h$-process}. 
For the sake of clarity, this section is split into a few subsections.

\subsection{Dual representation and Poisson clocks}\label{sec:dual}
We start by recalling the dual representation of the exclusion process. In this dual representation, 
vertices without particles are regarded as hosting another type of particle, called \emph{holes}, while vertices hosting an original particle are seen as unoccupied. 
Using the terminology of the dual representation, in \mdla{}, 
holes perform a simple exclusion process among themselves, 
where they move as simple symmetric random walks obeying the exclusion rule (jumps to vertices already occupied by a hole or by the aggregate are suppressed).
Then the growth of the aggregate is equivalent to a first passage percolation process 
which expands along its boundary edges at rate 1, but with the additional feature that the aggregate does not occupy vertices that are occupied by holes.

To be more precise, we now define \mdla{} in terms of \emph{Poisson clocks}. 
A Poisson clock of rate $\nu$ is a clock that rings infinitely many times, and
such that the time until the first ring, as well as the time between any two consecutive rings, are given by independent exponential random variables of rate 
$\nu$.
Even though edges of $\mathbb{Z}^d$ have so far always been considered as undirected, we will need to assign an independent Poisson clock of rate 1 to each 
\emph{oriented} edge $(x \to y)$.
Then the evolution of \mdla{} is as follows. 
When the clock of $(x \to y)$ rings, if $x$ is occupied by a hole and $y$ is unoccupied, the hole jumps from $x$ to $y$. 
If $x$ is occupied by the aggregate and $y$ is unoccupied, then the aggregate occupies $y$. 
In any other case, nothing is done.
Henceforth, the Poisson clocks used to construct \mdla{} will be referred to as the \emph{\mdla-clocks}. 

\subsection{\mdla{} with discovery of holes}\label{sec:dischole}
We give a different representation of \mdla, which we refer to as \emph{\mdla{} with discovery of holes}. 
Each vertex of $\mathbb{Z}^d$ will either be occupied by the aggregate, be occupied by a hole, or be unoccupied.
As before, the aggregate starts from the origin. 
However, unlike before, each vertex of $\mathbb{Z}^d\setminus\{0\}$ is initially \emph{unoccupied}, and is assigned a non-negative integer value, which 
is given by 
an independent random variable having value $i\geq0$ with probability $(1-\mu)^i\mu$. This value represents the number of holes that can be born at that vertex.

More precisely, when the \mdla-clock of an edge $(x \to y)$ rings, a few things may happen.
\begin{itemize}
   \item If $x$ hosts a hole and $y$ is unoccupied, the hole jumps from $x$ to $y$.
   \item If $x$ belongs to the aggregate, $y$ is unoccupied and the value of $y$ is $0$, then the aggregate occupies $y$. 
   \item If $x$ belongs to the aggregate, $y$ is unoccupied and the value of $y$ is $i\geq 1$, then the value of $y$ is changed to $i-1$, a hole is born at $y$ (so $y$ becomes occupied),
         and the aggregate does not occupy $y$.
  \item In any other case, nothing happens.
\end{itemize}
Note that holes move
independently of the values of the vertices, and perform continuous time, simple symmetric random walks (jumping at the time of the \mdla-clocks) obeying the exclusion rule; that is, 
whenever a hole attempts to jump onto a vertex already occupied by a hole or by the aggregate, the jump is suppressed.
Note that this process is equivalent to the description of \mdla{} with the dual representation of the exclusion process. The only difference is that, instead of placing all holes at time $0$, holes are added 
one by one as the process evolves. More precisely, holes are born as the aggregate tries to occupy 
unoccupied vertices of value at least $1$.  
\subsection{Backtracking jumps and overall strategy}
The main idea we will use to compare \mdla{} and \fpphe{} is the following. 
Regardless of the location of a hole, if the 
hole jumps from a vertex $x$ to a vertex $y$, with positive probability the \mdla-clock of $(y \to x)$ rings before the other $2(2d-1)$ \mdla-clocks involving edges of the form $(\cdot \to x)$ or $(y \to \cdot)$. 
This causes the 
hole to jump back to $x$ before the hole can jump to any other vertex adjacent to $y$ or before the aggregate or another hole can occupy $x$. 
We call this a \emph{backtracking jump}. This type of jump intuitively gives that the rate at which a hole leaves a set of vertices is strictly smaller than $1$. 
In other words, holes are slower than the aggregate.
A natural approach is to set $\lambda<1$ in \fpphe{} to represent the rate at which holes move (taking into consideration backtracking jumps), and then couple \mdla{} and \fpphe{} so that the following properties hold.
\begin{enumerate}
   \item The seeds of $\eta_2$ are the vertices of value at least $1$ in \mdla. 
   \item The aggregate contains $\eta_1$ at all times.
   \item For all $t\geq 0$, the holes that have been discovered by time $t$ are contained inside $\eta_2(t)$. 
\end{enumerate}

Despite the above idea being relatively simple, the following delicate issue prevents this to be made into a rigorous argument.  
Suppose that the above three properties hold up to time $t$, and assume that at time $t$ 
the aggregate of \mdla{} contains vertices that do not belong to 
$\eta_1(t)$. Hence, at a later time the aggregate may discover a hole at a vertex $x$ which is at the boundary of the aggregate but is 
not at the boundary of $\eta_1$. In other words, the aggregate may discover a hole at a vertex $x$ whose seed cannot be activated since $x$ is yet unreachable by $\eta_1$. 
At this time, property 3 would cease to hold. 

To go around the above issue, we will employ a coupling argument to show that
\mdla{} stochastically dominates \fpphe{} \emph{locally}. In particular, we will use that coupling to show that the encapsulation procedure used for \fpphe{} (via Proposition~\ref{pro:encapsulate2}) 
works as well for \mdla. 
Then, the multi-scale machinery developed in Section~\ref{sec:proof} can be used to obtain that 
each cluster of 
holes get encapsulated by the aggregate at some finite (possibly large) scale.
For this, we will use the fact that 
the encapsulation procedure we did for \fpphe{} in Proposition~\ref{pro:encapsulate2} is implied by 
the occurrence of a \emph{monotone} event $F$, which is increasing with respect to the passage times of type~2 and decreasing with respect to the passage times of type~1.
\subsection{Coupling of the initial configuration}\label{sec:initialconfig}
We now formalize the coupling of the initial configurations of \mdla{} and \fpphe, as suggested in the previous section. 
For each vertex $x\in\mathbb{Z}^d\setminus\{0\}$, note that the probability that $x$ is assigned a value at least 1 in \mdla{} with discovery of holes is $\sum_{i=1}^{\infty}(1-\mu)^i \mu = 1-\mu$. 
Then, we set $p=1-\mu$ so that we can couple the vertices with value at least $1$ with the location of the type-2 seeds of \fpphe{}. From now on, for each vertex of value at least $1$, we will refer to it 
as a \emph{seed}, regardless of whether we are talking about \mdla{} or \fpphe.

\subsection{The $h$-process}\label{sec:prepcoupling}
We will not actually couple \mdla{} with \fpphe, but we will couple \mdla{} with
another process $\{h_t\}_t$, which will be a growing subset of $\mathbb{Z}^d$.
We call this process the \emph{$h$-process}.
The $h$-process will be constructed using the \mdla-clocks and the seeds, where the seeds have been coupled with \mdla{} as described in Section~\ref{sec:initialconfig}.

When a vertex $x$ belongs to $h_t$, we will say that $x$ is \emph{infected}. 
To avoid confusion, we will not say that $x$ is occupied by $h_t$ since, as we explain later, a vertex that is occupied by the aggregate can also be infected.
Our goal with the $h$-process is to obtain that the holes that have already been discovered at time $t$ are contained in $h_t \cup \bddo h_t$, and 
the ones in $\bddo h_t$ are the holes that will jump back to $h_t$ (in a backtracking jump).

At time $0$ we set $h_0=\emptyset$, and let the aggregate spread using the \mdla-clocks using the representation with discovery of holes.
The $h$-process will evolve according to three operations: birth, expansion and halting upon encapsulation. 
\begin{itemize}
   \item Birth. If at time $t$ a hole is discovered by the aggregate inside a cluster $C\subset\mathbb{Z}^d$ of seeds\footnote{For the sake of clarify, given any set of vertices $X\subset \mathbb{Z}^d$, 
      when we say that $C$ is a \emph{cluster} of $X$, we mean that $C\subset X$ and $\bddo C \cap X = \emptyset$.},
      then we infect $C$; that is, we add to $h_t$ the whole cluster $C$ of seeds. 
   
   \item Expansion. For each unoriented edge $(x,y)$, we will define a passage time $\tau_{x,y}$ (which we will specify later on and will depend on the evolution of \mdla). 
      So if $x$ gets infected at time $t$, then $x$ infects $y$ at time $t+\tau_{x,y}$;
      note that $y$ could get infected before $t+\tau_{x,y}$ if a neighbor of $y$ different than $x$ infects $y$.
      
   \item Halting upon encapsulation. The $h$-process is allowed to infect vertices that are occupied by the aggregate. 
      However, if at some moment a cluster $C$ of $h_t$ is separated from infinity by the aggregate, which means that any path from $C$ to infinity intersects $\mathcal{A}_t$, then 
      $h_t$ will not infect any vertex of $\bddo C$ that already belongs to the aggregate\footnote{If the aggregate decides to occupy a site $x$ at the same time as the $h$-process decides to infect $x$, then we assume that 
      the aggregate occupies $x$ immediately \emph{before} the $h$-process infects $x$. This is just a convenience to take care of the following situation. 
      Assume that $x$ is a neighbor of a cluster of infected sites which is disconnected from infinity by the aggregate but $x$ itself is not disconnected from infinity by the aggregate. This implies that the aggregate occupies the 
      neighbors of $x$ that belong to the infected cluster. 
      If at this time the aggregate and the $h$-process both decide to occupy $x$, we obtain that the aggregate does so first, and then the $h$-process does not infect $x$ due to the halting upon encapsulation operation.}. 
      This is to guarantee that a cluster of the $h$-process is confined to a finite set when it gets encapsulated by the aggregate.
\end{itemize} 


Now we introduce some notation. 
For each vertex $x$, let $\mathcal{E}_x$ be the set of (unoriented) edges incident to $x$, and let $\mathcal{E}_x^\to$ (resp., $\mathcal{E}_x^\leftarrow$) be the set of oriented edges going out of (resp., coming into) $x$.
For each edge $(x \to y)$, let 
\begin{equation}
   \mathcal{E}_{x\to y}=\mathcal{E}_x^\leftarrow\cup\mathcal{E}_y^\to,
   \label{eq:exy}
\end{equation}
which includes $(y \to x)$ but not $(x\to y)$. 
Let 
\begin{equation}
   \text{$M=4d-1$ and note that, given an edge $(x\to y)$, we have that $M=|\mathcal{E}_{x\to y}|$.}
   \label{eq:defm}
\end{equation}
Later, $\frac{1}{M}$ will be a lower bound on the probability that a hole performs a backtracking jump. 

We will use the convention that if we write $(x \to y)\in\bdde h_t$, we mean that $x \in h_t$ and $y\not\in h_t$.
We will update a set of edges $\mathcal{H}(t)$ as the $h$-process evolves, starting from $\mathcal{H}(0)=\emptyset$. 
Moreover, for each $(x\to y)\in\mathcal{H}(t)$, we will associate an independent Bernoulli random variable $\mathfrak{B}_{x\to y}$ of parameter $1/M$. 
If needed, the random variable $\mathfrak{B}_{x\to y}$ will be redraw independently during the evolution of the $h$-process.
Once $\mathcal{H}(t)$ is specified, we define
\begin{align}
   \mathcal{H}_B(t) = \bigcup\nolimits_{(u \to v)\in \mathcal{H}(t)} \mathcal{E}_{u \to v},
   \quad\text{ and }\quad
   \mathcal{H}_B^*(t) = \bigcup\nolimits_{\substack{(u \to v),(u' \to v')\in \mathcal{H}(t) \\(u\to v)\neq (u'\to v')}} \mathcal{E}_{u \to v}\cap \mathcal{E}_{u' \to v'}.
   \label{eq:defsb}
\end{align}

\subsection{Evolution of the $h$-process}\label{sec:hexp}
Here we will describe how the $h$-process uses the \mdla-clocks to evolve. 
Our description here will be precise, but will not enter in the details needed to define the passage times of the $h$-process. This will be carried out in Section~\ref{sec:coupling}.

Start from time $0$, where we have $h_0=\emptyset$ and $\mathcal{H}(t)=\emptyset$.
From this time, we let \mdla{} evolve using its \mdla-clocks.
If at some time $t$ \mdla{} tries to occupy a seed (that is, \mdla{} discovers a hole) inside some cluster $C\subset\mathbb{Z}^d$ of seeds, the $h$-process undergoes a birth operation and we set $h_t=C$.
At this moment, we continue to let \mdla{} evolve using its \mdla-clocks. If new holes are discovered, new births take place and clusters are added to the $h$-process (that is, new clusters are infected). 

We will now discuss all possibilities that could happen for the expansion of the $h$-process. 
In all cases below, we will assume that the expansion of the $h$-cluster is happening in an 
infected cluster that is not disconnected from infinity by the aggregate. 
Otherwise, the halting upon encapsulation would already have happened to that cluster, which would prevent it from expanding.

The $h$-process only expands when an edge at the boundary of the $h$-process or an edge from $\mathcal{H}_B$ rings.
(The edges in $\mathcal{H}_B$ are needed to verify backtracking jumps, and ``B'' in the subscript actually stands for backtracking.)
If an edge that is internal to the $h$-process (that is, both of its endpoints are already infected) rings, then the $h$-process does not change, even if that causes new holes to be discovered.
Similarly, if an edge that is external to the $h$-process (that is, both endpoints are not infected) and does not belong to $\mathcal{H}_B$ rings, and no new hole get discovered by this operation, then the 
$h$-process does not change.

Now we assume that an edge $(x\to y)$ from the boundary of the $h$-process or from $\mathcal{H}_B$ rings at time $t$, and discuss what occurs with the $h$-process at this time. We split our discussion into 
three cases, and at the end explain two particular situations.
Let $s<t$ be the last time before $t$ that the $h$-process changed.

\subsubsection{Case 1: a hole jumps out of the $h$-process}\label{sec:holeout}
This corresponds to $(x\to y)\in \bddo h_s$ with a hole at $x\in h_s$ and $y\not\in h_s$ unoccupied at time $t-$.
Thus, the ring of $(x\to y)$ causes the hole to jump from $x$ to $y$. 

It could be the case that there is already an edge $(y'\to y)\in \mathcal{H}(t)$ with $y'\neq x$. 
If this is the case, we simply do nothing; this case will be better discussed in Section~\ref{sec:revisit}. 
Otherwise, if there is no such edge, we add $(x\to y)$ to $\mathcal{H}(t)$ to verify whether the hole will do a backtracking jump to $x$. 
Moreover, we draw (independently from previous values that this random variable could have assumed) the Bernoulli random variable $\mathfrak{B}_{x\to y}$ of parameter $1/M$. 
If $\mathfrak{B}_{x\to y}=0$, which means that the hole will not backtrack to $x$, then we infect $y$ at time $t$. 

\subsubsection{Case 2: verification of backtracking jumps}\label{sec:backtrack}
This corresponds to $(x\to y)\in \mathcal{H}_B(s)$. 
Let $(u\to v)$ be the edge from $\mathcal{H}(s)$ such that $(x\to y)\in \mathcal{E}_{u\to v}\subset \mathcal{H}_B(s)$, and assume that $(u\to v)$ was added to $\mathcal{H}$ at time $t'\leq s$.
Note that, from Section~\ref{sec:holeout}, this was done because a hole jumped from $u$ to $v$ at time $t'$.  
Then while no clock from $\mathcal{E}_{u \to v}$ rings, $u$ will remain unoccupied and the hole will remain at $v$.  

When the clock of an edge $(x\to y)$ from $\mathcal{E}_{u \to v}$ rings at time $t$, then the first thing we do is to remove $(u\to v)$ from $\mathcal{H}(t)$. 
We will say that the possibility of a backtracking jump through $(u\to v)$ has been verified.
We then couple the value of $\mathfrak{B}_{u\to v}$ with the \mdla-clocks so that if $\mathfrak{B}_{u\to v}=1$, we have that the first clock to ring among the ones from $\mathcal{E}_{u \to v}$ is $(v \to u)$. 
If this happens, then $(x\to y)=(v\to u)$ so, at time $t$, the hole backtracks to $u$. Note that both $u$ and $v$ are already infected. In this case, nothing else needs to be done. 

On the other hand, if $\mathfrak{B}_{u\to v}=0$, the backtracking will not happen and $(x\to y)\in \mathcal{E}_{u\to v}\setminus (v \to u)$.
Note that the probability that $(x\to y)$ is equal to a given $(w\to z)\in \mathcal{E}_{u\to v}\setminus (v\to u)$ is exactly 
\begin{equation}
   \frac{1}{M-1}.
   \label{eq:case4}
\end{equation}
Note also that $v$ is already infected. 
If $(x\to y)\in\mathcal{E}_u^\leftarrow$, then $u$ could get occupied by the aggregate or by another hole, which could prevent the hole from $v$ to jump back to $u$ when the clock $(v\to u)$ rings. 
In this case, we do not need to do anything else. 

Finally, if $(x \to y)\in \mathcal{E}_v^\rightarrow$, then the hole at $v=x$ may jump to $y$, if $y$ is unoccupied. 
If, in addition, we have that $y\not\in h_s$, then the hole jumped out of the $h$-process, so we perform 
the steps described in Section~\ref{sec:holeout} to $(x\to y)$ so that we can later verify the possibility of a backtracking jump through $(x\to y)$. 
In particular, we add $(x \to y)$ to $\mathcal{H}(t)$, sample $\mathfrak{B}_{x\to y}$, and infect $y$ if $\mathfrak{B}_{x\to y}=0$. 

The purpose of the set $\mathcal{H}$ is to keep track of the edges over which a backtracking jump can happen. 
It will hold that
\begin{align}
   &\text{at all times $s'$, $\mathcal{H}(s')$ is a set of disjoint edges (with no endpoint in common)}\nonumber\\
   &\text{such that for any $(w \to z)\in \mathcal{H}(s')$, $w\in h_s'$ is unocuppied in \mdla{}}. 
   \label{eq:sh}
\end{align} 
The above is quite straightforward from our description above, but we will actually prove it in Lemma~\ref{lem:sh}, 
after we describe precisely how the passage times are constructed. 
We remark that in~\eqref{eq:sh} we do not require $z$ to host a hole. This is because of a corner case that we need to handle carefully, and which we will explain in Section~\ref{sec:hbs}.

\subsubsection{Case 3: expansion without jump of holes}\label{sec:noholes}
Here we assume that $(x\to y)\in \bddo h_t\setminus H_B(s)$ and such that one of the following conditions happens.
\begin{itemize}
\item $x\in h_{s}$ is unoccupied at time $t-$.
\item $x$ is occupied by the aggregate at time $t-$.
\item $x$ is occupied by a hole, but $y$ is occupied by either a hole or the aggregate at time $t-$ (preventing the hole from $x$ to jump to $y$).
\end{itemize}
(The case of $x$ being occupied by a hole and $y$ unoccupied is covered by Section~\ref{sec:holeout}.)
The above three situations bring little trouble to us since it does not cause any hole to jump, so we will simply choose to
infect $y$ with probability $\frac{M-1}{M}<1$, otherwise we do nothing. 
This choice is to guarantee that the passage times $\tau_{\cdot,\cdot}$ stochastically dominate (but are not equal to) exponential random variables of rate $1$ in this case.

\subsubsection{Edges in $\mathcal{H}^*_B$}\label{sec:hbs}
We need to give special attention to the set $\mathcal{H}^*_B$. 
Suppose now that we have carried the above process up to a time $t$, when it happens that $\mathcal{H}^*_B(t)\neq\emptyset$. Note that
\begin{align}
   \text{if $(x \to y)\in \mathcal{H}_B^*(t)$, there exist a neighbor $x'$ of $x$ and a neighbor $y'$ of $y$ such that}\nonumber\\
   \text{$(x' \to x),(y \to y') \in \mathcal{H}(t)$, and hence $y \in h_t$ and $(x\to y) \in \mathcal{E}_{x'\to x} \cap \mathcal{E}_{y\to y'} \subset \mathcal{H}_B(t)$.}
   \label{eq:doubleclock}
\end{align}
Note that if $t$ is the first time that $\mathcal{H}^*_B(t)\neq \emptyset$, then both $x$ and $y'$ host a hole.
The above gives that $(x\to y)$ is involved in the backtracking jumps of both $(x' \to x)$ and $(y \to y')$, which is in conflict with the fact that 
$\mathfrak{B}_{x' \to x}$ and $\mathfrak{B}_{y \to y'}$ are independent.

To solve this, for each $(x\to y)\in\mathcal{H}_B^*(t)$, we will consider two Poisson clocks:  
the actual \mdla-clock, which will be associated to the backtracking jump of $(y \to y')$, and a \emph{fake-clock}, which will be associated to the backtracking jump of $(x' \to x)$. 
So, if $\mathfrak{B}_{x' \to x}=1$, it means that the clock of $(x \to x')$ will ring before the \mdla-clocks of 
$\mathcal{E}_{x'\to x}\setminus (x\to y)$ and before the fake-clock of $(x \to y)$. 
Similarly, if $\mathfrak{B}_{y \to y'}=1$, it means that the clock of $(y' \to y)$ will ring before the \mdla-clocks of 
$\mathcal{E}_{y \to y'}$. 
The evolution of \mdla{} simply ignores the fake-clocks. Since the fake-clocks and the \mdla-clocks are independent, there is no conflict 
with the independence of $\mathfrak{B}_{x' \to x}$ and $\mathfrak{B}_{y \to y'}$. Now we explain why this does not create other problems.

If the \mdla-clock of $(x \to y)$ rings, we say that a clock from $\mathcal{E}_{y\to y'}$ rings, whereas when the fake-clock of 
$(x \to y)$ rings, we say that a clock from $\mathcal{E}_{x'\to x}$ rings.
Assume that the first clock to ring among the \mdla-clocks and fake-clocks of $\mathcal{E}_{x\to y}$ is the 
\mdla-clock of $(x \to y)$. Let $s>t$ be the time at which that clock rings, and assume that this is the first clock to ring among the clocks of $\mathcal{E}_{x'\to x}$ and $\mathcal{E}_{y\to y'}$.
Note that in this case we have $\mathfrak{B}_{y \to y'}=0$.
Then, the hole that is in $x$ jumps to $y$, and we perform the steps described in Section~\ref{sec:backtrack} for the backtracking jump of $(y \to y')$ when $\mathfrak{B}_{y \to y'}=0$.
No action is taken with regards to the backtracking jump of $(x'\to x)$. In particular, we have that $(y\to y') \not\in \mathcal{H}(s)$ and, more crucially, we have that $(x'\to x) \in \mathcal{H}(s)$
even if there is no hole at $x$. (This is the reason why in~\eqref{eq:sh} we have not required $y$ to host a hole.)

The fact that $(x'\to x)$ remained in $\mathcal{H}(s)$ will not cause problems because the hole that was in $x$ jumped inside $h_s$ (because $y\in h_t \subseteq h_s$). 
So, in some sense, that hole did backtrack to the $h$-process. 
We will later still process the backtracking jump of $(x'\to x)$ even if there may not be a hole at $x$ (which just means that no hole will jump, but 
the $h$-process may still be updated according to the decision of a backtracking jump). 
For example, if $\mathfrak{B}_{x' \to x}=1$, we will assume that there is a backtracking jump over $(x'\to x)$ causing $x$ not to be added to the $h$-process, which remains to be true 
even if $x$ does not host a hole. 

The fake-clock of $(x\to y)$ will exist while $(x\to y)\in\mathcal{H}_B^*$, that is,
while both $(x'\to x)$ and $(y \to y')$ belong to $\mathcal{H}$. When this ceases to be true, the fake-clock of $(x \to y)$ is simply deleted and we will only keep track of its \mdla-clock.
\begin{align}
   \text{In the following, the term \emph{the clocks of $\mathcal{E}_{x\to y}$} means the \mdla-clocks and the fake-clocks}\nonumber\\
   \text{associated to the edges of $\mathcal{E}_{x \to y}$ when considering the backtracking jump of $(x\to y)$.}
   \label{eq:clockconvention}
\end{align} 


\subsubsection{Holes revisiting uninfected vertices}\label{sec:revisit}
Consider the setting in the previous section, where $(x'\to x)\in \mathcal{H}(s)$ but there is no hole at $x$. 
Note that if $\mathfrak{B}_{x'\to x}=1$ (so that $x\not\in h_{s}$), 
before the backtracking jump of $(x'\to x)$ is processed (that is, before the \mdla-clocks of $\mathcal{E}_{x'\to x}\setminus (x\to y)$ and the fake-clock of $(x\to y)$ ring), it could happen that a hole jumps from
a vertex $x''$ to $x$. 
Note that $x'' \neq x'$, because $x'$ remained unoccupied from the time the hole jumped from $x'$ to $x$ since $(x'\to x)$ is still in $\mathcal{H}$.
Since $(x''\to x)\not\in \mathcal{E}_{x'\to x}$, this jump does not affect the backtracking jump of $(x'\to x)$. 
But since $x$ is not infected, the hole just jumped out of the $h$-process. 
Suppose that this happens at some time $s''$. This is the situation explained in Section~\ref{sec:holeout}, when nothing needs to be done to the $h$-process. 
The reason is that this hole just occupied the place of the yet to be verified backtracking jump of $(x'\to x)$. Thus we only need to wait that backtracking to be processed.

Note that it can also happen that much later $x$ is still not infected and a hole jumps from $x'$ to $x$ again. 
But, as we explained above, this can only happen after the initial backtracking jump of $(x'\to x)$ has been proceed. 
Since we had that $\mathfrak{B}_{x'\to x}=1$ (so that $x$ remained uninfected), 
the second jump of a hole from $x'$ to $x$ can only happen after the clock of the edge $(x\to x')$ rings (because this happens before any edge from $\mathcal{E}_{x'}^{\leftarrow}$ rings, so $x'$ remained unoccupied).
When the edge $(x\to x')$ rings, the memoryless property of exponential random variables guarantees that the rings of the clocks in $\mathcal{E}_{x'\to x}$ are from this moment independent of the past.
Note that at this time it also happens that the value of $\mathfrak{B}_{x'\to x}$ is redrawn independently, so this new hole jumping to $x$ will have no correlation with the previous backtracking jump through $(x'\to x)$.

\subsection{Construction of the passage times of the $h$-process}\label{sec:coupling}
In the previous section we described how the $h$-process uses the \mdla-clocks to evolve. 
Here we will use the discussion from the previous section to describe the construction of the passage times of the $h$-process. 

For each (unoriented) edge $(x,y)$, we will construct an ordered list 
$\Pi_{x,y}=\big(\Pi_{x,y}^{(1)},\Pi_{x,y}^{(2)},\ldots,\Pi_{x,y}^{(\kappa_{x,y})}\big)$,
where the $\Pi_{x,y}^{(i)}$ will be independent random variables, and the number of elements $\kappa_{x,y}$ in $\Pi_{x,y}$ will also be a random variable.  
Recalling that $\tau_{x,y}$ is the passage time of the $h$-process through the edge $(x,y)$, we will construct the $h$-process so that 
\begin{equation}
   \tau_{x,y}=\sum_{i=1}^{\kappa_{x,y}} \Pi_{x,y}^{(i)}.
   \label{eq:tau}
\end{equation}

Suppose the $h$-process has been constructed up to time $t$, and assume that 
\begin{equation}
   \text{the set of holes discovered up to time $t$ is contained in $h_t\cup\bddo h_t$.}
   \label{eq:prophole}
\end{equation}
Assume also that 
\begin{align}
   &\text{for any $y\in \bddo h_t$ that hosts a hole, there is a unique $x$ such that $(x \to y)\in\mathcal{H}_t$.}\nonumber\\
   &\text{Moreover, in the past a hole jumped to $y$ from $x$, and the possibility of} \nonumber\\
   &\text{a backtracking jump of that hole has not yet been verified.}
   \label{eq:propbt}
\end{align}
The last property above means that if the hole jumped from $x$ to $y$ at some time $s\leq t$, then during $(s,t]$ the clocks of $\mathcal{E}_{x \to y}$ have not rung (but it could be the case that a fake-clock from 
$\mathcal{E}_{x\to y}$ has rung, see the discussions in Sections~\ref{sec:hbs} and~\ref{sec:revisit}).
We will use the convention that $\bddo h_t$ gives the outer boundary of $h_t$ that can be infected by the $h$-process. (Recall that vertex at the boundary of $h_t$ that is occupied by the aggregate 
does not get infected if the corresponding cluster of 
the $h$-process is separated from infinity by the aggregate of \mdla.)

We let \mdla{} evolve according to its clock until the first time $t+W$ at which either of two events happen:
\begin{enumerate}
   \item A birth operation takes place (see the definition in Section~\ref{sec:prepcoupling}). Note that in this case new vertices are added to the $h$-process, so $h_{t+W}\neq h_t$.
   \item A clock (regardless of whether it is a \mdla-clock or a fake-clock) from $\bdde h_t \cup \mathcal{H}_B(t)$ rings. 
       In this case, we will not yet observe which of these clocks rang. We will call this case a \emph{potential expansion operation}.
\end{enumerate}

\subsubsection{Birth Operation or addition of vertices to the $h$-process}\label{sec:ptbirth}
Suppose that at time $t+W$ an (unoriented) edge $(x,y)$ becomes part of $\bdde h_{t+W}$; assume without loss of generality that this is because $x$ got infected at time $t+W$. 
Then we create a variable $\Lambda_{x,y}$ whose initial value is 0.
The value of $\Lambda_{x,y}$ will be updated and will be used to add elements to the list 
$\Pi_{x,y}$.
Then we perform the following step:
\begin{equation}
   \text{ add $W$ to each $\Lambda_{u,v}$ for which $(u,v)\in \bdde h_t$}.
   \label{eq:passobirth}
\end{equation}

\subsubsection{Potential expansion operation of the $h$-process}\label{sec:ptexpand}
Suppose that at time $t+W$ an edge from $\bdde h_t \cup \mathcal{H}_B(t)$ rings.
The $h$-process will expand according to the description in Section~\ref{sec:hexp}, but we elaborate a little bit more here. 
We do not immediately observe which is the edge that rings, this edge is still random and could also correspond to a fake-clock from an edge of 
$\mathcal{H}_B^*(t)$.

Now we want to sample which clock from $\bdde h_t\cup \mathcal{H}_B(t)$ rings first. We will denote by  
$$
   (x \to y) \text{ the random edge whose clock rang}.
$$
However, we need to be a bit careful in the sampling of $(x\to y)$. First, let 
$$
   \text{$(x' \to y')$ be a uniformly random choice among the edges with clocks in $\bdde h_t \cup \mathcal{H}_B(t)$}, 
$$
where each edge 
in $\mathcal{H}_B^*(t)$ is counted with multiplicity $2$ (to account for its fake-clock and \mdla-clock) while the other edges have multiplicity $1$.
We would like to set $(x\to y)=(x'\to y')$. The problem is that, when a hole jumps over an edge $(u \to v)$ at some time $s$, where $v\not\in h_s$, 
we need to decide right away whether that hole will backtrack to $u$ later and 
this impacts which edge from $\mathcal{E}_{u\to v}$ rings first.
This decision is a function of the Bernoulli variable $\mathfrak{B}_{u\to v}$.
As explained in Section~\ref{sec:backtrack}, if $\mathfrak{B}_{u\to v}=1$, we know that the next 
clock to ring will be that of $(v\to u)$, whereas if $\mathfrak{B}_{u\to v}=0$ we have that the next clock to ring is chosen uniformly at random from the clocks of 
$\mathcal{E}_{u\to v}\setminus (v \to u)$. 

We can now state precisely how $(x\to y)$ is chosen. 
Recall that if $(x' \to y')\in \mathcal{H}_B^*(t)$ with $(x'\to y')\in \mathcal{E}_{x''\to x'}\cap \mathcal{E}_{y' \to y''}$, then we say that $(x' \to y')\in \mathcal{E}_{x'' \to x'}$ only if it was the fake-clock of 
$(x'\to y')$ that rang, and say that 
$(x' \to y')\in \mathcal{E}_{y' \to y''}$ if it was the \mdla-clock of 
$(x'\to y')$ that rang. Then, if $(x' \to y')\in \mathcal{E}_{u\to v}$ for some $(u\to v)\in\mathcal{H}(t)$ with $\mathfrak{B}_{u\to v}=1$, set $(x\to y)=(v \to u)$. 
If $(x' \to y')=(v\to u)$ for some $(u\to v)\in\mathcal{H}(t)$ with $\mathfrak{B}_{u\to v}=0$, then $(x\to y)$ is chosen uniformly at random from $\mathcal{E}_{u\to v}\setminus (v\to u)$.
In any other case, $(x\to y)=(x'\to y')$.

Now we describe the actions we need to do to compute the passage times as the $h$-process evolves according to the description in Section~\ref{sec:hexp}.
If an edge enters $\bdde h_{t+W}$ due to the ring of $(x\to y)$, we perform the steps described in Section~\ref{sec:ptbirth} for that edge.
Moreover, in any case, we
\begin{equation}
   \text{ add $W$ to each $\Lambda_{u,v}$ for which $(u,v)\in \bdde h_t$}.
   \label{eq:passo1}
\end{equation}
After the above, we do the following.
\begin{align}
   \text{if $(x,y)\in \bdde h_t$, add $\Lambda_{x,y}$ as a new element to the list $\Pi_{x,y}$,}\nonumber\\
   \text{and reset $\Lambda_{x,y}$ to 0}.
   \label{eq:Pi}
\end{align}

Note that if we always have $(x' \to y') =(x\to y)$, then the variables $\Lambda_{x,y}$ would be i.i.d.\ exponential random variables of rate $1$, since they are 
constructed exactly as described in Lemma~\ref{lem:decomp}.  
However, we will have cases when $(x'\to y' ) \neq (x\to y)$, which will still give rise to the variables $\Lambda_{x,y}$ being independent exponential random variables, but of possibly different rates. 
This will be explained in Section~\ref{sec:ptproperties}, after the passage times have been constructed.

Finally, if we have that $(x\to y) \in \bdde h_t$ and $y$ gets infected at time $t+W$, we close the list $\Pi_{x,y}$, and define $\tau_{x,y}$ as in~\eqref{eq:tau} (thereby concluding the construction of $\tau_{x,y}$). 
This means that nothing else will be added to the list 
$\Pi_{x,y}$. This can happen in the following cases:
\begin{itemize}
   \item $(x\to y) \in \bdde h_t \setminus \mathcal{H}_B(t)$, with $x$ not hosting a hole in \mdla{} or $y$ being occupied in \mdla. This was described in Section~\ref{sec:noholes}, where $y$ gets infected with 
   probability $\frac{M-1}{M}$.
   
   \item $(x\to y) \in \bdde h_t \setminus \mathcal{H}_B(t)$, with $x$ hosting a hole in \mdla, $y$ unoccupied, there exists no other edge $(\cdot \to y)\in \mathcal{H}(t)$, and $\mathfrak{B}_{x\to y}=0$. This is the case that a hole jumps out of $h_t$ (from $x$ to $y$) and does not 
   backtrack to $x$. This was described in Sections~\ref{sec:holeout} and~\ref{sec:backtrack}.
   
   \item $(x\to y) \in \mathcal{E}_{u\to v}$ for some $(u\to v)\in\mathcal{H}(t)$ with $\mathfrak{B}_{u\to v}=0$ (so $v\in h_t$ and $(x\to y)\neq (v\to u)$). 
      As mentioned in~\eqref{eq:case4}, the probability that $(x\to y)$ is equal to a given $(w\to z)\in \mathcal{E}_{u\to v}\setminus (v\to u)$ is $\frac{1}{M-1}$.
      If, in addition, we have that $(x\to y) \in \mathcal{E}_v^\rightarrow$, so $x=v$, we may infect $y$ if there is still a hole at $x=v$ and $\mathfrak{B}_{x\to y}=0$.
      (Note that this actually falls into the setting of the previous case, but we chose to highlight it here since a given edge from $\mathcal{E}_v^\rightarrow$ rings at rate $\frac{M}{M-1}$ instead of rate $1$, due to the 
      conditioning on $\mathfrak{B}_{x\to y}=0$.)
\end{itemize}

Iterating the above construction will produce the lists $\Pi_{x,y}$ for some edges $(x,y)$. 
For each edge that was not visited during this procedure, we sample an independent, exponential random variable 
of rate $\frac{M-1}{M}$ to be its passage time. 
For each edge $(u,v)$ whose construction was not completed, 
we add to $\Lambda_{u,v}$ an independent exponential random variable of rate $\frac{M-1}{M}$, add $\Lambda_{u,v}$ as a new element to $\Pi_{u,v}$, and complete the construction of $\tau_{u,v}$. 
Regardless of the value of these random variables, the evolution of the $h$-process will not change.
Also, the final passage times of those edges stochastically dominate exponential random variables of rate $\frac{M-1}{M}$.

\subsection{Properties of the passage times}\label{sec:ptproperties}
Before establishing properties of the passage times, we establishe~\eqref{eq:sh}.
\begin{lemma}\label{lem:sh}
   For any $t\geq0$,~\eqref{eq:sh} holds. 
\end{lemma}
\begin{proof}
   Clearly~\eqref{eq:sh} holds at time $0$ since $\mathcal{H}(0)=\emptyset$. Now assume that it holds during $[0,t)$, and that 
   at time $t$ a hole jumps out of $h_{t-}$, from $x$ to $y$; so $(x\to y)$ is added to $\mathcal{H}(t)$ via Case 1 (cf.\ Section~\ref{sec:holeout}) and $x\in h_t$. 
   Then, at time $t$, $x$ is not occupied by a hole or the aggregate in \mdla{} and belongs to $h_t$, while $y$ hosts a hole in \mdla{}.
   If $\mathfrak{B}_{x\to y}=0$, $y$ gets infected at time $t$ and~\eqref{eq:sh} continues to hold.
   If $\mathfrak{B}_{x\to y}=1$, $y$ remains uninfected, but $x$ remains unoccupied until at a time $s>t$ 
   the clock of an edge from $\mathcal{E}_{x}^\leftarrow$ rings, but that edge must be $(y\to x)$ since $\mathfrak{B}_{x\to y}=1$. 
   So the hole gets back to an infected vertex. 
   
   It remains to show that the edges in $\mathcal{H}(t)$ are disjoint. Assume that this is not the case; that is, there are edges $(x\to y),(u\to v)\in\mathcal{H}(t)$ with $|\{u,v\}\cap\{x,y\}|=1$.
   Assume that $(x\to y)$ and $(u\to v)$ were added to $\mathcal{H}$ at times $t_x$ and $t_u$, respectively, with $t_x>t_u$.
   If $u=x$, then at some time during $(t_u,t_x)$ a hole jumped into 
   $u=x$ in order to go to $y$ at time $t_x$. But this would cause $(u\to v)$ to be removed from $\mathcal{H}$ by Case 2, which would imply that $(u\to v)\not \in \mathcal{H}(t_x)$.
   If $v=y$, then at time $t_x$ we have $y\not\in h_{t_x}$; otherwise we would not add $(x\to y)$ to $\mathcal{H}(t_x)$. 
   In this case, since $y$ does not host a hole at time $t_x-$, Case 1 gives that $(x\to y)$ is not added to $\mathcal{H}(t_x)$ because there is already an edge $(\cdot \to y) \in \mathcal{H}(t_x-)$.
   The case $u=y$ cannot happen, because $(x\to y)\in \mathcal{E}_{u}^\leftarrow$, so Case 2 would remove $(u\to v)$ from $\mathcal{H}(t_x)$. 
   The case $v=x$ is similar, since $(x\to y)\in \mathcal{E}_{v}^\rightarrow$, so Case 2 would remove $(u\to v)$ from $\mathcal{H}(t_x)$. . 
\end{proof}

Let $Z^{(M)}$ be the following random variable. Take $Z'$ to be an exponential random variable of rate $M$, $Z''$ be an exponential random variable of rate $\frac{M-1}{M}$, and 
$Q$ be a Bernoulli random variable of parameter $\frac{M-1}{M}$, where $Z',Z''$ and $Q$ are independent of one another. Define
\begin{equation}
   Z^{(M)} = Z' + \ind{Q=1}Z''.
   \label{eq:zk}
\end{equation}
\begin{lemma}\label{lem:zk}
   For any $M> 0$, $Z^{(M)}$ stochastically dominates (strictly) an exponential random variable of rate $1$. 
   Moreover, $Z^{(M)}$ is stochastically dominated by an exponential random variable of rate $\frac{M-1}{M}$.
\end{lemma}
\begin{proof}
   For the first part, note that if we take~\eqref{eq:zk} and replace $Z''$ with $\hat Z$, where $\hat Z$ is an exponential random variable of rate $1$, then Lemma~\ref{lem:decomp} with $k=M$, $W=Z'$ and $X_1=\hat Z$ 
   (the values of $X_2,X_3,\ldots,X_M$ being irrelevant) gives that 
   $Z' + \ind{Q=1}\hat Z$ is an exponential random variable of rate $1$. Since $Z''$ stochastically dominates $\hat Z$, we obtain the first part of the lemma.
   
   For the second statement, we know from Lemma~\ref{lem:decomp} and the first part that 
   $Z' + \ind{Q=1}\hat Z$
   is an exponential random variable of rate $1$. Then, using Lemma~\ref{lem:minscale}, we have that 
   $\frac{M}{M-1}Z' + \ind{Q=1}\frac{M}{M-1}\hat Z$ is an exponential random variable of rate $\frac{M-1}{M}$. But that variable has the same distribution as 
   $\frac{M}{M-1}Z' + \ind{Q=1}Z''\geq Z^{(M)}$.
\end{proof}

Now we show that the passage times $\tau_{x,y}$ stochastically dominate i.i.d.\ random variables distributed as $Z^{(M)}$.
\begin{lemma}\label{lem:hp}
   The collection of passage times $\tau_{x,z}$, for each edge $(x,y)$, stochastically dominates an i.i.d.\ collection of random variables distributed as $Z^{(M)}$.
\end{lemma}
\begin{proof}
   If an edge $(w,z)$ is such that its passage time was not completed during the procedure above, then we know that its passage time stochastically dominates an independent, exponential random variable of 
   rate $\frac{M-1}{M}$, which in turn stochastically dominates $Z^{(M)}$ by the second part of Lemma~\ref{lem:zk}.
   
   So now we consider a given edge $(w,z)$, whose passage time was completely constructed during the procedure described in Sections~\ref{sec:hexp} and ~\ref{sec:coupling}. 
   Let $t_w<t_z$ be the times such that $w$ is infected at time $t_w$ and $z$ is infected at time $t_z$. The passage time of $(w,z)$ is then completed at time $t_z$ and is $t_z-t_w$.
   
   Now we split the proof into two parts. 
   In the first part, assume that $w$ does not host a hole at time $t_w$ (for example, because it was infected according to Case 3, Section~\ref{sec:noholes}). 
   The crucial property we will use in this case is that $(w\to z)$ cannot belong to $\mathcal{H}_B$ during $[t_w,t_z)$ 
   since an edge $(z\to \cdot)$ cannot be in $\mathcal{H}$ as $z$ is not infected and an edge 
   $(\cdot\to w)$ cannot be in $\mathcal{H}$ since $w$ was infected without a hole. 
   As the $h$-process evolves from $t_w$, at each step, we add $W$ to $\Lambda_{w,z}$, until we find a step where the clock that rings is the one of $(w\to z)$. 
   This is the procedure described in Lemma~\ref{lem:decomp} for the construction
   of independent exponential random variables of rate $1$. Hence, at the time the clock of $(w \to z)$ rings, call this time $s$, 
   we have that $\Lambda_{w,z}$ is an exponential random variable of rate 1, which is then added to $\Pi_{w,z}$.
   If at time $s$ we fall into the setting of Case $3$ (Section~\ref{sec:noholes}), then we only infect $z$ with probability
   $\frac{M-1}{M}$; otherwise we wait for the next time the clock of $(w\to z)$ rings, adding another exponential random variable of rate $1$ to the list $\Pi_{w,z}$, and iterating this procedure. 
   If at time $s$ we fall into the setting of Case $1$ (Section~\ref{sec:holeout}), we only 
   infect $z$ if $\mathfrak{B}_{w \to z}=0$, which occurs with probability $\frac{M-1}{M}$; otherwise we iterate this procedure since the hole will jump back to $w$ (or to the infected set before $(z\to w)$ rings).
   Therefore, each element of the list $\Pi_{w,z}$ is an independent random variable of rate $1$, and the number of elements is given by a geometric random variable of success probability $\frac{M-1}{M}$. 
   Lemma~\ref{lem:rsum} gives that $\tau_{w,z}$ is in this case an exponential random variable of rate $\frac{M-1}{M}$.
   Since this stochastically dominates $Z^{(M)}$ by Lemma~\ref{lem:zk}, this first part is completed.
   
   Now, for the second part, assume that $w$ hosts a hole at time $t_w$.
   Assume that that hole jumped to $w$ from a vertex $w'$.
   The first situation to imagine is that the hole jumped from $w'$ to $w$ at time $t_w$, which causes 
   $(w' \to w)$ to be added to $\mathcal{H}(t_w)$; thus $\mathfrak{B}_{w'\to w}=0$. 
   But, it could also be the case that $\mathfrak{B}_{w'\to w}=1$. For this to happen, the hole must have jumped from $w'$ to $w$ at a time $t_w'<t_w$, 
   which caused $(w' \to w)$ to be added to $\mathcal{H}(t_w')$ and $w$ not to be added to $h_{t_{w'}}$.
   Then, in a time $t_w$, the clock of an edge $(w'' \to w)$ (which is not part of $\mathcal{E}_{w'\to w}$) rings at time $t_w$ when $w$ is still occupied by a hole, triggering Case 3, which decides to infect $w$.  
   Regardless of which of the two situations above occurs, we know that 
   $(w \to z)\in \mathcal{E}_{w'\to w}$. 
   
   If $\mathfrak{B}_{w'\to w}=1$, then we know the hole will do a backtracking jump from $w$ to $w'$ before the clock of $(w\to z)$ rings, which will cause $(w'\to w)$ to be removed from $\mathcal{H}$. 
   At this point, $w$ will not have a hole anymore and we may proceed as in the first part of the proof, which implies that the passage time $\tau_{w,z}$ stochastically dominates an exponential 
   random variable of rate $\frac{M-1}{M}$.
   
   The most delicate case is when $\mathfrak{B}_{w'\to w}=0$. 
   Suppose that at time $s$ a clock from $\mathcal{E}_{w'\to w}$ rings. 
   Note that, since $|\mathcal{E}_{w'\to w}|=M$, we will have that $\Lambda_{w,z}=s-t_w$ is distributed as an 
   exponential random variable of rate $M$. Let $(x\to y)$ be the edge whose clock rang at time $s$.
   Then a few cases may happen.
   
   Case A: $(x \to y)=(w\to z)$. 
   This happens with probability $\frac{1}{M-1}$. 
   When this is the case, we will have to do the steps of Case 2 plus Case 1, which causes $(w\to z)$ to be added to $\mathcal{H}(s)$. Two subcases can then happen.
   
   Case A.1. With probability $\frac{M-1}{M}$ we have $\mathfrak{B}_{w\to z}=0$, so the hole at $z$ will not do a backtracking jump to $w$; hence we infect $z$ at time $s$. Therefore, with overall probability 
   $\frac{1}{M-1} \times \frac{M-1}{M}= \frac{1}{M}$ we obtain that the passage time of $(w,z)$ is completed at time $s$, which implies that $\tau_{w,z}$ is an exponential random variable of rate $M$. 
   
   Case A.2. With probability $\frac{1}{M}$ we have $\mathfrak{B}_{w\to z}=1$, so the hole will do a backtracking jump to $w$ and we will not infect $z$. 
   From this time onwards, the passage time of $(w,z)$ is computed exactly as 
   in the first part of the lemma. Therefore, from $s$, we will wait a time that is distributed according to an exponential random variable of rate $\frac{M-1}{M}$ to complete the passage time of 
   $(w,z)$. This gives that $\tau_{w,z}$ is the sum of an exponential random variable of rate $M$ plus an exponential random variable of rate $\frac{M-1}{M}$.
   
   Case B. $(x\to y)\neq (w\to z)$. This happens with probability $\frac{M-2}{M-1}$ and concludes the processing of the backtracking jump of $(w' \to w)$.
   From this time onwards, the passage time of $(w,z)$ is computed exactly as 
   in the first part of the proof. 
   This gives that $\tau_{w,z}$ is the sum of an exponential random variable of rate $M$ plus an exponential random variable of rate $\frac{M-1}{M}$.
   
   This concludes all cases in this second part. Then, the probability that $\tau_{w,z}$ is given by the sum of an exponential random variable of rate $M$ and an exponential random variable of rate $\frac{M-1}{M}$ is 
   the probability that case $A.1$ does not happen, which is $1-\frac{1}{M}$. Therefore, $\tau_{w,z}$ is distributed as $Z^{(M)}$ from~\eqref{eq:zk}.

   The independence of $\tau$ across different edges follows from the fact that elements are added to each list $\Pi_{x,y}$ only when the edge $(x,y)$ rings, and edges ring independently of one another.
\end{proof}
\subsection{Concluding the proof of Theorem~\ref{thm:mdla}}\label{sec:endproof}
The proof will rely on a result by van den Berg and Kesten~\cite{BK} about strict inequalities for first passage percolation. 
We will state here a version that is adapted to our needs. In the one hand, it is a special case of the main result in~\cite{BK}, as we explain in Remark~\ref{rmk:bk} below. 
On the other hand, the result we need does not follow from the theorems in~\cite{BK}, but follows from the proof there, without changing a single word.
We will not repeat the full proof of~\cite{BK} here, but will give a description of the main steps. 

Recall that $\mathcal{B}_\Q=\ballQ{1}$ denotes the ball of radius 1 according to the norm given by a first passage percolation with passage times given by i.i.d.\ random variables of distribution $\Q$.
Recall also that we drop the subscript $\Q$ when $\Q$ is the exponential distribution of rate $1$.
\begin{proposition}\label{pro:bk}
   If $\Q$ stochastically dominates and is not equal to the exponential distribution of rate 1, then there exists $\epsilon>0$ such that 
   $$
      \mathcal{B}_\Q \subset \mathcal{B}(1-\epsilon).
   $$
\end{proposition}
\begin{proof}
   Let $\FPP_1$ stands for a first passage percolation process with i.i.d.\ exponential passage times of rate $1$, and let $\FPP_\Q$ stands for a first passage percolation
   with i.i.d.\ passage times of distribution $\Q$.
   This proposition is implicitly proven in~\cite{BK}, but the theorem in~\cite{BK} states the above result only in the axial direction.
   To do this, let $x_n=(n,0,0,\ldots,0)\in\mathbb{Z}^d$, and for any $x\in \mathbb{Z}^d$, define $T(x)$ and $T_\Q(x)$ to be the time that $\FPP_1$ and $\FPP_\Q$, respectively, take to occupy $x$.
   By Kingman's subadditive ergodic theorem~\cite{Kingman} 
   we have that the following limits exists almost surely:
   \begin{equation}
      \nu = \lim_{n\to \infty} \frac{T(x_n)}{n}
      \quad \text{ and }\quad
      \nu_\Q= \lim_{n\to \infty} \frac{T_\Q(x_n)}{n}.
      \label{eq:nu}
   \end{equation}
   By monotonicity and stochastic domination, we obtain that $\nu\leq \nu_\Q$, and the main result of~\cite{BK} is to establish the strict inequality $\nu< \nu_\Q$. 
   
   The proof in~\cite{BK} goes via a renormalization argument. 
   First define a fixed, but large value $\ell$ and partition $\mathbb{Z}^d$ into cubes of side-length $\ell$. 
   Then they say that a cube $R$ is \emph{good} if a certain ``good'' event happens. The good event is such that for any given path $P$
   of $\FPP_\Q$ inside $P$, there is a positive probability (uniformly over $P$ and $R$) such that 
   can find an alternative path $P'$ which differs very little from $P$ 
   and such that the time $\FPP_1$ takes to traverse $P'$ is at most the time that $\FPP_\Q$ takes to traverse $P$ minus a fixed value $\delta>0$. 
   Then a percolation argument (which is by now quite standard) gives that the set of 
   good cubes percolates on $\mathbb{Z}^d$. 
   This means that any long enough path on $\mathbb{Z}^d$, say of size $n$, must pass through a number of good cubes of order $n$. (Here $\ell$ and $\delta$ are fixed, while $n$ 
   can grow.) Then, we can consider
   the geodesic path $P$ that $\FPP_\Q$ takes to go from $0$ to $x_n$, and using the above reasoning we obtain an order of $n$ good cubes $R$ such that $P$ has a long piece inside $R$. 
   Then, for each good cube, with positive probability we can replace the 
   long pieces of $P$ within the good cube with the alternative path provided by the definition of good cubes, 
   which produces a path from $0$ to $x_n$ whose passage time in $\FPP_1$ is faster than that of $\FPP_\Q$ by a factor of order $n$.
   This implies, for example, that one obtains a value $\delta'>0$, depending only on $\Q$, for which $\nu \leq (1-\delta) \nu_\Q$. 
   This is the argument in~\cite{BK}.
   
   An important feature of the proof in~\cite{BK} is that it does not depend on the direction; this fact was already observed in~\cite{Marchand}. 
   In other words, instead of only considering the sequence of vertices $(x_n)_n$ as defined above, 
   we can consider any rational value $q\in\mathbb{Q}^d$ and a sequence 
   $x_n = q\, n$. 
   Then, for each $x\in\mathbb{R}^d$, associate $x$ to the integer point $y\in\mathbb{Z}^d$ such that $x\in y+[-1/2,1/2)^d$, and 
   generalize $T(x)$ and $T_\Q(x)$ to be the time that $\FPP_1$ and $\FPP_\Q$, respectively, take to occupy the integer point that is associated to $x$. 
   Then we can define $\nu(q)$ and $\nu_\Q(q)$ as in~\eqref{eq:nu}: 
   $$
      \nu(q) = \lim_{n\to \infty} \frac{T(x_n)}{n}
      \quad \text{ and }\quad
      \nu_\Q(q) = \lim_{n\to \infty} \frac{T_\Q(x_n)}{n}.
   $$
   The very same proof in~\cite{BK} gives that one can find $\delta''>0$ depending only on $\Q$ such that, 
   \emph{uniformly} over all $q\in \mathbb{Q}^d$, one has 
   $\nu(q) \leq (1-\delta'')\nu_\Q(q)$. 
   
   Then one can obtain two continuous functions $\bar\nu,\bar\nu_\Q$ from $\mathbb{R}^d$ to $\mathbb{R}_+$ by taking the unique continuous extension of $\nu,\nu_\Q$. 
   Since $\delta''>0$ uniformly on the choice of $q$, we have the existence of $\epsilon>0$ for which $\mathcal{B}_\Q \subset \mathcal{B}(1-\epsilon)$.
\end{proof}

\begin{remark}~\label{rmk:bk}
   The result in~\cite{BK} is in some sense more general than stated above since instead of requiring stochastic domination, it just requires that $\Q$ 
   is \emph{less variable} than an exponential random variable of rate $1$; but we will not require such level of generality. 
\end{remark}

\begin{proof}[Proof of Theorem~\ref{thm:mdla}]
   Using Lemma~\ref{lem:hp}, we have that the $h$-process grows slower than if the red seeds grows clusters of first passage percolation with passage times distributed as $Z^{(M)}$. 
   Let $\Q$ be the distribution of $Z^{(M)}$. Then, 
   by Proposition~\ref{pro:bk}, we have the existence of $\lambda<1$ such that $\mathcal{B}_\Q \subseteq \mathcal{B}(\lambda)$.
   Then, performing the whole multi-scale procedure described in Section~\ref{sec:proof} and using the encapsulation procedure of Section~\ref{sec:hp}, we obtain that with positive probability 
   the set of sites that are not infected by the $h$-process and that are occupied by the aggregate grows indefinitely and contains a ball (up to regions separated from infinity by this set of sites).
   
   Note that, in the $h$-process, whole clusters of seeds are added when the aggregate tries to occupy a seed, while in \fpphe{} seeds are activated one by one. 
   This is not a problem. In fact, the same proof works if in \fpphe{} the activation of a seed implies the activation of its whole cluster of seeds. 
   The reason is that, in the encapsulation procedure of Proposition~\ref{pro:encapsulate2}, we already assumed that $\xi^2(0)$ starts from time $0$ from any subset of a ball $\ball{r}$ of radius $r$. 
\end{proof}

\appendix
\section{Proof of Propositions~\ref{pro:encapsulate} and~\ref{pro:encapsulate2}}\label{sec:HPproof}
We first establish Proposition~\ref{pro:encapsulate} and at the end, in Section~\ref{sec:encapsulate2}, we discuss how the proof can be changed to establish 
Proposition~\ref{pro:encapsulate2}.
We start describing the overall strategy of the proof, and setting up the notation.
The main intuition behind the proof is that since $\xi^2(0)$ is initially inside $\ball{r}$ and $\xi^1(0)$ is 
outside $\ball{\alpha r}$, with $\alpha>1$ being large enough, there is enough space for $\xi^1$ to start growing before 
noticing the presence of $\xi^2$. This gives enough time for $\xi^1$ and $\xi^2$ to get closer to the set predicted by the shape theorem.
Then we can guarantee that $\xi^1$ can encapsulate $\xi^2$ by letting $\xi^1$ occupy a sequence of growing annulus sectors centered at the origin.
This is illustrated in Figure~\ref{fig:encapsulate}.
\begin{figure}[htbp]
   \begin{center}
      \includegraphics[scale=.8]{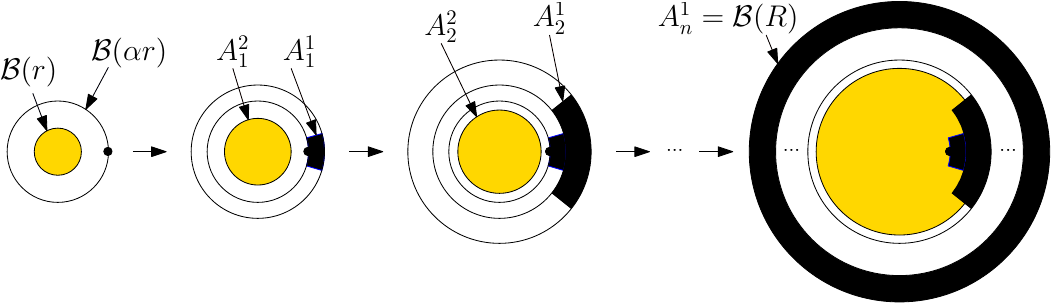}
   \end{center}
   \caption{A schematic view of the encapsulation procedure. The left picture shows the ball $\ball{r}$ that contains $\xi^2(0)$, and we assume that 
   $\xi^1(0)$ is at the boundary of $\ball{\alpha r}$. The proof goes by establishing that $\xi^1$ occupies the regions illustrated in black, while 
   $\xi^2$ is contained in the yellow regions. The growing sequence of annulus sectors that are occupied by $\xi^1$ grows until it reaches a full 
   annulus, as illustrated in the rightmost picture. At this moment, $\xi^2$ has been encapsulated and is confined inside a ball.} 
   \label{fig:encapsulate}
\end{figure}

We now turn to the details of the construction of the annulus sectors. Set 
$$
   \delta=\frac{1-\lambda}{10}.
$$ 
Note that $\delta \leq \frac{1}{\lambda^{1/10}}-1 \land \frac{1}{10}$.
Define
$$
   C_0=\{x/|x|\}
   \quad\text{and}\quad
   C_n = \lrc{y\in\mathbb{R}^d \colon |y|=1 \text{ and } \inf_{z\in C_{n-1}}|y-z|\leq \delta^2/2} \quad\text{for all $n\geq1$}.
$$
The value of $C_n$ is related to the angle of the annulus sector at step $n$, which starts from the angle related to position $x$ and increases until $C_n$ 
is the full unit circle, according to the norm $|\cdot|$. Let $N$ be the step where we obtain the unit circle; i.e., $N$ is the smallest integer so that 
$C_N=C_{N+1}$.   
Note that 
$$
   d_H(C_n,C_{n-1})\leq \delta^2/2,
$$ 
where $d_H$ stands for the Hausdorff distance.
Let 
$$
   A_0^1=\{x\}
   \quad\text{and}\quad
   A_n^1 = \lrc{y\in\mathbb{R}^d \colon (1+\delta)^{n-1}\alpha r< |y|\leq (1+\delta)^{n}\alpha r \text{ and } \tfrac{y}{|y|}\in C_n }
   \quad\text{for all $n\geq 1$}.
$$
The goal is to show that, for each $n=1,2,\ldots,N$, $\xi^1$ completely occupies $A_n^1$ after step $n$.
Hence $\xi^1$ will encapsulate $\xi^2$ when it occupies $A_N^1$.

As in~\eqref{eq:cfpp}, we let $C_\FPP$ be a constant such that $\ball{r}\supset [-C_\FPP r, C_\FPP r]^d$ for all $r>0$, which gives that 
$\ball{\frac{3}{2C_\FPP}}\supset [-3/2,3/2]^d$.
Since for any point $w\in \mathbb{R}^d$ we have that $w+[-1/2,+1/2]^d$ contains a vertex $\bar w \in \mathbb{Z}^d$, and at least one vertex in $\bar w + [-1,1]^d$ must have 
norm $|\cdot|$ smaller than that of $\bar w$, we obtain that 
\begin{equation}
   \text{for all $w\in\mathbb{R}^d$, there exists $w'\in\mathbb{Z}^d \cap \left(w+\ball{\tfrac{3}{2C_\FPP}}\right)$ such that $|w'|\leq |w|$}.
   \label{eq:norm}
\end{equation}

In order to show that $\xi^1$ occupies $A_n^1$ for all $n$, we need to bound the distance between $A_n^1$ and $A_{n-1}^1$. 
Given $y\in A_{n}^1$, let $\hat y$ be the closest vertex of $\mathbb{Z}^d$ to the point $\frac{y}{|y|}(1+\delta)^{n-1}\alpha r$.
Using the triangle inequality we have 
\begin{align}
   \sup_{y\in A_n^1}\inf_{z\in A_{n-1}^1} |y-z|
   &\leq \sup_{y\in A_n^1}\left(|y-\hat y|+\inf_{z\in A_{n-1}^1} |z-\hat y|\right).\nonumber
\end{align}
Since the ball in $\mathbb{R}^d$ centered at the point $\frac{y}{|y|}(1+\delta)^{n-1}\alpha r$ and of radius $(1+\delta)^{n-1}\alpha r\,d_H(C_{n-1},C_n)$ must contain a point $w\in\mathbb{R}^d$ with $\frac{w}{|w|}\in C_{n-1}$
and $|w|=(1+\delta)^{n-1}\alpha r$, we obtain that 
$$
   \inf_{z\in A_{n-1}^1} |z-\hat y| \leq (1+\delta)^{n-1}\alpha r \,d_H(C_{n-1},C_n) + \frac{3}{C_\FPP}.
$$
Thus, using that $|y-\hat y|\leq \left((1+\delta)^{n}-(1+\delta)^{n-1}\right)\alpha r+\frac{3}{2 C_\FPP}$, we obtain
\begin{align}
   \sup_{y\in A_n^1}\inf_{z\in A_{n-1}^1} |y-z|
   &\leq (1+\delta)^{n-1}\alpha r \frac{\delta^2}{2} + \left((1+\delta)^{n}-(1+\delta)^{n-1}\right)\alpha r+ \frac{9}{2C_\FPP}\nonumber\\
   &\leq (1+\delta)^{n-1}(1+\delta/2)\delta \alpha r+ \frac{9}{2C_\FPP}\nonumber\\
   &\leq (1+\delta)^{n}\delta \alpha r,
   \label{eq:distan}
\end{align}
where the last step holds by choosing $c_1$ large enough in the condition of $\alpha$.
This leads us to define
$$
   t_n = (1+\delta)^{n+1}\delta\alpha r
   \quad\text{and}\quad
   T_n = \sum_{i=1}^n t_n
   = \lr{1-(1+\delta)^{-n}}(1+\delta)^{n+2}\alpha r.
$$
The value $t_n$ represents the time we will wait so that $\xi^1$ grows from $A^1_{n-1}$ until it contains $A^1_{n}$.
For all $n\geq 1$, define also the sets 
$$
   A_n^2 = \ball{r+\lambda \lr{1-(1+\delta)^{-n}}(1+\delta)^{n+3}\alpha r}
$$
and
$$
   \hat A_n^1 = U_n \cup \bddo U_n,
$$
where
$$
   U_n = \lrc{x\in\mathbb{Z}^d \colon \exists y\in A_{n-1}^1 \text{ and } z\sim x\text{ for which } |z-y| \leq (1+\delta)^{n+3}\delta\alpha r} \supset A_{n-1}^1 \cup A_{n}^1
$$
Note that $A_n^2 = \ball{r+\lambda (1+\delta) T_n}$ and the distance between $A_{n-1}^1$ and $\bddi \hat A_n^1$ is at least $(1+\delta)t_n$; we have chosen to define $A_n^2$ and $\hat A_n^1$ independently of $t_n$ because later 
we will apply Proposition~\ref{pro:encapsulate} with a time scaling, which will only cause a change in the definition of $t_n$ in this proof.

The idea is that after occupying $A_{n-1}^1$, $\xi^1$ will occupy $A_n^1$ after a time interval of length $t_n$, and this is achieved 
only using passage times inside $\hat A_{n}^1$. At the same time, for each $n$, after time $T_n$, $\xi^2$ will be contained 
inside $A_n^2$. The crucial part of the construction is that $\hat A_n^1 \cap A_n^2=\emptyset$. This means that the passage times inside 
$\hat A_n^1$ that we use to guarantee that $\xi^1$ grows from $A^1_{n-1}$ to $A^1_{n}$ do not intersect $\xi^2$. 

\subsection{The spread of $\xi^2$}
In this part we show that the growth of $\xi^2$ is not too fast, so that $\xi^2$ is contained 
inside $A_n^2$. Recall that, by the conditions in Proposition~\ref{pro:encapsulate}, $\alpha$ is assumed to be large enough; in 
particular, there exists a large $c_1$ such that $\alpha > \left(\frac{1}{\lambda(1-\lambda)}\right)^{c_1}$.
\begin{lemma}\label{lem:eta2}
   Assume that $\alpha$ is large enough, as described above. 
   Then there exists a constant $c>0$ such that, for any $n\geq 1$, we have 
   $$
      \PQ_{x,\ball{r}}^\Q\lr{\xi^2(T_n) \not \subseteq A_n^2}
      \leq \exp\lr{-c (\lambda T_n)^{\frac{d+1}{2d+4}}}.
   $$
\end{lemma}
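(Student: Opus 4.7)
The plan is to use time scaling to convert $\xi^2$, which spreads at rate $\lambda$, into a standard rate-$1$ first passage percolation, and then apply Kesten's concentration bound (Proposition~\ref{pro:kesten}) via a union bound over the initial configuration $\ball{r}$. The cushion factor $(1+\delta)$ in the definition of $A_n^2$ is precisely what makes Proposition~\ref{pro:kesten} applicable.

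First I would observe that since $\zeta^2$ consists of i.i.d.\ $\Exp(\lambda)$ passage times, by time scaling the random set $\xi^2(T_n)$ has the same law as a rate-$1$ first passage percolation started from $\ball{r}$ and run until time $\lambda T_n$. Denoting this rate-$1$ process by $\tilde\xi$, we have $\tilde\xi(\lambda T_n)=\bigcup_{y\in \ball{r}\cap\mathbb{Z}^d}\xi_y(\lambda T_n)$, where $\xi_y$ is the rate-$1$ FPP started from $\{y\}$. If each $\xi_y(\lambda T_n)\subseteq y+\ball{(1+\delta)\lambda T_n}$, then by the triangle inequality for $|\cdot|$ we obtain $\tilde\xi(\lambda T_n)\subseteq \ball{r+(1+\delta)\lambda T_n}=A_n^2$. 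Hence it is enough to bound the probability that some $y\in \ball{r}$ violates this inclusion, which (up to translation) is exactly the second event comprising $S_{\lambda T_n}^\delta(\zeta)$ in Proposition~\ref{pro:kesten}.

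Taking a union bound over $y\in \ball{r}\cap\mathbb{Z}^d$, whose cardinality is polynomial in $r$ (hence polynomial in $\lambda T_n$ since $\lambda T_n\geq \lambda(1+\delta)^2\delta\alpha r$), Proposition~\ref{pro:kesten} gives an upper bound of order $(\lambda T_n)^{O(d)}\exp\lr{-c_3(\lambda T_n)^{(d+1)/(2d+4)}}$, provided the parameter $\delta=(1-\lambda)/10$ satisfies the technical constraint $\delta > c_1(\lambda T_n)^{-1/(2d+4)}(\log \lambda T_n)^{1/(d+2)}$. The polynomial prefactor is then absorbed into the stretched-exponential tail (at the cost of adjusting the constant), yielding the stated bound $\exp\lr{-c(\lambda T_n)^{(d+1)/(2d+4)}}$.

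The only thing to check carefully is that $\delta=(1-\lambda)/10$ indeed meets the lower bound required by Proposition~\ref{pro:kesten}. This is where the assumption $\alpha>(1/(\lambda(1-\lambda)))^{c_1}$ together with $r>1$ enters: since $\lambda T_n\gtrsim \lambda(1-\lambda)\alpha r$, the quantity $\lambda T_n$ is large enough (as a function of $1-\lambda$) so that $c_1(\lambda T_n)^{-1/(2d+4)}(\log\lambda T_n)^{1/(d+2)}<(1-\lambda)/10$. I expect this verification to be the only non-immediate step; everything else is a direct rescaling and union bound.
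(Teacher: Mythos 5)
Your proof is correct and follows essentially the same argument as the paper: time scaling to reduce to rate-$1$ FPP run for time $\lambda T_n$, a union bound over the $O(r^d)$ vertices of $\ball{r}$, and Kesten's concentration bound (Proposition~\ref{pro:kesten}) with the cushion $\delta=(1-\lambda)/10$, whose admissibility is ensured by $\alpha$ being large. One small slip in your description: the relevant piece of $S_{\lambda T_n}^\delta$ is the first event (escape beyond $\ball{(1+\delta)\lambda T_n}$), not the second, but since Proposition~\ref{pro:kesten} bounds the union this does not affect the argument.
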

\begin{proof}
   Let $\lambda \Q$ denote the distribution of $\lambda\zeta^2_{x,y}$.
   By time scaling, we have that 
   \begin{align*}
      \PQ_{x,\ball{r}}^\Q\lr{\xi^2(T_n) \not \subseteq A_n^2}
      \leq \PQ^{\lambda\Q}\lr{D(\ball{r},\bddo A_n^2; \zeta) \leq \lambda T_n}
      \leq \sum_{x\in \ball{r}}\PQ^{\lambda\Q}\lr{D(x,\bddo A_n^2; \zeta) \leq \lambda T_n}.
   \end{align*}
   By choosing $c_1$ large enough in the condition of $\alpha$, we can guarantee that $\delta\geq (\lambda T_1)^{-\frac{1}{2d+4}} \geq (\lambda T_n)^{-\frac{1}{2d+4}}$, which allows us to apply 
   Proposition~\ref{pro:kesten}. Then using that 
   \begin{equation}
      \inf_{x\in \ball{r},y\in \bddo A_n^2} |x-y| \geq (1+\delta)\lambda T_n,
      \label{eq:appdist2}
   \end{equation}
   we obtain 
   \begin{equation}
      \PQ^{\lambda\Q}\lr{D(x,\bddo A_n^2; \zeta) \leq \lambda T_n}
      \leq \PQ^{\lambda\Q}\lr{S_{\lambda T_n}^\delta(\zeta)}
      \leq c'\exp\lr{-c'' (\lambda T_n)^{\frac{d+1}{2d+4}}},
      \label{eq:k1}
   \end{equation}
   for positive constants $c',c''$.
   Since the number of vertices in $\ball{r}$ is of order $r^d$, and $T_n\geq T_1$ is large enough by the condition in $\alpha$, the lemma follows.
\end{proof}

\subsection{The spread of $\xi^1$}
Here we show that the growth of $\xi^1$ is fast enough, so that $\xi^1$ occupies $A_n^1$ at time $T_n$.
\begin{lemma}\label{lem:eta1}
   Assume that $\alpha$ is large enough, as in the statement of Proposition~\ref{pro:encapsulate}.
   There exists a positive constant $c$ such that, for any $n\geq 1$, we have 
   $$
      \PQ_{x,\ball{r}}\lr{\sup_{x\in A_n^1} D(A_{n-1}^1,x;\zeta^1) > t_n}
      \leq \exp\lr{-c t_n^\frac{d+1}{2d+4}}.
   $$
\end{lemma}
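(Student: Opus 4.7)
The plan is to reduce the supremum over endpoints in $A_n^1$ to a pointwise first passage percolation lower bound, and then apply Proposition~\ref{pro:kesten}. The crucial ingredient is the distance estimate~\eqref{eq:distan} preceding the lemma: every point of $A_n^1$ lies within distance $(1+\delta)^n\delta\alpha r = t_n/(1+\delta)$ of $A_{n-1}^1$, and $t_n/(1+\delta)$ is strictly less than the time budget $t_n$. This slack is exactly what the shape-theorem lower bound needs.

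Concretely, for each lattice point $y \in A_n^1$ I would pick a nearest point $z(y) \in A_{n-1}^1 \cap \mathbb{Z}^d$ with $|y-z(y)| \leq t_n/(1+\delta)$ (which exists by~\eqref{eq:distan}, up to a negligible $O(1)$ lattice rounding). Then
$$
\lrc{D(A_{n-1}^1, y; \zeta^1) > t_n} \subseteq \lrc{D(z(y), y; \zeta^1) > t_n}.
$$
Setting $\delta' = \delta/(2(1+\delta))$ ensures $t_n/(1+\delta) \leq (1-\delta')t_n$. By translation invariance the right-hand event has the same probability as the event that a rate-$1$ FPP started at the origin fails to reach the point $y-z(y) \in \ball{(1-\delta')t_n}$ by time $t_n$, and this event is contained in the ``too slow'' half of $S_{t_n}^{\delta'}(\zeta^1)$. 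A union bound over $y \in A_n^1$ then gives
$$
\PQ_{x, \ball{r}}\lr{\sup_{y \in A_n^1} D(A_{n-1}^1, y; \zeta^1) > t_n} \leq |A_n^1| \cdot \PQ\lr{S_{t_n}^{\delta'}(\zeta^1)}.
$$

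It remains to verify the hypothesis $\delta' > c_1 t_n^{-1/(2d+4)}(\log t_n)^{1/(d+2)}$ of Proposition~\ref{pro:kesten}. Since $\delta = (1-\lambda)/10$ is fixed once $\lambda$ is, this reduces to asking that $t_n$, and hence $t_1 = (1+\delta)^2 \delta \alpha r$, be sufficiently large in terms of $\delta$; this is exactly what the hypothesis $\alpha > (\lambda(1-\lambda))^{-c_1}$ of Proposition~\ref{pro:encapsulate} furnishes, provided $c_1$ is chosen large enough. Proposition~\ref{pro:kesten} then bounds $\PQ(S_{t_n}^{\delta'}(\zeta^1))$ by $c_2 t_n^{2d}\exp(-c_3 t_n^{(d+1)/(2d+4)})$, and since $|A_n^1|$ is polynomial in $t_n$, the polynomial prefactors are absorbed into the stretched exponential, yielding the claimed bound. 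The main piece of bookkeeping is tracking how the constants depend on $\lambda$ and $\delta$ so that the single hypothesis $\alpha > (\lambda(1-\lambda))^{-c_1}$ suffices uniformly in $n \geq 1$; everything else is a direct application of Kesten's fluctuation estimate.
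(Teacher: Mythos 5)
Your proof matches the paper's argument: both reduce the supremum to a union bound over $y\in A_n^1$, use the distance estimate~\eqref{eq:distan} to get a point of $A_{n-1}^1$ within distance $t_n/(1+\delta)$ of each $y$, and apply Kesten's fluctuation bound (Proposition~\ref{pro:kesten}) to an event of the form $S_{t_n}^{\delta'}$, absorbing the polynomial prefactor $|A_n^1|$ into the stretched exponential. The paper takes $\delta'=\delta/(1+\delta)$ rather than your slightly more conservative $\delta/(2(1+\delta))$, and leaves the verification of the $\delta'\gtrsim t_n^{-1/(2d+4)}(\log t_n)^{1/(d+2)}$ hypothesis implicit, but otherwise the argument is the same.
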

\begin{proof}
   We start writing
   $$
      \PQ_{x,\ball{r}}\lr{\sup_{x\in A_n^1} D(A_{n-1}^1,x;\zeta^1) > t_n}
      \leq \sum_{x\in A_n^1}  \PQ\lr{D(A_{n-1}^1,x;\zeta) > t_n}.
   $$
   From~\eqref{eq:distan} we have that
   \begin{equation}
      \sup_{x\in A_n^1}\inf_{y\in A_{n-1}^1} |x-y| \leq \frac{t_n}{1+\delta}
      =t_n\lr{1 - \frac{\delta}{1+\delta}}.
      \label{eq:appdist1}
   \end{equation}
   Applying Proposition~\ref{pro:kesten} and using that the number of vertices in $A_n^1$ is bounded above by $c' (1+\delta)^{n-1}\delta\alpha r$, we have 
   \begin{align*}
      \PQ_{x,\ball{r}}\lr{\sup_{x\in A_n^1} D(A_{n-1}^1,x;\zeta^1) > t_n}
      \leq \sum_{x\in A_n^1}  \PQ\lr{S_{t_n}^\frac{\delta}{1+\delta}}
      \leq c'' (1+\delta)^{n-1}\delta\alpha r t_n^d\exp\lr{-c''' t_n^\frac{d+1}{2d+4}}.
   \end{align*}
   The lemma then follows since $\alpha> \left(\frac{1}{\lambda(1-\lambda)}\right)^{c_1}$ for some large enough $c_1$.
\end{proof}

The lemma below gives that it is unlikely that $\xi^1$ leaves the set $\hat A_n^1$ in the $n$th step. 
It may sound a bit counterintuitive that we need to guarantee that 
$\xi^1$ is not too fast, but this lemma is required to ensure that $\xi^1$ occupies $A_n^1$ regardless of the passage times outside 
$\hat A_n^1$.
\begin{lemma}\label{lem:eta1b}
   Assume that $\alpha$ is large enough, as in the statement of Proposition~\ref{pro:encapsulate}.
   There is a positive constant $c$ such that, for any $n\geq 1$, we have 
   $$
      \PQ_{x,\ball{r}}\lr{D(A_{n-1}^1,\bddi \hat A_{n}^1;\zeta^1) \leq t_n}
      \leq \exp\lr{-c t_n^{\frac{d+1}{2d+4}}}.
   $$
\end{lemma}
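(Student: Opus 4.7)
\textbf{Proof proposal for Lemma~\ref{lem:eta1b}.}

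The key geometric observation is that, by the very definition of $\hat A_n^1$, every vertex in $\bddi \hat A_n^1$ lies at $|\cdot|$-distance strictly greater than $(1+\delta) t_n$ from $A_{n-1}^1$. Consequently, the event $\{D(A_{n-1}^1, \bddi \hat A_n^1; \zeta^1) \leq t_n\}$ forces rate-$1$ first passage percolation, started from some vertex $y \in A_{n-1}^1$, to reach distance at least $(1+\delta) t_n$ from $y$ in time at most $t_n$, i.e., to advance \emph{faster} than its limit speed by a factor $1+\delta$. This is the upward-deviation tail of FPP, which is typically much better behaved than the downward tail.

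The plan is to take a union bound over the starting vertices $y \in A_{n-1}^1$ (of which there are at most a polynomial-in-$t_n$ number, since $|A_{n-1}^1|\leq c(\delta\alpha r)^d(1+\delta)^{(n-1)d}$) and, for each one, invoke the right-tail part of Proposition~\ref{pro:kesten}. Concretely, for fixed $y$, translation invariance gives
$$
   \PQ\lr{\inf_{x \in \bddi \hat A_n^1} D(y,x; \zeta^1) \leq t_n}
   \leq \PQ\lr{\inf_{x \in \bddi \ball{(1+\delta) t_n}} D(0,x; \zeta^1) \leq t_n}
   \leq \PQ\lr{S_{t_n}^{\delta}(\zeta^1)}.
$$
Since $\delta = (1-\lambda)/10$ is a fixed constant, for all $t_n$ large enough the hypothesis $\delta > c_1 t_n^{-1/(2d+4)}(\log t_n)^{1/(d+2)}$ of Proposition~\ref{pro:kesten} is automatically satisfied, and the proposition yields a stretched-exponential bound of the required shape after absorbing the polynomial prefactors into the exponent (possible because $t_n \geq t_1 = (1+\delta)^2 \delta \alpha r$ is large by the hypothesis $\alpha > (\lambda(1-\lambda))^{-c_1}$ in Proposition~\ref{pro:encapsulate}).

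The one point that deserves care is the exponent: Proposition~\ref{pro:kesten} as stated delivers $\exp(-c\, t_n^{(d+1)/(2d+4)})$, which is weaker than the claimed $\exp(-c\, t_n^{(2d+3)/(2d+4)})$. The upgrade comes from the fact that Kesten's joint bound on $S_t^\delta$ is driven by the slow (downward-deviation) direction, while the fast direction admits a sharper argument: any path realizing the event must have graph-length at least $(1+\delta)t_n/C'_\FPP$, its passage time is a sum of that many i.i.d.\ rate-$1$ exponentials, and an exponential Chebyshev bound combined with a path-counting estimate $(2d)^\ell$ over self-avoiding paths of length $\ell$, optimized in $\theta$, produces an exponent linear in $t_n$ up to logarithmic slack — comfortably stronger than $t_n^{(2d+3)/(2d+4)}$. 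I would therefore substitute this direct right-tail estimate for the invocation of $S_{t_n}^\delta$; this is the step I expect to require the most bookkeeping, since one must control the number of paths and the bias $\delta$ simultaneously, but it is entirely standard.

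Putting these pieces together — geometric separation $(1+\delta) t_n$, union bound over $y \in A_{n-1}^1$, and the sharp right-tail FPP estimate — yields the claim. No input from outside of $\hat A_n^1$ is used because any realizing path from $A_{n-1}^1$ to $\bddi \hat A_n^1$ lies entirely inside $\hat A_n^1$ by definition of the inner boundary, which will be important when the lemma is later combined with Lemmas~\ref{lem:eta2} and~\ref{lem:eta1} to establish the measurability of the event $F$ in Proposition~\ref{pro:encapsulate}.
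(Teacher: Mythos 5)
Your proof reproduces the paper's own argument up to and including the application of Proposition~\ref{pro:kesten}: the observation that every $y\in\bddi\hat A_n^1$ is at $|\cdot|$-distance greater than $(1+\delta)t_n$ from $A_{n-1}^1$, the union bound over the polynomially many starting vertices of $A_{n-1}^1$, the reduction to the event $S_{t_n}^\delta$, and the check that $\delta=(1-\lambda)/10$ satisfies the hypothesis of the proposition because $t_n$ is large. This is exactly what the paper does, and it yields
\[
   \PQ_{x,\ball{r}}\lr{D(A_{n-1}^1,\bddi \hat A_{n}^1;\zeta^1) \leq t_n}
   \;\leq\; c'\,(1+\delta)^{n-2}\alpha r\, t_n^{2d}\exp\lr{-c''\, t_n^{\frac{d+1}{2d+4}}},
\]
which, after absorbing the polynomial, gives the exponent $\frac{d+1}{2d+4}$, \emph{not} $\frac{2d+3}{2d+4}$.

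You are right to flag the discrepancy, but the conclusion you should draw is that the exponent $\frac{2d+3}{2d+4}$ in the statement of Lemma~\ref{lem:eta1b} is a typographical error for $\frac{d+1}{2d+4}$. Three pieces of evidence: (i) the paper's own one-line proof produces only $\frac{d+1}{2d+4}$; (ii) Lemmas~\ref{lem:eta2} and~\ref{lem:eta1} carry the exponent $\frac{d+1}{2d+4}$ and there is no reason for this lemma to be different, as all three come from the same invocation of Proposition~\ref{pro:kesten}; (iii) in the proof of Proposition~\ref{pro:encapsulate} the three lemmas are combined into the single bound $\PR(F^\compl)\leq \sum_{n\geq1}\exp(-c(\lambda t_n)^{\frac{d+1}{2d+4}})$, so only the weaker exponent is ever used.

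The ``upgrade'' you sketch does not work as stated. For the event in question you need to bound the probability that \emph{some} path of graph length at least $\ell_0\geq C_\FPP(1+\delta)t_n$ from $A_{n-1}^1$ has total passage time at most $t_n$. Chernoff gives, for a fixed path of length $\ell$, $\PR\bigl(\sum_{i=1}^\ell Z_i\leq t_n\bigr)\leq e^{\theta t_n}(1+\theta)^{-\ell}$, and the self-avoiding path count at most $(2d)^\ell$. For the series over $\ell$ to converge you are forced to take $\theta>2d-1$; but then $e^{\theta t_n}$ is of order $e^{(2d-1)t_n}$, and the savings $((2d)/(1+\theta))^{\ell_0}$ can only compensate if $\ell_0/t_n$ exceeds a constant of order $2d$ (an optimization shows one essentially needs $\ell_0\gtrsim 2de\,t_n$). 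Since $\ell_0$ is only of order $C_\FPP(1+\delta)t_n$ with $\delta=(1-\lambda)/10$ possibly tiny and $C_\FPP$ a fixed constant depending only on $d$, this is not guaranteed and the union bound is vacuous in the relevant range of $\ell$. A truly exponential (in $t_n$) lower-tail bound for exponential FPP at a fixed relative deviation $\delta$ does hold --- this is a theorem of Grimmett and Kesten --- but its proof requires a renormalization/decoupling argument rather than a bare path union bound, and in any case it is not needed here, since the paper only uses the stretched-exponential rate from Proposition~\ref{pro:kesten}.
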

\begin{proof}
   Recall that 
   \begin{equation}
      \inf_{x\in A_{n-1}^1,y\in\bddi\hat A_n^1} |x-y|
      \geq (1+\delta)t_n.
      \label{eq:appdist1b}
   \end{equation}
   Then, since the number of vertices in $A_{n-1}^1$ can be bounded above by $c' (1+\delta)^{n-2}\delta\alpha r$ for some constant $c'>0$, 
   applying Proposition~\ref{pro:kesten} we obtain
   $$
      \PQ_{x,\ball{r}}\lr{D(A_{n-1}^1,\bddi \hat A_{n}^1;\zeta^1) \leq t_n}
      \leq \sum_{x\in A_{n-1}^1}  \PQ\lr{S_{t_n}^\delta}
      \leq c' (1+\delta)^{n-2}\delta\alpha r t_n^{2d}\exp\lr{-c'' t_n^{\frac{d+1}{2d+4}}}.
   $$
   The lemma then follows since $\alpha> \left(\frac{1}{\lambda(1-\lambda)}\right)^{c_1}$ for some large enough $c_1$.
\end{proof}

\subsection{Completing the Proof of Proposition~\ref{pro:encapsulate}}
\begin{proof}[Proof of Proposition~\ref{pro:encapsulate}]
   For any $X\subset\mathbb{Z}^d$, let $\zeta^1|_X$ be a set of passage times to the edges of $\mathbb{Z}^d$ such that 
   they are equal to $\zeta^1$ for any edge whose both endpoints belong to $X$ and are equal to infinity everywhere else.
   For each integer $n$, define the events
   \begin{align*}
      E_n^{(1)} = \Big\{D(\ball{r},\bddo A_n^2;\zeta^2)\leq T_n\Big\}, \quad
      E_n^{(2)} = \Big\{\sup\nolimits_{x\in A_n^1} D\lr{A_{n-1}^1,x;\zeta^1|_{\hat A_n^1}}> t_n\Big\}\\
      \quad\text{and}\quad
      E_n = E_n^{(1)} \cup E_n^{(2)}.
   \end{align*}
   We define the event $F$ in the proposition by $F= \bigcap_{n=1}^N E_n^\compl$. 
   We also define $R=(1+\delta)^{N}\alpha r$ and $T=T_N$.
   By Lemmas~\ref{lem:eta2}, \ref{lem:eta1} and \ref{lem:eta1b}, we have 
   $$
      \PR\lr{F^\compl}
      \leq \sum_{n\geq 1} \exp\lr{-c (\lambda t_n)^\frac{d+1}{2d+4}}
      \leq 2\exp\lr{-c (\lambda t_1)^\frac{d+1}{2d+4}}
      \leq \exp\lr{-c' (\lambda(1-\lambda) \alpha r)^\frac{d+1}{2d+4}},
   $$
   where the last inequality follows since $\lambda t_1 = (1+\delta)^2\lambda \delta \alpha r$.
   This establishes the bound in the probability appearing in Proposition~\ref{pro:encapsulate}.
   
   Since $E_n^{(1)}$ is measurable with respect to $A_n^2$ and 
   $E_n^{(2)}$ is measurable with respect to $\hat A_n^1$, we have that $F$ is measurable with respect to the passage times inside 
   $$
      \bigcup_{n=1}^N \lr{\hat A_n^1 \cup A_n^2}
      \subset \ball{(1+\delta)^{N+2}\alpha r}
      = \ball{\lr{\frac{11-\lambda}{10}}^2(1+\delta)^{N}\alpha r}
      = \ball{\lr{\frac{11-\lambda}{10}}^2 R}.
   $$
   Note that $F$ is increasing with respect to $\zeta^2$ and decreasing with respect to $\zeta^1|_{\ball{\lr{\frac{11-\lambda}{10}}^2 R}}$.
   
   To conclude the proof of the proposition, it suffices to show that 
   $F$ implies that $\xi^1$ occupies all vertices in $\bigcup_{n=1}^N A_n^1$, since 
   $$
      A^1_N = \ball{R}\setminus \ball{\frac{R}{1+\delta}}
      = \ball{R}\setminus \ball{\frac{10R}{11-\lambda}}.
   $$
   We will use induction on $n$ 
   to establish a stronger result by showing that, for each $n$, given that $\xi^1(T_{n-1})\supset A_{n-1}^1$,
   the event $E_n^\compl$ implies that $\xi^1(T_{n})\supset A_{n}^1$.
   First, for $n=0$ we have from the initial condition that 
   $\xi^1(0) \supset A_0^1$. 
   Now assume that $\xi^1(T_{n-1})\supset A_{n-1}^1$. Since $E_n^{(1)}$ does not hold, we have that 
   \begin{align*}
      \xi^2(T_n)
      \subset A_n^2
      &= \ball{r+\lambda \lr{1-(1+\delta)^{-n}}(1+\delta)^{n+3}\alpha r}\\
      &\subset \ball{r+(1+\delta)^{n+3}\lambda \alpha r}\\
      &\subset \ball{r+(1+\delta)^{n-7}\alpha r}\\
      &\subset \ball{(1+\delta)^{n-6}\alpha r},
   \end{align*}
   where the second-to-last step follows since $1+\delta=\frac{11-\lambda}{10}$ and the function 
   $\lr{\frac{11-x}{10}}^{10}x\leq 1$ for all $x\in [0,1]$. The last step follows since $\alpha$ is large enough.
   Now we note that $\hat A_n^1$ does not intersect 
   \begin{align}
      \ball{(1+\delta)^{n-2}\alpha r-(1+\delta)^{n+4} \delta\alpha r}
      &= \ball{(1+\delta)^{n-2}\alpha r(1-(1+\delta)^6\delta)}\nonumber\\
      &\supset \ball{(1+\delta)^{n-6}\alpha r}
      \supset A_n^2.
      \label{eq:anhat}
   \end{align}
   Since $E_n^{(2)}$ does not happen, and $\xi^2(T_n)\cap \hat A_n^1=\emptyset$, 
   the passage times inside $\hat A_n^1$ guarantee that $\xi^1(T_n)\supset A_n^1$, concluding the proof.
\end{proof}

\subsection{Proof of Proposition~\ref{pro:encapsulate2}}\label{sec:encapsulate2}
\begin{proof}[Proof of Proposition~\ref{pro:encapsulate2}]
   The proof of Proposition~\ref{pro:encapsulate} goes by showing that, for each $n\geq 1$, given that $\xi^1$ occupies $A_{n-1}^1$, $\xi^1$ will occupy $A_{n}^1$ after time $t_n$, $\xi^2$ will be 
   confined to $A_n^2$, and these events are measurable with respect to $\hat A_n^1\cup A_n^2$.
   Moreover, we have by~\eqref{eq:anhat} that $A_n^2$ does not intersect $\hat A_n^1$, which guarantees that for each $n$ the events for $\xi^1$ and $\xi^2$ at the $n$th step are independent. 
   In the setting of Proposition~\ref{pro:encapsulate2}, the main difference is that, with the presence of the sets $\Pi_i$, some parts of $A_n^1,A_n^2,\hat A_n^1$ may intersect $\bigcup_i\Pi_i$.
%
%
   Below, when we refer to the properties of $\Pi_i$, we mean the enumerated properties \ref{it:pi1}--\ref{it:pi3} described for $\Pi_i$ right before Proposition~\ref{pro:encapsulate2}.
 
   First we consider the effect of the sets $\Pi_i$ for the spread of $\xi^2$.  
   Note that, for each $i$, if $\xi^2(0)$ does not intersect $\Pi_i$, then due to properties~\ref{it:pi1} and~\ref{it:pi3}, the spread of $\xi^2$ inside $\Pi_i$ is slower than that given by $\zeta^2$, just as in Proposition~\ref{pro:encapsulate}. 
   On the other hand, if $\xi^2(0)$ intersects some $\Pi_i$, then $\xi^2$ may benefit from the set of vertices in $\Pi_i$ that is occupied by type 2. 
   This decreases the distance between $\ball{r}$ and $A_n^2$ by at most $\gamma r$. 
   Moreover, to ensure that the event for $\xi^2$ at the $n$th step is measurable with respect to $A_n^2$, we will consider the event 
   that $\xi^2$ is confined to $A_n^2\setminus \bddi_{\gamma r} A_n^2$, where 
   $\bddi_{\gamma r} A_n^2$ stands for the vertices of $A_n^2$ within distance $\gamma r$ from $\bddi A_n^2$.
   Therefore,
   we define 
   $$
      E_n^{(1)} = \Big\{D(\ball{(1+\gamma)r},\bddo (A_n^2\setminus \bddi_{\gamma r} A_n^2);\zeta^2)\leq T_n\Big\}.
   $$
   For Proposition~\ref{pro:encapsulate}, the probability of the corresponding event is given in Lemma~\ref{lem:eta2}, and follows from inequality~\eqref{eq:appdist2}.
   Here,~\eqref{eq:appdist2} translates to
   \begin{align*}
      \inf_{x\in \ball{(1+\gamma)r},y\in \bddo (A_n^2\setminus \bddi_{\gamma r} A_n^2)} |x-y| 
      &\geq \lambda\left(1-(1+\delta)^{-n}\right)(1+\delta)^{n+3}\alpha r - 2\gamma r \\
      &\geq \lambda T_n\left(1+\delta - \frac{2\gamma}{\alpha}\right) \\
      &\geq (1+\delta/2)\lambda T_n,
   \end{align*}
   and Lemma~\ref{lem:eta2} follows by adjusting the constant $c$.
   
   Now we turn to the effect of the $\Pi_i$ on the spread of $\xi^1$.
   There are two aspects regarding the spread of $\xi^1$: the time to spread from $A_{n-1}^1$ to $A_n^1$ (handled in Lemma~\ref{lem:eta1}) and the measurability of this event (handled in Lemma~\ref{lem:eta1b}).
   Regarding Lemma~\ref{lem:eta1}, the crucial inequality is~\eqref{eq:appdist1}. 
   But since $A_{n-1}^1$ and $A_n^1$ may intersect $\bigcup_i\Pi_i$, to ensure that 
   $\xi^1$ can spread in the $n$th step, we need to change $E_n^{(2)}$ to 
   $$
      E_n^{(2)} = \Big\{\sup\nolimits_{x\in A_n^1(\Pi)} D\lr{A_{n-1}^1\setminus \bddi_{\gamma r}A_{n-1}^1,x;\zeta^1|_{\hat A_n^1}}> t_n\Big\},
   $$ 
   where $A_n^1(\Pi)$ is the set $A_n^1$ plus all sets $\Pi_i$ that intersect $A_n^1$; that is, 
   $A_n^1(\Pi)= A_n^1 \cup \left(\bigcup_{i\colon \Pi_i\cap A_n^1\neq \emptyset}\Pi_i \right)$.
   Then, using property~\ref{it:pi2},~\eqref{eq:appdist1} translates to 
   \begin{align*}
   \sup_{x\in A_n^1(\Pi)}\inf_{y\in A_{n-1}^1\setminus \bddi_{\gamma r}A_{n-1}^1} |x-y| 
      &\leq (1+\delta)^n\delta\alpha r+2\gamma r\\
      &\leq t_n\lr{1 - \frac{\delta}{1+\delta}+\frac{2\gamma}{\delta \alpha}}\\
      &\leq t_n\lr{1 - \frac{\delta}{2(1+\delta)}},
   \end{align*}
   and Lemma~\eqref{lem:eta1} follows by adjusting $c$.
   
   Regarding Lemma~\ref{lem:eta1b}, $\hat A_n^1$ will not need to be changed, and~\eqref{eq:appdist1b} is replaced with 
   $$
      \inf_{x\in A_{n-1}^1,y\in\bddi_{\gamma r}\hat A_n^1} |x-y|
      \geq (1+\delta)^{n+3}\delta\alpha r -\gamma r- \frac{2}{C_\FPP}
      \geq (1+\delta)t_n,
   $$ 
   where $\frac{2}{C_\FPP}$ accounts for the fact that the definition of $\hat A_n^2$ includes neighborhoods in $\mathbb{Z}^d$ and $\ball{2/C_\FPP}$ contains all vertices in $[-2,2]^d$. 
   Then Lemma~\ref{lem:eta1b} holds as it is.
   Finally, we just need to ensure that the events $E_n^{(1)}$ and $E_n^{(2)}$ are independent. In other words, that $\hat A_n^1$ and $A_n^2$ do not intersect. But this is true since their definition did not change; hence~\ref{eq:anhat}
   holds as it is. 
\end{proof}
\section{Standard properties of exponential random variables}\label{sec:exp}

Here we state some properties of exponential random variables that we will use in the paper.
\begin{lemma}[Random sum]\label{lem:rsum}
   Fix any $q\in(0,1)$. Let $L$ be a geometric random variable of success probability $q$. 
   Let $X_1,X_2,\ldots$ be an i.i.d.\ sequence of exponential random variables of rate $1$.
   Hence,
   $$
      \sum_{i=1}^L X_i \text{ is an exponential random variable of rate $q$}.
   $$
\end{lemma}

\begin{lemma}[Scaling and minimum]\label{lem:minscale}
   Let $X$ be an exponential random variable of rate $\theta$.
   Then, for any $M>0$, we have 
   $$
      \frac{X}{M} \text{ is an exponential random variable of rate $M\theta$}.
   $$
   Moreover, for integer $M$, $\frac{X}{M}$ has the same distribution of the minimum of $M$ independent, exponential random variables of rate $\theta$. 
\end{lemma}


In the lemma below we show that a collection of exponential random variables $Z_1,Z_2,\ldots,Z_{k}$ can be sampled by first sampling the minimum value among all of them, which is the variable $Z_I$ whose value is $W$, and then 
using the memoryless property of exponential random variables to say that the other ones are equal to $W$ plus an exponential random variable of the same rate. 
\begin{lemma}[Decomposition on the minimum]\label{lem:decomp}
   Fix any integers $k\geq 1$. Let $X_i$, $i=1,2,\ldots, k$, be independent exponential random variables of rate $\theta_i$. 
   Let $I$ be a random variable in $\{1,2,\ldots,k\}$ which has value $i$ with probability $\frac{\theta_i}{\sum_{j=1}^k\theta_j}$.
   Let $W$ be an independent, exponential random variable of rate $\sum_{j=1}^k\theta_j$.
   Thus, if we set $Z_i$, $i=1,2,\ldots,j$, as 
   $$
      Z_i = W + \ind{I\neq i} X_i,
   $$
   we obtain that the $Z_i$ are independent exponenential random variables of rate $\theta_i$.  
\end{lemma}

Note that the above lemma can be iterated. That is, after we see that $Z_I=W$ is the minimum among the $Z_i$, then the value of $Z_i$ for $i\neq I$ can be sampled by 
first sampling the minimum among the $X_i$ with $i\neq I$. Thus we obtain new random variables $I'$ and $W'$ so that $X_{I'}=W'$ and $Z_{I'}=W+W'$, while the other values of $Z_i$ with $i\neq I,I'$ are equal to $W+W'$ 
plus an independent exponential random variable of the same rate. 
Then, after having sampled the values of $Z_I$ and $Z_{I'}$, we can iterate the above procedure with the $Z_i$ that were not yet sampled, 
for $k-2$ iterations, until all the $Z_i$'s have been obtained.

\bibliographystyle{plain}
\bibliography{mdla}

\end{document}